\newcommand\sign{\mathop{\rm sign}}
\newtheorem{theorem}{Theorem}[section]
\newtheorem{remark}{Remark}[section]
\begin{document}


\title{A Floating Cylinder on An Unbounded Bath} 



\author{Hanzhe Chen}
\email{h248chen@uwaterloo.ca}
\affiliation{Department of Applied Mathematics, University of Waterloo}


\author{David Siegel}
\email{dsiegel@uwaterloo.ca}
\affiliation{Department of Applied Mathematics, University of Waterloo}

\date{\today}

\begin{abstract}
In this paper, we reconsider a circular cylinder horizontally floating on an unbounded reservoir in a gravitational field directed downwards, which was studied by Bhatnargar and Finn\cite{MR2259294} in 2006. We follow their approach but with some modifications. We establish the relation between the total energy $E_T$ relative to the undisturbed state and the total force $F_T$, that is, $F_T = -\frac{dE_T}{dh}$, where $h$ is the height of the center of the cylinder relative to the undisturbed fluid level. There is a monotone relation between $h$ and the wetting angle $\phi_0$. We study the number of equilibria, the floating configurations and their stability for all parameter values. We find that the system admits at most two equilibrium points for arbitrary contact angle $\gamma$, the one with smaller $\phi_0$ is  stable and the one with larger $\phi_0$ is unstable. The initial model has a limitation that the fluid interfaces may intersect. We show that the stable equilibrium point never lies in the intersection region, while the unstable equilibrium point may lie in the intersection region. 
\end{abstract}

\pacs{}

\maketitle 


\section{Introduction}

This study of a circular cylinder horizontally floating on an unbounded bath is motivated by the ground breaking paper of Bhatnagar and Finn \cite{MR2259294}. They considered equilibrium configurations and their stability from both the energy and the total force points of view. They gave a surprising example with two distinct equilibrium configurations. We are interested in investigating the number of equilibrium configuration and their stability for all values of the parameters. We will follow Bhatnagar and Finn's approach but with some modifications. In Sec.~\ref{fatea}, we consider the total energy relative to the undisturbed fluid $E_T$ and establish the relation between it, and the total force $F_T$:
\begin{equation}
-\frac{dE_T}{d h} = F_T,
\label{detdh}
\end{equation}
where $h$ is the height of center. This relation provides a convenient way to analyze the stability of equilibrium configurations based only on $F_T$. In Sec.~\ref{stabe}, we study the behavior of the total force curve and conclude there are at most two equilibrium configurations. The initial model has a limitation due to the possible intersection of fluid interfaces that is not physically realizable. In Sec.~\ref{noe}, we discuss the intersection condition of the fluid interfaces. Taking this into consideration, we determine the number of equilibria and their stability in the physically realizable cases for typical contact angles. 

The validity of ``Young's diagram'' has been widely discussed in recent literature. Bhatnagar and Finn\cite{MR2259294} assert that the surface tension force $F_\sigma$ acts only along the fluid interface. The relation (\ref{detdh}) we obtained implicitly supports their assertion. Related discussions of Young's diagram and Finn's counterexample can also be found in (Refs.~\onlinecite{MR2259294,youngpara,MR2964743,youngdiscussion1, MR2259293,youngdiscussion2}). 

Archimedes' principle is not in general correct when the air/liquid interface is not flat due to the presence of surface tension. McCuan and Treinen study Archimedes' principle and capillarity in Ref.~\onlinecite{MR3095116}. In Appendix \ref{Apx2}, we discuss the validity of Archimedes' principle for the floating cylinder problem in presence of the surface tension.

\subsection{Fluid Interface and Configuration}
Suppose an infinite reservoir of fluid has its interface with the air at the zero level. Introduce an infinite circular cylinder of radius $a$ floating horizontally on the infinite reservoir and assume the free fluid level is unchanged. 
If we admit the presence of surface tension, the fluid will be lifted up or pushed down to the fluid height $u$. Consider the fluid interface on the right. The inclination angle $\psi$ is measured counterclockwise from the positive horizontal direction. When $u>0$, $\psi$ ranges from the top ($\psi = -\pi$) to the free fluid level $\psi = 0$. Oppositely, $\psi$ ranges from $0$ to $\pi$ when $u<0$ (see Fig.~\ref{inclination}).

\begin{figure*}[h]
     \centering
     \subfloat[]{\includegraphics[scale = 0.3]{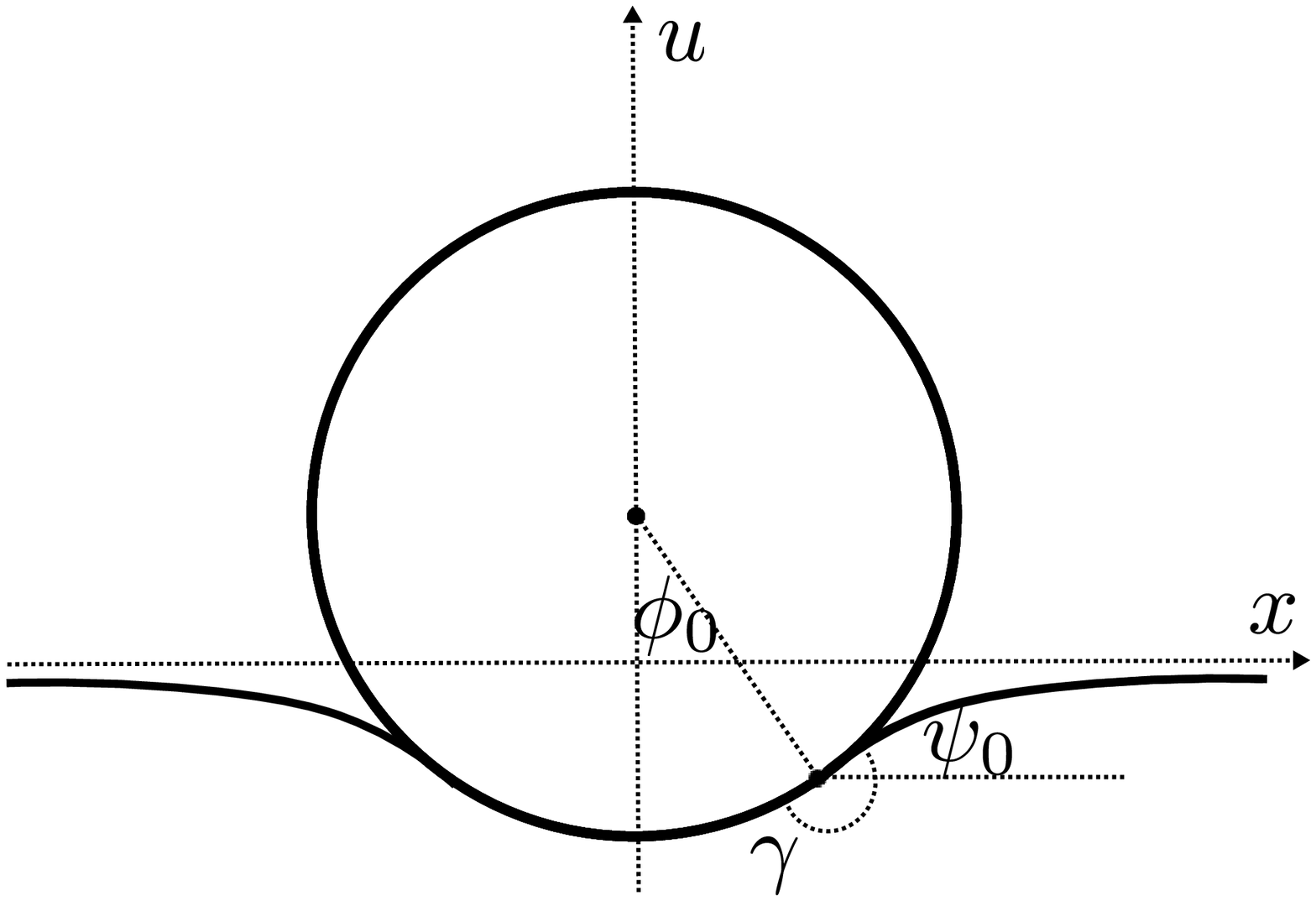}\label{configuration}}
     \hspace{1cm}
     \subfloat[]{\includegraphics[scale = 0.3]{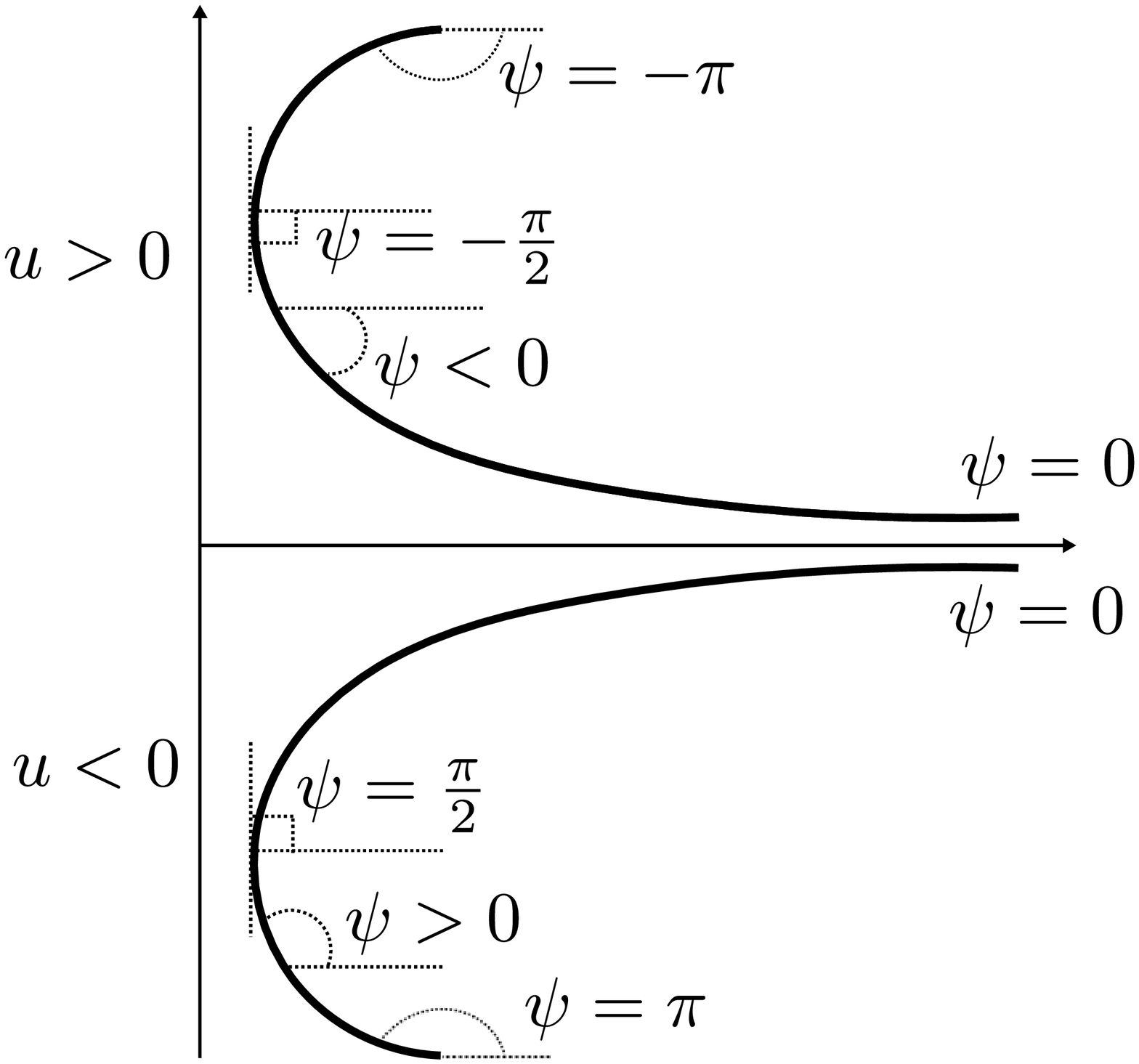}\label{inclination}}
     \caption{(a) The cross-sectional configuration of the horizontal cylinder lying on the liquid and (b) the measurement of the inclination angle $\psi$ for both positive and negative fluid heights.}
\end{figure*}

Assume that all the fluid, the air and the cylinder are homogeneous. Considering a unit length along the cylinder, our ideal model turns into a two-dimensional problem. Viewing the cross section in Fig.~\ref{configuration}, we set the center of the cylinder on the vertical axis. At the contact point between the fluid and the cylinder, we define the contact angle $\gamma$, the inclination angle $\psi_0$ and the wetting angle $\phi_0$. Immediately, we obtain the geometric constraint:
\begin{equation}
\psi_0=\phi_0+\gamma-\pi.
\label{gconstraint}
\end{equation}

\subsection{The Capillary Equation}\label{thecapequation}
Since the configuration is symmetric about the vertical axis, it suffices to look at the fluid interface on positive side, $x>0$. Geometrically, the curvature of the fluid interface $\frac{d\psi}{ds}$ is proportional to the fluid height $u$ with constant $\kappa$, that is the one-dimensional capillary equation
\begin{equation}
\frac{d\psi}{ds} = \kappa u,
\label{capeqn}
\end{equation}
where $s$ is the arc length, $\kappa = \frac{\rho g}{\sigma}$ is known as the capillary constant,  $\sigma$ is surface tension along the fluid interface, $\rho$ is the density difference of the fluid and the air, and $g$ is the acceleration due to gravity. 

We assume the fluid height $u$ goes to zero asymptotically as $\psi \rightarrow 0$. That is
\begin{equation}
\lim_{\psi \rightarrow 0} u(\psi) = 0.
\label{bc}
\end{equation}

The capillary Eq.~(\ref{capeqn}) with the boundary condition (\ref{bc}) admits a unique solution $u(\psi)$ and $x(\psi)$ up to translation. This solution is classically known and can be traced back to Laplace and Euler. 

Finn and Bhatnagar\cite{MR2259294} have given the solution in terms of $\psi$. We modify the solution to treat $u>0$ and $u<0$ simultaneously and to be consistent with our notation.  
\begin{eqnarray}
u(\psi) &=& -\frac{2}{\sqrt{\kappa}} \sin\frac{\psi}{2},\label{solnu}\\
x(\psi) &=& -\frac{1}{\sqrt{\kappa}}\left(2\cos\frac{\psi}{2}+\ln\left\vert\tan\frac{\psi}{4}\right\vert-2\cos\frac{\psi_0}{2}-\ln\left\vert\tan\frac{\psi_0}{4}\right\vert\right)+a\sin\phi_0.\label{solnx}
\end{eqnarray}

At the contact point, the horizontal distance is $x_0 = a\sin\phi_0$, the fluid height at $x_0$ is 
\begin{equation}
u_0=u(\psi_0)=-\frac{2}{\sqrt{\kappa}}\sin{\frac{\psi_0}{2}}. 
\end{equation}

The height of the center $h = a\cos\phi_0+u_0$, therefore
\begin{equation}
h = a\cos\phi_0-\frac{2}{\sqrt{\kappa}}\sin\frac{\psi_0}{2}.
\end{equation}

\begin{figure}[h]
	\includegraphics[scale = 0.3]{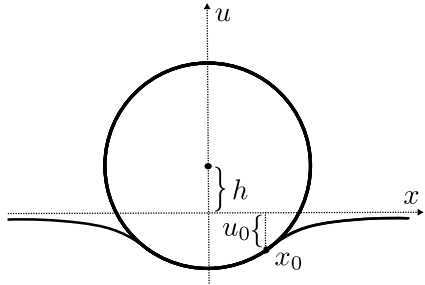}
	\caption{The coordinates of the contact point $x_0$ and $u_0$, and the height of the center $h$.}
\end{figure}

\section{Force Analysis and Total Energy Approach}\label{fatea}
\subsection{Derivation of the Total Energy $E_T$}\label{detoen}
In this section, following the method of Gauss \cite{MR816345}, we determine all the potential energies of the floating cylinder system. Because of the unboundedness of the fluids, we consider the relative energy to avoid the confusion of infinite energy. The types of energies will be expressed explicitly in terms of the inclination angle at the contact point $\psi_0$ and the wetting angle $\phi_0$. 

We have the following four types of energy:
\begin{enumerate}
\item The body potential energy which is relative to the free fluid level can be expressed as $E_G = mgh$, where $h = a\cos\phi_0-\frac{2}{\sqrt{\kappa}}\sin\frac{\psi_0}{2}$. $E_G$ is a function in terms of $\psi_0$ and $\phi_0$:  
\begin{equation}
E_G(\psi_0, \phi_0) = mg\left(a\cos\phi_0-\frac{2}{\sqrt{\kappa}}\sin\frac{\psi_0}{2}\right).
\end{equation}

\item The wetting energy $E_W= -\beta\sigma\left\vert\Sigma\right\vert$, where the wetting area per unit length is denoted by $\left\vert\Sigma\right\vert=2a\phi_0$. With the relative adhesion coefficient $\beta$, $E_W$ only depends on $\phi_0$:
\begin{equation}
E_W(\phi_0) = -2\beta\sigma a \phi_0,
\end{equation}
where $\beta$ can be shown to be equal to $\cos\gamma$, see Ref.~\onlinecite{hanzhe,MR816345}.

\item Surface tension can be interpreted as energy per area. To avoid infinite energy, we define the surface energy $E_{\sigma}$, a relative energy compared with the surface energy of undisturbed fluid surface (see Fig.~\ref{surfacetension}). When the fluid interfaces are graphs, it has the form
\begin{equation}
E_{\sigma} = 2\sigma\lim_{x_1 \rightarrow \infty}\left[\int_{x_0}^{x_1}\sqrt{1+\left(\frac{du}{dx}\right)^2}\,dx - \int_{0}^{x_1}\,dx\right].
\end{equation}

\begin{figure}[h]
	\includegraphics[scale = 0.3]{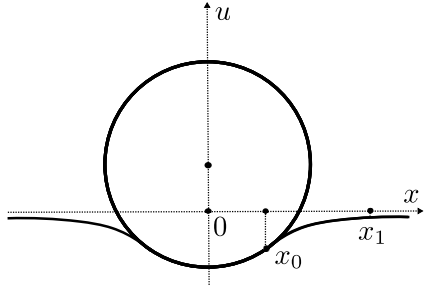}
	\caption{Computation of $E_{\sigma}$ when the fluid interface is a graph.}
	\label{surfacetension}
\end{figure}

The fluid interface may also be a non-graph. The details of computing $E_\sigma$ in both cases are in Appendix~\ref{Apx1}. $E_\sigma$ is shown below: 
\begin{equation}
E_{\sigma}(\psi_0, \phi_0) = \frac{4\sigma}{\sqrt{\kappa}}(1-\cos\frac{\psi_0}{2}) -2\sigma a \sin\phi_0.
\end{equation}

\item $E_F$ is the potential energy of fluids which are lifted or displaced comparing to the free fluid level. The case when the fluid interface and the cross section of the wetted region are both graphs is shown in Fig.~\ref{fluidpo}.

\begin{figure}[h]
	\includegraphics[scale = 0.3]{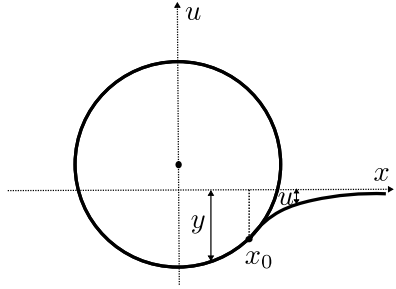}
	\caption{Computation of $E_F$.}
	\label{fluidpo}
\end{figure}

It is helpful to break $E_F$ into two parts $E_{F1}$ and $E_{F2}$,
\begin{equation}
E_F = \underbrace{2\rho g \int_{0}^{x_0} \frac{y^2}{2}\,dx}_{E_{F1}}+ \underbrace{2 \rho g\int_{x_0}^{\infty}\frac{u^2}{2}\,dx}_{E_{F2}},
\label{EF1EF2}
\end{equation}
where $y$ is the vertical height from the free fluid level to the bottom of the cylinder (see Fig.~\ref{fluidpo}). For computation details, see Appendix~\ref{Apx1}. Thus, 
\begin{eqnarray}
E_F(\psi_0,\phi_0) &=& E_{F_1}+E_{F_2} \nonumber\\
&=& -\frac{4\sigma}{3\sqrt{\kappa}}\left(1-2\cos\frac{\psi_0}{2}+\cos\frac{\psi_0}{2}\cos\psi_0\right) \nonumber +\frac{1}{12}\rho ga^3\sin3\phi_0\\
&& -\rho ga^3\phi_0\cos\phi_0+\frac{3}{4}\rho g a^3\sin\phi_0-a^2\sqrt{\sigma \rho g}\sin\frac{\psi_0}{2}\sin2\phi_0 \nonumber\\
&& +2a^2\sqrt{\sigma \rho g}\phi_0\sin\frac{\psi_0}{2}+4\sigma a \sin^2\frac{\psi_0}{2}\sin\phi_0.
\label{toeqn}
\end{eqnarray}
The same expression of $E_F$ holds for the general case. 
\end{enumerate}

The total energy $E_T$ can be expressed of the sum of the above four energies. 
\begin{equation}
E_T = E_G + E_W + E_{\sigma}+E_F.
\end{equation}

The full expression of $E_T(\psi_0,\phi_0)$ is
\begin{eqnarray}
E_T(\psi_0,\phi_0) &=& mg\left(a\cos\phi_0-\frac{2}{\sqrt{\kappa}}\sin\frac{\psi_0}{2}\right)-2\beta \sigma a \phi_0+\frac{4\sigma}{\sqrt{\kappa}}\left(1-\cos\frac{\psi_0}{2}\right)-2\sigma a \sin\phi_0\nonumber\\
&& -\frac{4\sigma}{3\sqrt{\kappa}}\left(1-2\cos\frac{\psi_0}{2}+\cos\frac{\psi_0}{2}\cos\psi_0\right)+\frac{1}{12}\rho ga^3\sin3\phi_0 -\rho ga^3\phi_0\cos\phi_0\nonumber \\
&& +\frac{3}{4}\rho g a^3\sin\phi_0 -a^2\sqrt{\sigma \rho g}\sin\frac{\psi_0}{2}\sin2\phi_0+2a^2\sqrt{\sigma \rho g}\phi_0\sin\frac{\psi_0}{2}\nonumber \\
&& +4\sigma a\sin^2\frac{\psi_0}{2}\sin\phi_0.
\end{eqnarray}

With $\kappa = \frac{\rho g}{\sigma}$, $\beta = \cos\gamma$ and the geometric constraint $\psi_0 = \phi_0+\gamma -\pi$, after some calculation, the total energy $E_T(\psi_0, \phi_0)$ can be converted to
\begin{eqnarray}
E_T(\phi_0)&=& mg\left[a\cos\phi_0+2\sqrt{\frac{\sigma}{\rho g}}\cos\left(\frac{\phi_0+\gamma}{2}\right)\right]-2 \sigma a\phi_0\cos\gamma+\frac{8}{3}\sigma \sqrt{\frac{\sigma}{\rho g}}\left[1-\sin^3\left(\frac{\phi_0+\gamma}{2}\right)\right] \nonumber\\
&&+2\sigma a \sin\phi_0 \cos\left(\phi_0+\gamma\right)+\frac{1}{12}\rho ga^3\sin3\phi_0-\rho ga^3\phi_0\cos\phi_0+\frac{3}{4}\rho g a^3\sin\phi_0 \nonumber \\
&&+a^2\sqrt{\sigma \rho g}\cos\left(\frac{\phi_0+\gamma}{2}\right)\sin2\phi_0-2a^2\sqrt{\sigma \rho g}\phi_0\cos\left(\frac{\phi_0+\gamma}{2}\right).
\label{totalenergy}
\end{eqnarray}

\subsection{Analysis of the Forces}\label{analysisforces}

By symmetry, the surface tension forces in the horizontal direction cancel so that the net force in the horizontal direction is zero. Thus we only need to consider forces in the vertical direction. Bhatnagar and Finn\cite{MR2259294}
 give an analysis of the forces. We suppose upward is the positive direction and modify the expression of the forces as follows.
 
\begin{figure}[h]
	\includegraphics[scale = 0.3]{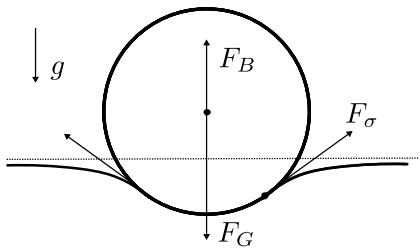}
	\caption{Gravitational, buoyant and surface tension forces.}
	\label{toforce}
\end{figure}

\begin{enumerate}

\item The gravitational force $F_G$ is caused by the downward pointed gravitational field $g$ and the mass of a unit length $m$. $F_G$ can be expressed as
\begin{equation}
F_G = -mg.
\end{equation} 

\item The buoyant force $F_B$ arises from the pressure of fluid acting on the floating object (see Fig.~\ref{buoyantforce}). With the outer unit normal of the cylinder $\hat{n}_c$ and the unit vertical upward pointing vector $\hat{k}$, the buoyant force has the form:
\begin{equation}
F_B = \hat{k} \cdot \int_{\vert\Sigma\vert} \vec{F}\,ds,
\end{equation}
where the centripetal component pressure $\vec{F} = \rho g y \hat{n}_c$ and $\vert\Sigma\vert$ denotes the wetted region.
  
\begin{figure}[h]
	\includegraphics[scale = 0.3]{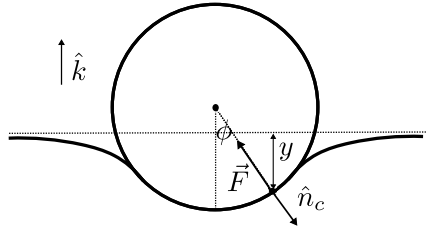}
	\caption{Computation of buoyant force.}
	\label{buoyantforce}
\end{figure}

$F_B$ can be calculated by integrating with respect to $\phi$ instead of $s$. As a result,\begin{equation}
F_B(\phi_0) = -4a\sqrt{\sigma \rho g}\cos\left(\frac{\phi_0+\gamma}{2}\right)\sin\phi_0-\frac{1}{2}\rho g a^2\sin2\phi_0+\rho g a^2\phi_0.
\end{equation}

With no surface tension, the divergence theorem leads to Archimedes' principle. But Archimedes' principle is not generally correct when the surface tension is present (see Appendix~\ref{Apx2}).

\item The surface tension force:

in 1805, Thomas Young\cite{thomasyoung} derived the formula to determine the contact angle $\gamma$ in terms of three surface tensions. Fig.~\ref{youngdia1} is known as ``Young's diagram''. Balancing the forces tangential to the solid gives
\begin{equation}
\cos \gamma = \frac{\sigma_1-\sigma_2}{\sigma},
\end{equation}
where $\sigma$ is the air/liquid surface tension, $\sigma_1$ and $\sigma_2$ are the air/solid and the liquid/solid surface tension, respectively. 

\begin{figure*}[h]
     \centering
     \subfloat[]{\includegraphics[scale = 0.3]{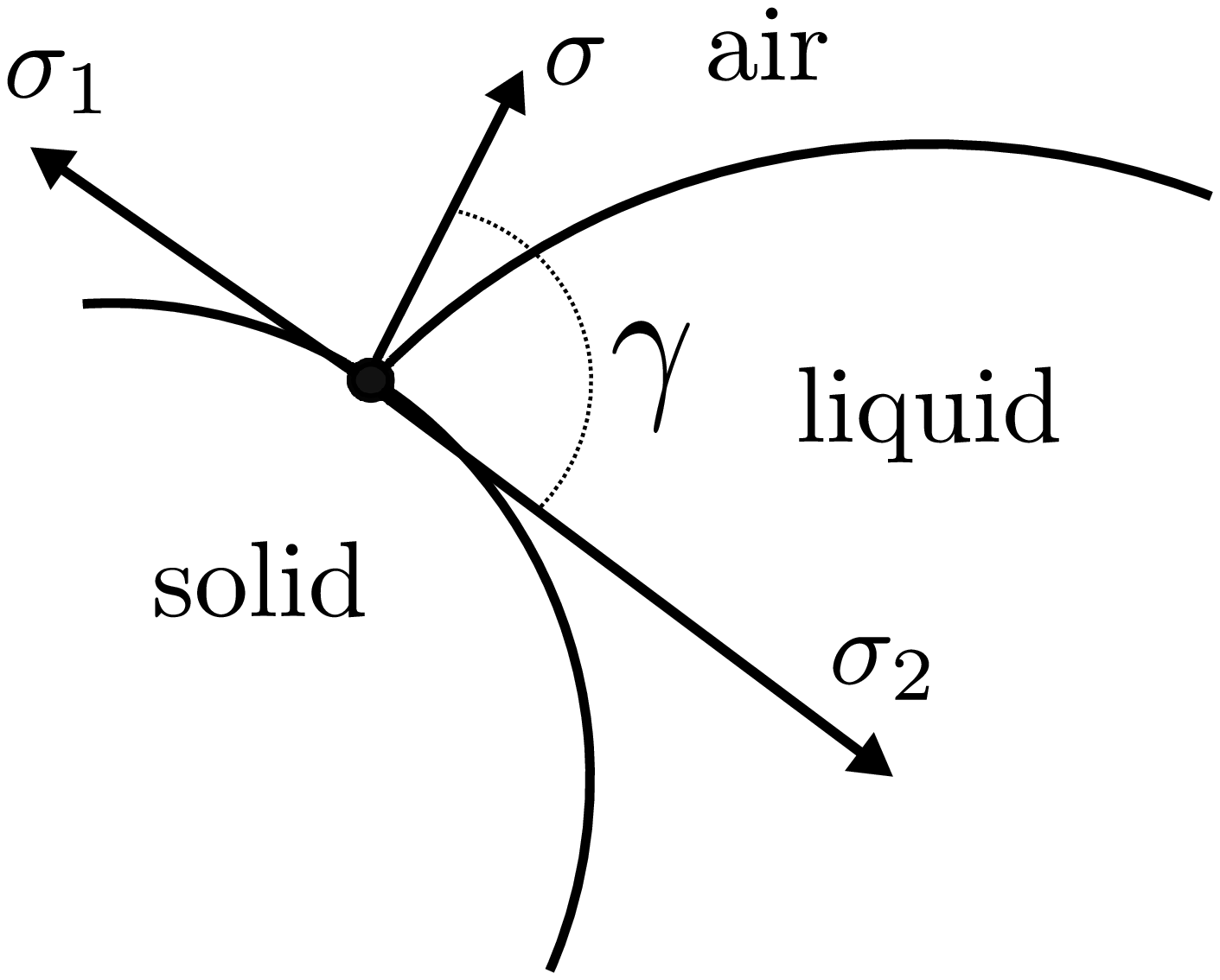}\label{youngdia1}}
     \hspace{2cm}
     \subfloat[]{\includegraphics[scale =0.3]{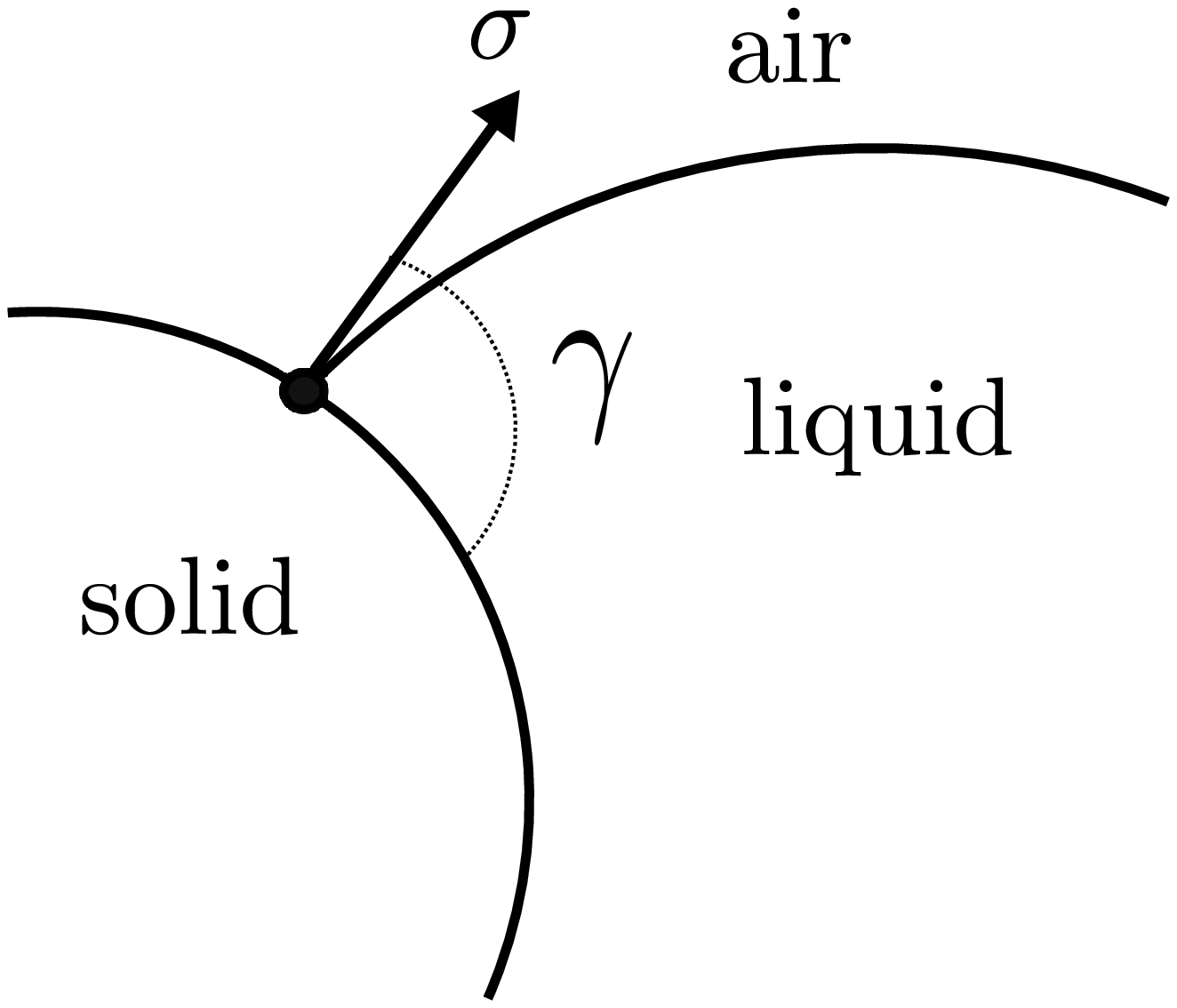}\label{youngdia2}}
     \caption{(a) Young's diagram and (b) its correction.}
\end{figure*}

The discussion of Young's diagram has gone on for centuries. Recently, Finn\cite{youngpara} gave a counterexample to show the incorrectness of Young's diagram. Instead of applying Young's diagram, we agree with Gifford and Scriven\cite{attraction}, Finn\cite{youngpara}, Bhatnagar and Finn\cite{MR2259294} that the surface tension acts only along the fluid interface. 

In our case, the vertical component of the surface tension is
\begin{equation}
F_{\sigma}(\phi_0) = -2\sigma\sin(\phi_0+\gamma).
\end{equation}

\end{enumerate}

Therefore, the full expression of $F_T(\phi_0)$ is 
\begin{eqnarray}
F_T(\phi_0) &=& F_G + F_\sigma + F_B \nonumber \\
&=& -mg-2\sigma \sin(\phi_0+\gamma)-4a\sqrt{\sigma \rho g}\cos\left(\frac{\phi_0+\gamma}{2}\right)\sin\phi_0 \nonumber \\
&& -\frac{1}{2}\rho g a^2\sin2\phi_0+\rho g a^2\phi_0.
\label{toforce}
\end{eqnarray}

\subsection{Relation between the Total Energy and the Total Force}\label{2drelation}

As minimizing the total energy $E_T(\phi_0)$ in Eq.~(\ref{totalenergy}) is laborious, we'll introduce a more convenient approach. Firstly, we observe the one-to-one correspondence between $h(\phi_0)$ and $\phi_0$, since $\frac{dh}{d\phi_0}<0$ on $\phi_0 \in \left[0,\pi\right]$ except $\phi_0 =\gamma = 0$ and $\phi_0 = \gamma = \pi$ cases (they are not physically realizable, see details in Sec~\ref{intersectioncase}).

One main result is the relation between $E_T$ and $F_T$, which follows by the chain rule: 
\begin{equation}
-\frac{dE_T}{d h} = -\frac{dE_T}{d \phi_0}\frac{d\phi_0}{d h} = F_T.
\label{cha2detdh}
\end{equation}

For details, see Appendix~\ref{Apxc}. The relation in (\ref{cha2detdh}) leads to the following equivalences. Since $\frac{d\phi_0}{d h} <0$ (except $\phi_0 =\gamma = 0$ and $\phi_0 = \gamma = \pi$), $\frac{dE_T}{d\phi_0}$ and $F_T$ have the same sign, that is
\begin{equation}
\sign\left(\frac{dE_T}{d\phi_0}\right)=\sign\left(F_T\right). 
\end{equation}

Assume that $\bar{\phi}_0 \in \left(0, \pi\right)$ is the critical point for $E_T\left(\phi_0\right)$, then
\begin{equation}
\frac{dE_T}{d\phi_0}\left(\bar{\phi}_0\right) = 0 \quad \Leftrightarrow \quad F_T\left(\bar{\phi}_0\right) = 0.
\label{equi}
\end{equation}
Thus the critical point $\bar{\phi}_0$ for $E_T\left(\phi_0\right)$ is equivalent to the force balance point $F_T\left(\bar{\phi}_0\right)=0$. We rearrange Eq.~(\ref{cha2detdh}) and differentiate with respect to $\phi_0$, then
\begin{equation}
-\frac{d^2 E_T}{d\phi_0^2} = \frac{dF_T}{d\phi_0}\frac{dh}{d\phi_0}+F_T\frac{d^2h}{d\phi_0^2}.
\label{2nddET}
\end{equation}

If we evaluate at $\bar{\phi}_0$, we have the following sign equivalence from Eq.~(\ref{2nddET}) 
\begin{equation}
\sign\left(\frac{d^2 E_T}{d \phi_0^2}\left(\bar{\phi_0}\right)\right)= \sign \left(\frac{dF_T}{d\phi_0}\left(\bar{\phi_0}\right)\right).
\end{equation}

Thus $\bar{\phi}_0$ is a local minimum if $\sign\left(\frac{d^2 E_T}{d \phi_0^2}\left(\bar{\phi_0}\right)\right) >0$ and $\bar{\phi}_0$ is a local maximum if $\sign\left(\frac{d^2 E_T}{d \phi_0^2}\left(\bar{\phi}_0\right)\right) <0$, equivalently,  
\begin{subequations}
\begin{equation}
\frac{dF_T}{d\phi_0}\left(\bar{\phi}_0\right) > 0 \quad \Rightarrow \quad \bar{\phi}_0 \text{ is locally stable}, \label{stableeq}
\end{equation}
\begin{equation}
\frac{dF_T}{d\phi_0}\left(\bar{\phi}_0\right) < 0 \quad \Rightarrow \quad \bar{\phi}_0 \text{ is locally unstable}. \label{unstableeq}
\end{equation}
\end{subequations}

With the equivalences above, we will focus on $F_T$ instead of $E_T$ in minimizing the total energy. The next stage is to find the force balance point. Two techniques, non-dimensionalization and Fourier decomposition, will be introduced. 

\begin{remark}\label{BFexample}
Bhatnagar and Finn\cite{MR2259294} gave the first example where a floating cylinder admits two equilibrium positions (we label the equilibria $\bar{\phi}_{01}<\bar{\phi}_{02}$). With parameters $\left\{g = 980\textrm{cm}/s^2, m = 1.2\textrm{g}, \rho = 1 \textrm{g}/\textrm{cm}^2, \sigma = 72 \textrm{dyn}/\textrm{cm}, \gamma = \frac{\pi}{2}, a = \frac{1}{\sqrt{\pi}} \textrm{cm}\right\}$, they assert $\bar{\phi}_{01}$ is unstable and $\bar{\phi}_{02}$ is stable. Here, we correct their stability assertion based on Eqs.~(\ref{stableeq}) and (\ref{unstableeq}). The smaller equilibrium point $\bar{\phi}_{01}$ is stable and the larger equilibrium point $\bar{\phi}_{02}$ is unstable. 
\end{remark}

\subsection{Two Independent Non-dimensional Parameters}
Bhatnagar and Finn\cite{MR2259294} introduced two dimensionless parameters:
\begin{equation}
\mathcal{A} = \frac{m}{a^2 \rho} \quad  \textrm{and} \quad \mathcal{B} = \frac{\rho g}{\sigma}a^2,
\end{equation}
where $\mathcal{A}$ also has the form $\mathcal{A}=\pi\frac{\rho_m}{\rho}$, where $\rho_m$ is the density difference between the cylinder and the air. $\mathcal{B}$ is known as the Bond number, which is the ratio of gravitational to surface tension forces. It'll be convenient to introduce $\mathcal{C} = \sqrt{\mathcal{B}}=\sqrt{\kappa} a$. The equation of the total force $F_T$ in (\ref{toforce}) can be expressed as
\begin{equation}
F_T(\phi_0) = \sigma\bigg[-\mathcal{A}\mathcal{C}^2-2\sin\left(\phi_0+\gamma\right)-4\mathcal{C}\cos\left(\frac{\phi_0+\gamma}{2}\right)\sin\phi_0-\frac{1}{2}\mathcal{C}^2\sin2\phi_0+\mathcal{C}^2\phi_0 \bigg].
\label{toforcem1}
\end{equation}

If we define a characteristic force as $F_c = 1\sigma$, where $1$ is a unit length of the horizontal cylinder, we have the dimensionless form of the total force  $\hat{F}_T$:
\begin{equation}
\hat{F}_T(\phi_0) = -\mathcal{A}\mathcal{C}^2-2\sin\left(\phi_0+\gamma\right)-4\mathcal{C}\cos\left(\frac{\phi_0+\gamma}{2}\right)\sin\phi_0-\frac{1}{2}\mathcal{C}^2\sin2\phi_0+\mathcal{C}^2\phi_0.
\label{toforcem2}
\end{equation}

\subsection{Trigonometric Series}

We write $\hat{F}_T(\phi_0)=\bar{F}_T(\phi_0)-\mathcal{A}\mathcal{C}^2+\mathcal{C}^2\phi_0$. The total force $\hat{F}_T$ in (\ref{toforcem2}) is mainly comprised of trigonometric functions sine and cosine. 
A Fourier decomposition can be applied and $\bar{F}_T$ can be written as the trigonometric series in terms of 
\begin{equation}
\left\{\sin\frac{\phi_0}{2}, \cos\frac{\phi_0}{2}, \sin\phi_0, \cos\phi_0,\sin\frac{3\phi_0}{2}, \cos\frac{3\phi_0}{2}, \sin2\phi_0, \cos2\phi_0\right\}.
\end{equation}

The projection formulas give the expression of the coefficients. 
\begin{subequations}
\begin{equation}
a_n = \frac{1}{2\pi}\int_0^{4\pi} \bar{F}_T(\phi_0)\cos\left(\frac{n\phi_0}{2}\right) \,d\phi_0,\quad  n\in \{1,2,3,4\},
\end{equation}
\begin{equation}
b_n = \frac{1}{2\pi}\int_0^{4\pi} \bar{F}_T(\phi_0)\sin\left(\frac{n\phi_0}{2}\right) \,d\phi_0,\quad n\in \{1,2,3,4\},
\end{equation}
\end{subequations}
where $a_n$ is the coefficient of $\cos\left(\frac{n\phi_0}{2}\right)$ and $b_n$ is the coefficient of $\sin\left(\frac{n\phi_0}{2}\right)$.

The total force equation $\hat{F}_T$ in (\ref{toforcem2}) can be transformed to the following
\begin{eqnarray}
\hat{F}_T(\phi_0) &=& -{\mathcal{A}}{\mathcal{C}}^2-2\mathcal{C}\cos\frac{\gamma}{2}\sin\frac{\phi_0}{2}+2\mathcal{C}\sin\frac{\gamma}{2}\cos\frac{\phi_0}{2}- 2\cos\gamma\sin\phi_0 \nonumber \\
&&-2\sin\gamma\cos\phi_0-2\mathcal{C}\cos\frac{\gamma}{2}\sin\frac{3\phi_0}{2}-2\mathcal{C}\sin\frac{\gamma}{2}\cos\frac{3\phi_0}{2} \nonumber \\
&&-\frac{1}{2}{\mathcal{C}}^2\sin2\phi_0+{\mathcal{C}}^2\phi_0,
\end{eqnarray}
where $\mathcal{A}$ only appears in constant term, thus $\frac{d\hat{F}_T}{d\phi_0}$ does not depend on $\mathcal{A}$. We write $\frac{d\hat{F}_T}{d\phi_0}(\phi_0;\mathcal{C})$. 

\section{Stability Behavior}\label{stabe}

We wish to study the stability behavior of our floating cylinder system. First of all, we have to find the equilibria based on the equivalence relation in Eq.~(\ref{equi}), that is the force balance point $\bar{\phi_0}$. To analyze $\hat{F}_T$, we consider  dimensionless parameters $\mathcal{A}>0$ and $\mathcal{C}>0$ to have physical meaning, the contact angle $\gamma \in \left[0, \pi\right]$ and the wetting angle $\phi_0 \in \left[0,\pi\right]$. The discussion will be divided into three cases: $\gamma = \frac{\pi}{2}$, $\gamma > \frac{\pi}{2}$ and $\gamma < \frac{\pi}{2}$. In this section, the inequalities in Eqs.~(\ref{matlabcheck2}),(\ref{matlabcheck3}),(\ref{matlabcheck4}),(\ref{matlabcheck5}),(\ref{matlabcheck6}),(\ref{matlabcheck7}),(\ref{matlabcheck8}) and (\ref{matlabcheck9}) are checked with Matlab.


\subsection{The Case $\gamma = \frac{\pi}{2}$}

\begin{theorem}\label{behaeqpiby2}
When $\gamma = \frac{\pi}{2}$, $\hat{F}_T$ curve has the following two properties:
\begin{enumerate}
\item $\hat{F}_T$ is centrally symmetric with respect to the point $\left(\frac{\pi}{2},\hat{F}_T\left(\frac{\pi}{2}\right)\right)$.
\item There are two critical points for $\hat{F}_T(\phi_0)$, one lies in $(0, \frac{\pi}{2})$ and the other is in $(\frac{\pi}{2}, \pi)$.
\end{enumerate}
\label{behapipy2}
\end{theorem}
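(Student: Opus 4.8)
The plan is to work from the product form of the total force obtained by setting $\gamma=\frac{\pi}{2}$ in Eq.~(\ref{toforcem2}). Since $\sin\left(\phi_0+\frac{\pi}{2}\right)=\cos\phi_0$ and $\frac{\phi_0+\pi/2}{2}=\frac{\phi_0}{2}+\frac{\pi}{4}$, this reads
\[
\hat{F}_T(\phi_0)=-\mathcal{A}\mathcal{C}^2-2\cos\phi_0-4\mathcal{C}\cos\left(\frac{\phi_0}{2}+\frac{\pi}{4}\right)\sin\phi_0-\frac{1}{2}\mathcal{C}^2\sin2\phi_0+\mathcal{C}^2\phi_0 ,
\]
and every subsequent step is elementary trigonometry carried out on this expression. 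For statement (1), I would substitute $\phi_0=\frac{\pi}{2}+t$ and simplify term by term using $\cos\left(\frac{\pi}{2}+t\right)=-\sin t$, $\cos\left(\frac{\pi}{2}+\frac{t}{2}\right)=-\sin\frac{t}{2}$, $\sin\left(\frac{\pi}{2}+t\right)=\cos t$, and $\sin(\pi+2t)=-\sin2t$. This yields
\[
\hat{F}_T\left(\frac{\pi}{2}+t\right)-\hat{F}_T\left(\frac{\pi}{2}\right)=2\sin t+4\mathcal{C}\sin\frac{t}{2}\cos t+\frac{1}{2}\mathcal{C}^2\sin2t+\mathcal{C}^2 t ,
\]
the leftover constants $-\mathcal{A}\mathcal{C}^2$ and $\frac{\pi}{2}\mathcal{C}^2$ combining into $\hat{F}_T\left(\frac{\pi}{2}\right)$. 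The right-hand side is an odd function of $t$, which is precisely the statement that the $\hat{F}_T$ curve is centrally symmetric about $\left(\frac{\pi}{2},\hat{F}_T\left(\frac{\pi}{2}\right)\right)$.

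For statement (2), I would first exploit (1): the derivative of ``constant plus odd function of $t$'' is even in $t$, so $\frac{d\hat{F}_T}{d\phi_0}$ is symmetric about $\phi_0=\frac{\pi}{2}$, and it suffices to count its zeros on $\left(0,\frac{\pi}{2}\right]$. Differentiating the product form twice gives
\[
\frac{d^2\hat{F}_T}{d\phi_0^2}=2\cos\phi_0+5\mathcal{C}\cos\left(\frac{\phi_0}{2}+\frac{\pi}{4}\right)\sin\phi_0+4\mathcal{C}\sin\left(\frac{\phi_0}{2}+\frac{\pi}{4}\right)\cos\phi_0+2\mathcal{C}^2\sin2\phi_0 ,
\]
and on $\left(0,\frac{\pi}{2}\right)$ every term is strictly positive, because $\cos\phi_0>0$, $\sin\phi_0>0$, $\sin2\phi_0>0$, and $\frac{\phi_0}{2}+\frac{\pi}{4}\in\left(\frac{\pi}{4},\frac{\pi}{2}\right)$ so both its sine and cosine are positive. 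Hence $\frac{d\hat{F}_T}{d\phi_0}$ is strictly increasing on $\left[0,\frac{\pi}{2}\right]$; evaluating the ends, $\frac{d\hat{F}_T}{d\phi_0}(0)=-2\sqrt{2}\,\mathcal{C}<0$ and $\frac{d\hat{F}_T}{d\phi_0}\left(\frac{\pi}{2}\right)=2+2\mathcal{C}+2\mathcal{C}^2>0$. By the intermediate value theorem there is exactly one critical point $\bar{\phi}_{01}\in\left(0,\frac{\pi}{2}\right)$, and $\phi_0=\frac{\pi}{2}$ is not critical. By the symmetry of $\frac{d\hat{F}_T}{d\phi_0}$ the only other critical point is the mirror image $\pi-\bar{\phi}_{01}\in\left(\frac{\pi}{2},\pi\right)$ (a second one in $\left(\frac{\pi}{2},\pi\right)$ would reflect to a second one in $\left(0,\frac{\pi}{2}\right)$); since $\frac{d\hat{F}_T}{d\phi_0}\neq 0$ at $\phi_0=0$ and $\phi_0=\pi$ as well, these two are all of them, which is statement (2).

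The computations are routine; the one place that needs attention is the choice of representation of $\hat{F}_T$. In the Fourier (``trigonometric series'') form the second derivative carries a term $-\frac{\mathcal{C}}{2}\cos\left(\frac{\phi_0}{2}+\frac{\pi}{4}\right)$ of the wrong sign on $\left(0,\frac{\pi}{2}\right)$, so positivity there is not manifest; it is the product form above — equivalent to the Fourier one after a single product-to-sum identity — that makes $\frac{d^2\hat{F}_T}{d\phi_0^2}>0$ on $\left(0,\frac{\pi}{2}\right)$ transparent, and I would present that reduction explicitly. I expect this to be the only slightly delicate point; the symmetry substitution $\phi_0=\frac{\pi}{2}\pm t$, the two endpoint evaluations of $\frac{d\hat{F}_T}{d\phi_0}$, and the monotonicity-plus-IVT argument are mechanical, and no Matlab-assisted inequality is needed for this particular case.
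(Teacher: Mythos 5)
Your proposal is correct and follows essentially the same route as the paper: the central symmetry is the same computation (your odd-function substitution $\phi_0=\frac{\pi}{2}+t$ is equivalent to the paper's identity $\hat{F}_T(\phi_0)+\hat{F}_T(\pi-\phi_0)=2\hat{F}_T(\frac{\pi}{2})$), and part (2) is the same argument of showing $\frac{d^2\hat{F}_T}{d\phi_0^2}>0$ on $\left(0,\frac{\pi}{2}\right)$, evaluating $\frac{d\hat{F}_T}{d\phi_0}$ at $0$, $\frac{\pi}{2}$, $\pi$, applying the intermediate value theorem, and reflecting. Your term-by-term positivity check of the product form is a clean justification of the inequality the paper simply asserts in Eq.~(\ref{matlabcheck1}) (whose own grouping $2\cos\phi_0+\mathcal{C}\cos(\frac{\phi_0}{2}+\frac{\pi}{4})\sin\phi_0+4\mathcal{C}\sin(\frac{3\phi_0}{2}+\frac{\pi}{4})+2\mathcal{C}^2\sin 2\phi_0$ is in fact also manifestly positive there), so no substantive difference remains.
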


\begin{proof}
\begin{enumerate}
\item Choose $\phi_0 \in \left[0, \pi\right]$ and so $\pi-\phi_0 \in \left[0, \pi\right]$. Moreover, 
\begin{equation*}
\hat{F}_T(\phi_0)+\hat{F}_T(\pi-\phi_0) = -2\mathcal{A}\mathcal{C}^2+\mathcal{C}^2\pi
= 2 \hat{F}_T\left(\frac{\pi}{2}\right).
\end{equation*}

\item We'll apply the intermediate value theorem to $\frac{d \hat{F}_T}{d \phi_0}$,
\begin{equation*}
\frac{d \hat{F}_T}{d \phi_0} = 2\sin\phi_0+2\mathcal{C}\sin\left(\frac{\phi_0}{2}+\frac{\pi}{4}\right)\sin\phi_0-4\mathcal{C}\cos\left(\frac{\phi_0}{2}+\frac{\pi}{4}\right)\cos\phi_0-\mathcal{C}^2\cos(2\phi_0)+\mathcal{C}^2.
\end{equation*}

We have a sign change of $\frac{d\hat{F}_T}{d \phi_0}$ on both subintervals $\left[0,\frac{\pi}{2}\right]$ and $\left[\frac{\pi}{2},\pi\right]$, since   
\begin{equation*}
\frac{d \hat{F}_T}{d \phi_0}(0) = -2\sqrt{2}\mathcal{C}<0,\quad \frac{d \hat{F}_T}{d \phi_0}\left(\frac{\pi}{2}\right) = 2(1+\mathcal{C}+\mathcal{C}^2)>0 \quad \textrm{and} \quad  \frac{d \hat{F}_T}{d \phi_0}(\pi) = -2\mathcal{C}<0. 
\end{equation*}

Moreover, $\frac{d \hat{F}_T}{d \phi_0}$ is strictly increasing on $\left(0, \frac{\pi}{2}\right)$, which follows from
\begin{equation}\label{matlabcheck1}
\frac{d^2 \hat{F}_T}{d{\phi^2_0}} = 2\cos\phi_0+\mathcal{C}\cos\left(\frac{\phi_0}{2}+\frac{\pi}{4}\right)\sin\phi_0+4\mathcal{C}\sin\left(\frac{3\phi_0}{2}+\frac{\pi}{4}\right)+2\mathcal{C}^2\sin(2\phi_0)>0.
\end{equation}

Since $\frac{d\hat{F}_T}{d\phi_0}$ is continuous on $\left[0,\frac{\pi}{2}\right]$, $\frac{d \hat{F}_T}{d \phi_0}(0)<0$ and $\frac{d \hat{F}_T}{d \phi_0}\left(\frac{\pi}{2}\right)>0$, $\hat{F}_T$ admits exactly one critical point in $\left(0, \frac{\pi}{2}\right)$ based on the intermediate value theorem. By centrally symmetry, $\hat{F}_T$ also admits another critical point in $\left(\frac{\pi}{2}, \pi\right)$.
\end{enumerate}
\end{proof}

\begin{figure}[h]
\includegraphics[scale = 0.4]{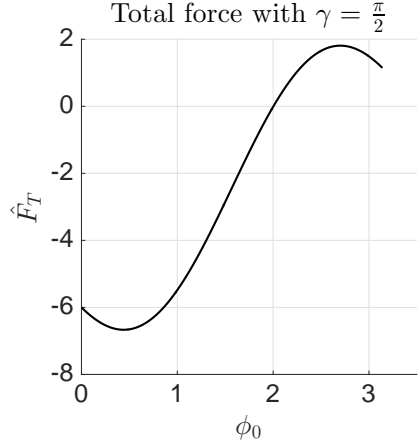}
\caption{$\hat{F}_T$ curve with parameters $\mathcal{A}=4$ and $\mathcal{C}=1$.}
\label{tf1}
\end{figure}

\begin{figure}[h]
\includegraphics[scale = 0.4]{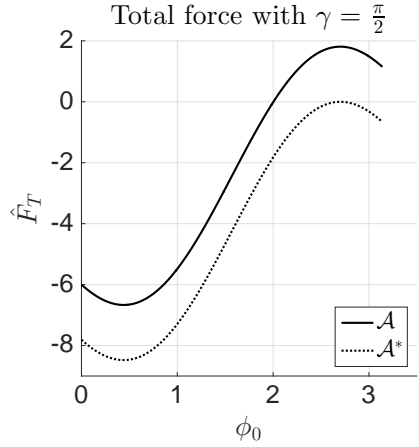}
\caption{$\hat{F}_T$ curves with $\mathcal{A} = 4$, $\mathcal{C}=1$ and $\mathcal{A}^*=5.0893$, $\mathcal{C}=1$.}
\label{tf2}
\end{figure}

In addition, the number of equilibria and their stability can be determined, as follows.

\begin{theorem}\label{eqstablepiby2}
For $\gamma = \frac{\pi}{2}$, $\hat{F}_T$ admits at most two equilibrium points (we label the equilibria $\bar{\phi}_{01}<\bar{\phi}_{02}$), the smaller $\bar{\phi}_{01}$ is stable and the larger $\bar{\phi}_{02}$ is unstable, where $\bar{\phi}_{02} \in (\frac{\pi}{2},\pi)$ if it exists. If $\hat{F}_T$ admits only one equilibrium point (we label it $\bar{\phi}_0$), $\bar{\phi}_0$ is stable if $\frac{d\hat{F}_T}{d\phi_0}(\bar{\phi}_0)>0$ and it is unstable if $\frac{d\hat{F}_T}{d\phi_0}(\bar{\phi}_0)=0$.
\end{theorem}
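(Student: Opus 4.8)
\emph{Proof proposal.} The plan is to deduce both the count and the stability of the equilibria purely from the monotonicity profile of the $\hat{F}_T$ curve established in Theorem~\ref{behaeqpiby2}, fed into the stability criteria (\ref{stableeq})--(\ref{unstableeq}) and the equilibrium equivalence (\ref{equi}). First I would pin down the exact shape of $\hat{F}_T$ on $[0,\pi]$. From the proof of Theorem~\ref{behaeqpiby2} we know $\hat{F}_T$ has exactly two critical points, $\phi_{c1}\in(0,\tfrac{\pi}{2})$ and $\phi_{c2}=\pi-\phi_{c1}\in(\tfrac{\pi}{2},\pi)$; that $\tfrac{d\hat{F}_T}{d\phi_0}$ is strictly increasing on $(0,\tfrac{\pi}{2})$ by (\ref{matlabcheck1}); and that $\tfrac{d\hat{F}_T}{d\phi_0}(0)<0<\tfrac{d\hat{F}_T}{d\phi_0}(\tfrac{\pi}{2})$. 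Differentiating the central-symmetry identity $\hat{F}_T(\phi_0)+\hat{F}_T(\pi-\phi_0)=2\hat{F}_T(\tfrac{\pi}{2})$ gives $\tfrac{d\hat{F}_T}{d\phi_0}(\phi_0)=\tfrac{d\hat{F}_T}{d\phi_0}(\pi-\phi_0)$, so $\tfrac{d\hat{F}_T}{d\phi_0}$ is strictly decreasing on $(\tfrac{\pi}{2},\pi)$. Hence $\tfrac{d\hat{F}_T}{d\phi_0}<0$ on $[0,\phi_{c1})$, $>0$ on $(\phi_{c1},\phi_{c2})$, and $<0$ on $(\phi_{c2},\pi]$; equivalently $\hat{F}_T$ strictly decreases on $[0,\phi_{c1}]$, strictly increases on $[\phi_{c1},\phi_{c2}]$, and strictly decreases on $[\phi_{c2},\pi]$, with $\phi_{c1}$ a strict local minimum and $\phi_{c2}$ a strict local maximum.

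Next I would locate the zeros of $\hat{F}_T$. Since $\hat{F}_T(0)=-\mathcal{A}\mathcal{C}^2-2<0$ and $\hat{F}_T$ decreases on $[0,\phi_{c1}]$, we get $\hat{F}_T<0$ there, so no equilibrium lies in $[0,\phi_{c1}]$; in particular $\phi_{c1}$ is never an equilibrium. On each of the two remaining monotone branches $(\phi_{c1},\phi_{c2})$ and $(\phi_{c2},\pi]$ there is at most one zero, so $\hat{F}_T$ has at most two zeros on $[0,\pi]$ and, by (\ref{equi}), the system has at most two equilibria. If two equilibria $\bar{\phi}_{01}<\bar{\phi}_{02}$ exist, then $\phi_{c2}$ cannot be one of them---a zero at the strict maximum would force $\hat{F}_T<0$ on both adjacent branches and hence be the only zero---so necessarily $\bar{\phi}_{01}\in(\phi_{c1},\phi_{c2})$ and $\bar{\phi}_{02}\in(\phi_{c2},\pi)\subset(\tfrac{\pi}{2},\pi)$. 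There $\tfrac{d\hat{F}_T}{d\phi_0}(\bar{\phi}_{01})>0$, so (\ref{stableeq}) makes $\bar{\phi}_{01}$ stable, while $\tfrac{d\hat{F}_T}{d\phi_0}(\bar{\phi}_{02})<0$, so (\ref{unstableeq}) makes $\bar{\phi}_{02}$ unstable.

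It remains to treat a unique equilibrium $\bar{\phi}_0$. By the zero-localization above it cannot lie in $(\phi_{c2},\pi]$: a zero in that last decreasing branch forces $\hat{F}_T(\phi_{c2})>0$, which produces a second zero in $(\phi_{c1},\phi_{c2})$. Thus either $\bar{\phi}_0\in(\phi_{c1},\phi_{c2})$, where $\tfrac{d\hat{F}_T}{d\phi_0}(\bar{\phi}_0)>0$ and (\ref{stableeq}) gives stability, or $\bar{\phi}_0=\phi_{c2}$, where $\tfrac{d\hat{F}_T}{d\phi_0}(\bar{\phi}_0)=0$ and (\ref{stableeq})--(\ref{unstableeq}) are silent. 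In the latter case I would argue from the energy directly: since $\phi_{c2}$ is a strict local maximum of $\hat{F}_T$ with value $0$, we have $F_T\le 0$ in a neighbourhood of $\phi_{c2}$, so $\sign\big(\tfrac{dE_T}{d\phi_0}\big)=\sign(F_T)\le 0$ and $E_T$ is monotone there (non-increasing in $\phi_0$, hence non-decreasing in $h$); a monotone function has no interior local minimum, so $\bar{\phi}_0$ is unstable, as claimed. (Equivalently, differentiating (\ref{2nddET}) once more gives $-\tfrac{d^3E_T}{d\phi_0^3}(\bar{\phi}_0)=\tfrac{d^2F_T}{d\phi_0^2}(\bar{\phi}_0)\,\tfrac{dh}{d\phi_0}(\bar{\phi}_0)\neq 0$, because $\tfrac{d^2\hat{F}_T}{d\phi_0^2}(\phi_{c2})<0$---by the antisymmetry of $\tfrac{d^2\hat{F}_T}{d\phi_0^2}$ about $\tfrac{\pi}{2}$ together with (\ref{matlabcheck1})---and $\tfrac{dh}{d\phi_0}<0$; hence $E_T$ has an inflection, not a minimum, at $\bar{\phi}_0$.)

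The main obstacle I anticipate is precisely this borderline single-equilibrium case $\bar{\phi}_0=\phi_{c2}$: when $\tfrac{d\hat{F}_T}{d\phi_0}(\bar{\phi}_0)=0$ the second-order stability test degenerates and one must return to the definition of stability---the monotonicity of $E_T$, or its third derivative---to conclude instability. Everything else is bookkeeping with the monotonicity intervals of $\hat{F}_T$ furnished by Theorem~\ref{behaeqpiby2}, anchored by the single fact $\hat{F}_T(0)<0$ that keeps the first decreasing branch strictly below zero.
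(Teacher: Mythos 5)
Your proposal is correct and follows essentially the same route as the paper: it reads off the decrease--increase--decrease profile of $\hat{F}_T$ from Theorem~\ref{behapipy2}, uses $\hat{F}_T(0)<0$ to exclude zeros on the first branch, and applies the criteria (\ref{stableeq})--(\ref{unstableeq}) on the remaining two monotone branches. Your treatment of the degenerate case $\frac{d\hat{F}_T}{d\phi_0}(\bar{\phi}_0)=0$ (via monotonicity of $E_T$, or the third derivative) is just a fuller justification of the paper's observation that $\frac{d\hat{F}_T}{d\phi_0}$ changes sign from positive to negative there.
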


\begin{proof}
Based on Theorem \ref{behapipy2}, $\hat{F}_T$ decreases at the beginning then reaches the first critical point, and then $\hat{F}_T$ increases until reaching the second critical point, finally $\hat{F}_T$ decreases (see Fig.~\ref{tf1}). 

Moreover, at $\phi_0 = 0$, we have
\begin{equation}\label{f0value}
\hat{F}_T(0)=-\mathcal{A}\mathcal{C}^2-2\sin\gamma<0.
\end{equation}
At $\phi_0 = \pi$, we have
\begin{equation}\label{fpivalue}
\hat{F}_T(\pi) = 2\sin\gamma + \mathcal{C}^2(\pi-\mathcal{A}).
\end{equation}

From Eqs.~(\ref{f0value}) and (\ref{fpivalue}), we obtain 
\begin{equation}\label{endpts}
\hat{F}_T(\pi)> \hat{F}_T(0) \quad \textrm{for} \quad \textrm{arbitrary } \gamma. 
\end{equation}

The behavior of $\hat{F}_T$ shows it admits at most two equilibrium points. If $\bar{\phi}_{02}$ exists, it would be always greater than the second critical point of $\hat{F}_T(\phi_0)$, thus $\bar{\phi}_{02} \in (\frac{\pi}{2},\pi)$. Their stability immediately comes from the criteria (\ref{stableeq}) and (\ref{unstableeq}). For the $\frac{d\hat{F}_T}{d\phi_0}(\bar{\phi}_0)=0$ case, $\bar{\phi}_0$ is unstable, since $\frac{d\hat{F}_T}{d\phi_0}(\bar{\phi}_0-\epsilon)>0$ and $\frac{d\hat{F}_T}{d\phi_0}(\bar{\phi}_0+\epsilon)<0$, for small $\epsilon>0$.
\end{proof}

We also consider how the values of $\mathcal{A}$ affect the number of equilibria of $\hat{F}_T$. Since $\mathcal{A}$ only appears in the constant term of $\hat{F}_T$, if the value of $\mathcal{A}$ increases, the curve of $\hat{F}_T$ will shift down (see Fig.~\ref{tf2}). Given the value of $\mathcal{C}$, we define $\mathcal{A}^*$ such that
\begin{equation}
\hat{F}_T(\phi_0^*;\mathcal{A}^*)=0,
\label{asymeqn}
\end{equation}
where $\phi_0^*>\frac{\pi}{2}$ is the second critical point of $\hat{F}_T$. Unfortunately, $\mathcal{A}^*$ has to be found numerically. The following table~\ref{eqstabletb} shows the number of equilibria and their stability for different values of $\mathcal{A}$. In addition, the number of equilibria can also be shown in $\mathcal{C}$ vs $\mathcal{A}$ Figures. The details will be discussed in Sec.~\ref{AC}.

\begin{table}[h]
\caption{Number of equilibria and their stability for different $\mathcal{A}$ for $\gamma = \frac{\pi}{2}$.}
\begin{ruledtabular}
\begin{tabular}{ccc}
Range of $\mathcal{A}$&Number of Equilibria&Stability\\
\hline
$0<\mathcal{A}<\frac{2}{\mathcal{C}^2}+\pi$ & 1 & stable\\
$\frac{2}{\mathcal{C}^2}+\pi\leq \mathcal{A}<\mathcal{A}^*$ & 2 & $\bar{\phi}_{01}$ is stable, $\bar{\phi}_{02}$ is unstable \\
$\mathcal{A}=\mathcal{A}^*$ & 1 & unstable\footnote{Since $\frac{d\hat{F}_T}{d\phi_0}(\bar{\phi}_0-\epsilon)>0$ and $\frac{d\hat{F}_T}{d\phi_0}(\bar{\phi}_0+\epsilon)<0$, for small $\epsilon>0$.} \\
$\mathcal{A}>\mathcal{A}^*$ & 0 & NA
\end{tabular}
\end{ruledtabular}
\label{eqstabletb}
\end{table}

\subsection{The Case $\gamma > \frac{\pi}{2}$}

When $\gamma > \frac{\pi}{2}$, the behavior of $\hat{F}_T(\phi_0)$ depends on $\sign\left(\frac{d\hat{F}_T}{d\phi_0}\right)$ at the end point $\phi_0=0$. This leads to the following theorem: 
\begin{theorem}\label{behagthanpiby2}
For $\gamma > \frac{\pi}{2}$, there are two types of behavior of the total force $\hat{F}_T$ curve. 
\begin{enumerate}
\item If $\frac{d \hat{F}_T}{d \phi_0}(0)<0$, there are two critical points, one lies in $\left(0, \frac{\pi}{4}\right)$, the other lies in $\left(\frac{\pi}{2}, \pi\right)$. $\hat{F}_T$ decreases to the first critical point, then increases to the second critical point, and then decreases. 
\item If $\frac{d \hat{F}_T}{d \phi_0}(0) \geq 0$, there is only one critical point in $\left(\frac{\pi}{2}, \pi\right)$. $\hat{F}_T$ increases to the only critical point and then decreases.
\end{enumerate}
\end{theorem}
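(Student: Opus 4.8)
The plan is to carry out, for $\gamma\in\left(\frac{\pi}{2},\pi\right)$, the same kind of sign analysis used for Theorems~\ref{behaeqpiby2} and \ref{eqstablepiby2}, working throughout with the $\mathcal{A}$-independent function $g(\phi_0):=\frac{d\hat{F}_T}{d\phi_0}(\phi_0;\mathcal{C})$, since the critical points of $\hat{F}_T$ and its intervals of monotonicity are governed entirely by the zeros and the sign of $g$. From (\ref{toforcem2}) one computes $g(0)=-4\mathcal{C}\cos\frac{\gamma}{2}-2\cos\gamma$, $g\!\left(\frac{\pi}{2}\right)=2\mathcal{C}\cos\!\left(\frac{\gamma}{2}-\frac{\pi}{4}\right)+2\sin\gamma+2\mathcal{C}^{2}$, and $g(\pi)=-4\mathcal{C}\sin\frac{\gamma}{2}+2\cos\gamma$. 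For $\gamma\in\left(\frac{\pi}{2},\pi\right)$ every term of $g\!\left(\frac{\pi}{2}\right)$ is positive and both terms of $g(\pi)$ are negative, so $g\!\left(\frac{\pi}{2}\right)>0>g(\pi)$ holds unconditionally, whereas $g(0)$ can have either sign: $g(0)<0$ exactly when $\mathcal{C}>\frac{-\cos\gamma}{2\cos(\gamma/2)}$. This last sign is the dichotomy in the statement, and the argument will split $[0,\pi]$ into $\left[0,\frac{\pi}{4}\right]$, $\left[\frac{\pi}{4},\frac{\pi}{2}\right]$ and $\left[\frac{\pi}{2},\pi\right]$, with the last two intervals behaving the same way in both cases.

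For the sign work I would first rewrite, using sum-to-product identities, $g=-2\mathcal{C}\cos\frac{\phi_0+\gamma}{2}\cos\phi_0-2\mathcal{C}\cos\frac{3\phi_0+\gamma}{2}-2\cos(\gamma+\phi_0)+2\mathcal{C}^{2}\sin^{2}\phi_0$ and $\frac{d^{2}\hat{F}_T}{d\phi_0^{2}}=\mathcal{C}\cos\frac{\phi_0+\gamma}{2}\sin\phi_0+4\mathcal{C}\sin\frac{3\phi_0+\gamma}{2}+2\sin(\gamma+\phi_0)+2\mathcal{C}^{2}\sin2\phi_0$. On $\left(\frac{\pi}{2},\pi\right)$ all four terms of $\frac{d^{2}\hat{F}_T}{d\phi_0^{2}}$ are negative, since there $\frac{\phi_0+\gamma}{2}\in\left(\frac{\pi}{2},\pi\right)$, $\frac{3\phi_0+\gamma}{2}\in(\pi,2\pi)$, $\gamma+\phi_0\in(\pi,2\pi)$ and $2\phi_0\in(\pi,2\pi)$; hence $g$ is strictly decreasing on $\left(\frac{\pi}{2},\pi\right)$, and with $g\!\left(\frac{\pi}{2}\right)>0>g(\pi)$ this yields exactly one critical point $\bar{\phi}_{02}\in\left(\frac{\pi}{2},\pi\right)$, with $g>0$ on its left and $g<0$ on its right, so $\hat{F}_T$ increases up to $\bar{\phi}_{02}$ and decreases afterwards. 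On $\left[\frac{\pi}{4},\frac{\pi}{2}\right]$ one has $\cos\phi_0\ge0$, the terms $-2\mathcal{C}\cos\frac{3\phi_0+\gamma}{2}$ and $-2\cos(\gamma+\phi_0)$ are positive, $2\mathcal{C}^{2}\sin^{2}\phi_0\ge0$, and only $-2\mathcal{C}\cos\frac{\phi_0+\gamma}{2}\cos\phi_0$ can be negative (and only for $\phi_0<\pi-\gamma$); bounding this one term against the others shows $g>0$ on $\left[\frac{\pi}{4},\frac{\pi}{2}\right]$ --- one of the inequalities checked with Matlab --- so there is no critical point there, and in particular $g\!\left(\frac{\pi}{4}\right)>0$ (which is also elementary, because the $\mathcal{C}^{2}$-oscillating term vanishes at $\phi_0=\frac{\pi}{4}$, leaving $g\!\left(\frac{\pi}{4}\right)$ as a sum of positive terms).

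The two cases then differ only on $\left(0,\frac{\pi}{4}\right)$. If $\frac{d\hat{F}_T}{d\phi_0}(0)<0$, the forcing $\mathcal{C}>\frac{-\cos\gamma}{2\cos(\gamma/2)}$ makes the $2\mathcal{C}^{2}\sin2\phi_0$ contribution dominate, so $\frac{d^{2}\hat{F}_T}{d\phi_0^{2}}>0$ on $\left(0,\frac{\pi}{4}\right)$ under that constraint (Matlab-checked); thus $g$ is strictly increasing on $\left(0,\frac{\pi}{4}\right)$, and since $g(0)<0<g\!\left(\frac{\pi}{4}\right)$ it has a single zero there. Together with the previous step, $\hat{F}_T$ then has exactly two critical points, one in $\left(0,\frac{\pi}{4}\right)$ and one in $\left(\frac{\pi}{2},\pi\right)$, with $g$ negative, then positive, then negative across them --- this is the decrease--increase--decrease behaviour of part~1. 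If instead $\frac{d\hat{F}_T}{d\phi_0}(0)\ge0$, the claim is $g>0$ on all of $\left(0,\frac{\pi}{4}\right)$ (Matlab-checked); combined with $g>0$ on $\left[\frac{\pi}{4},\frac{\pi}{2}\right]$ this leaves no critical point in $\left[0,\frac{\pi}{2}\right]$, so $\bar{\phi}_{02}$ is the only one and $\hat{F}_T$ increases then decreases --- part~2. The boundary value $\frac{d\hat{F}_T}{d\phi_0}(0)=0$ is harmless because $\frac{d^{2}\hat{F}_T}{d\phi_0^{2}}(0)=4\mathcal{C}\sin\frac{\gamma}{2}+2\sin\gamma>0$, so $g>0$ immediately to the right of $0$.

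The main obstacle is the rigorous, as opposed to numerical, justification of the three interval inequalities flagged as ``checked with Matlab'': $g>0$ on $\left[\frac{\pi}{4},\frac{\pi}{2}\right]$, $\frac{d^{2}\hat{F}_T}{d\phi_0^{2}}>0$ on $\left(0,\frac{\pi}{4}\right)$ when $\frac{d\hat{F}_T}{d\phi_0}(0)<0$, and $g>0$ on $\left(0,\frac{\pi}{4}\right)$ when $\frac{d\hat{F}_T}{d\phi_0}(0)\ge0$. Each is an inequality in the two parameters $(\gamma,\mathcal{C})\in\left(\frac{\pi}{2},\pi\right)\times(0,\infty)$, with the constraint linking $\mathcal{C}$ and $\gamma$ in the last two; the large-$\mathcal{C}$ regime is controlled by the explicit $\mathcal{C}^{2}$ terms and the small-$\mathcal{C}$ regime by the leading trigonometric part, so the real work is a careful term-by-term estimate closing the gap for intermediate $\mathcal{C}$.
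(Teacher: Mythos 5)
Your proposal is correct and follows essentially the same route as the paper's proof: the same decomposition of $[0,\pi]$ into $\left[0,\frac{\pi}{4}\right]$, $\left[\frac{\pi}{4},\frac{\pi}{2}\right]$ and $\left[\frac{\pi}{2},\pi\right]$, the same dichotomy on $\sign\left(\frac{d\hat{F}_T}{d\phi_0}(0)\right)$, the same convexity/concavity-plus-intermediate-value-theorem structure on each subinterval, and the same deferral of the hard two-parameter interval inequalities to numerical verification (these are exactly the paper's Eqs.~(\ref{matlabcheck2})--(\ref{matlabcheck6})). The one small improvement is your term-by-term sign argument showing $\frac{d^2\hat{F}_T}{d\phi_0^2}<0$ on $\left(\frac{\pi}{2},\pi\right)$, which gives an elementary proof of the inequality the paper only checks with Matlab in (\ref{matlabcheck6}).
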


\begin{proof}
\begin{enumerate}
\item We firstly consider the above two cases for $\phi_0 \in \left[0, \frac{\pi}{4}\right]$. If $\frac{d \hat{F}_T}{d \phi_0}(0)<0$, we have 
\begin{equation}
2\mathcal{C}\cos\frac{\gamma}{2}+\cos\gamma >0 \quad \Leftrightarrow \quad  2\cos\gamma\sin\phi_0 > -4\mathcal{C}\cos\frac{\gamma}{2}\sin\phi_0,
\label{piby2ineq}
\end{equation}
where $\gamma \neq \pi$ and $\phi_0 \neq 0$. The inequality in (\ref{piby2ineq}) gives the underlined terms in the following 
\begin{eqnarray}\label{matlabcheck2}
\frac{d^2 \hat{F}_T}{d{\phi^2_0}} &=& \frac{\mathcal{C}}{2}\left(\cos\frac{\gamma}{2}\sin\frac{\phi_0}{2}-\sin\frac{\gamma}{2}\cos\frac{\phi_0}{2}+9\cos\frac{\gamma}{2}\sin\frac{3\phi_0}{2}+9\sin\frac{\gamma}{2}\cos\frac{3\phi_0}{2}\right)\nonumber\\
&&+2\mathcal{C}^2\sin2\phi_0+\underline{2\cos\gamma\sin\phi_0}+2\sin\gamma\cos\phi_0\nonumber\\
&>&\frac{\mathcal{C}}{2}\left(\cos\frac{\gamma}{2}\sin\frac{\phi_0}{2}-\sin\frac{\gamma}{2}\cos\frac{\phi_0}{2}-\underline{8\cos\frac{\gamma}{2}\sin\phi_0}+9\cos\frac{\gamma}{2}\sin\frac{3\phi_0}{2}+9\sin\frac{\gamma}{2}\cos\frac{3\phi_0}{2}\right)\nonumber\\
&&+2\mathcal{C}^2\sin2\phi_0+4\sin\gamma\cos\phi_0>0.
\end{eqnarray}

Inequality~(\ref{matlabcheck2}) is obtained by showing that the coefficients of different powers of $\mathcal{C}$ are positive. Matlab is used to check that the coefficient of $\mathcal{C}$ is positive. Moreover, $\frac{d^2 \hat{F}_T}{d{\phi^2_0}}(0) = 4\mathcal{C}\sin\frac{\gamma}{2}+2\sin\gamma >0$. Therefore, $\frac{d\hat{F}_T}{d\phi_0}$ is increasing on $\left[0, \frac{\pi}{4}\right]$. 

Applying the intermediate value theorem again, with $\frac{d\hat{F}_T}{d\phi_0}(0)<0$ and 
\begin{eqnarray}\label{matlabcheck3}
\frac{d\hat{F}_T}{d\phi_0}(\frac{\pi}{4}) &=&\mathcal{C}\left(-\cos\frac{\gamma}{2}\cos\frac{\pi}{8}-\sin\frac{\gamma}{2}\sin\frac{\pi}{8}-3\cos\frac{\gamma}{2}\cos\frac{3\pi}{8}+3\sin\frac{\gamma}{2}\sin\frac{3\pi}{8}\right)\nonumber\\
&&+\mathcal{C}^2+\sqrt{2}\left(\sin\gamma-\cos\gamma\right)>0.
\end{eqnarray}
Thus, we conclude $\hat{F}_T$ has a critical point, which lies in $\left(0,\frac{\pi}{4}\right)$.

If $\frac{d \hat{F}_T}{d \phi_0}(0)\geq0$, we have 
\begin{equation}
2\mathcal{C}\cos\frac{\gamma}{2}+\cos\gamma\leq 0,
\label{thmcon}
\end{equation}
where the condition in (\ref{thmcon}) implies the monotonicity of $\hat{F}_T$ in $\left[0,\frac{\pi}{4}\right]$:
\begin{eqnarray}\label{matlabcheck4}
\frac{d\hat{F}_T}{d\phi_0} &=& \mathcal{C}^2-\mathcal{C}^2\cos2\phi_0
-2\cos\gamma\cos\phi_0+2\sin\gamma\sin\phi_0\nonumber\\
&&+\mathcal{C}\left(-\cos\frac{\gamma}{2}\cos\frac{\phi_0}{2}-\sin\frac{\gamma}{2}\sin\frac{\phi_0}{2}-3\cos\frac{\gamma}{2}\cos\frac{3\phi_0}{2}+3\sin\frac{\gamma}{2}\sin\frac{3\phi_0}{2}\right)\nonumber\\
&\geq& \left(2\sin\gamma\sin\phi_0-2\cos\gamma\cos\phi_0+\frac{1}{2}\cos\gamma\cos\frac{\phi_0}{2}+\frac{3}{2}\cos\gamma\cos\frac{3\phi_0}{2}\right) \nonumber \\
&&+\mathcal{C}^2(1-\cos2\phi_0)+\mathcal{C}\sin\frac{\gamma}{2}\left(3\sin\frac{3\phi_0}{2}-\sin\frac{\phi_0}{2}\right)\geq0,
\end{eqnarray}

the equality only holds for $\phi_0=0$. Therefore, $\hat{F}_T$ increases on $\left[0, \frac{\pi}{4}\right]$.

\item Next we consider $\phi_0 \in \left[\frac{\pi}{4}, \frac{\pi}{2}\right]$, 
\begin{eqnarray}\label{matlabcheck5}
\frac{d \hat{F}_T}{d \phi_0} &=& \mathcal{C}\left(-\cos\frac{\gamma}{2}\cos\frac{\phi_0}{2}-\sin\frac{\gamma}{2}\sin\frac{\phi_0}{2}-3\cos\frac{\gamma}{2}\cos\frac{3\phi_0}{2}+3\sin\frac{\gamma}{2}\sin\frac{3\phi_0}{2}\right) \nonumber\\
&& +(2\sin\gamma\sin\phi_0-2\cos\gamma\cos\phi_0) +\mathcal{C}^2(1-\cos 2\phi_0) >0.
\end{eqnarray}

Hence, $\hat{F}_T$ is increasing on $\left[\frac{\pi}{4}, \frac{\pi}{2}\right]$.

\item Finally, considering $\phi_0 \in \left[\frac{\pi}{2}, \pi\right]$, we have
\begin{eqnarray}\label{matlabcheck6}
\frac{d^2 \hat{F}_T}{d{\phi^2_0}} &=& \frac{\mathcal{C}}{2}\left(\cos\frac{\gamma}{2}\sin\frac{\phi_0}{2}-\sin\frac{\gamma}{2}\cos\frac{\phi_0}{2}+9\cos\frac{\gamma}{2}\sin\frac{3\phi_0}{2}+9\sin\frac{\gamma}{2}\cos\frac{3\phi_0}{2}\right)\nonumber\\
&&+2\mathcal{C}^2\sin2\phi_0+2(\cos\gamma\sin\phi_0+\sin\gamma\cos\phi_0)<0.
\end{eqnarray}

Moreover, at $\phi_0 = \frac{\pi}{2}$ and $\phi_0 = \pi$,
\begin{eqnarray*}
\frac{d\hat{F}_T}{d\phi_0}(\pi) &=& -4\mathcal{C}\sin\frac{\gamma}{2}+2\cos\gamma<0,\\
\frac{d \hat{F}_T}{d \phi_0}\left(\frac{\pi}{2}\right) &=& \sqrt{2}\mathcal{C}\cos\frac{\gamma}{2}+\sqrt{2}\mathcal{C}\sin\frac{\gamma}{2}+2\sin\gamma+2\mathcal{C}^2 >0.
\end{eqnarray*}

Thus $\frac{d \hat{F}_T}{d \phi_0}$ is monotone decreasing on $\left[\frac{\pi}{2}, \pi\right]$. By the intermediate value theorem, there exists a $\phi^* \in \left(\frac{\pi}{2}, \pi\right)$ such that $\frac{d\hat{F}_T}{d\phi_0}(\phi^*) = 0$. Therefore, $\hat{F}_T(\phi_0)$ increases on $[0,\phi^*]$ and then decreases on $[\phi^*,\pi]$.
\end{enumerate}
\end{proof}

\begin{figure}[h!]
\includegraphics[scale = 0.4]{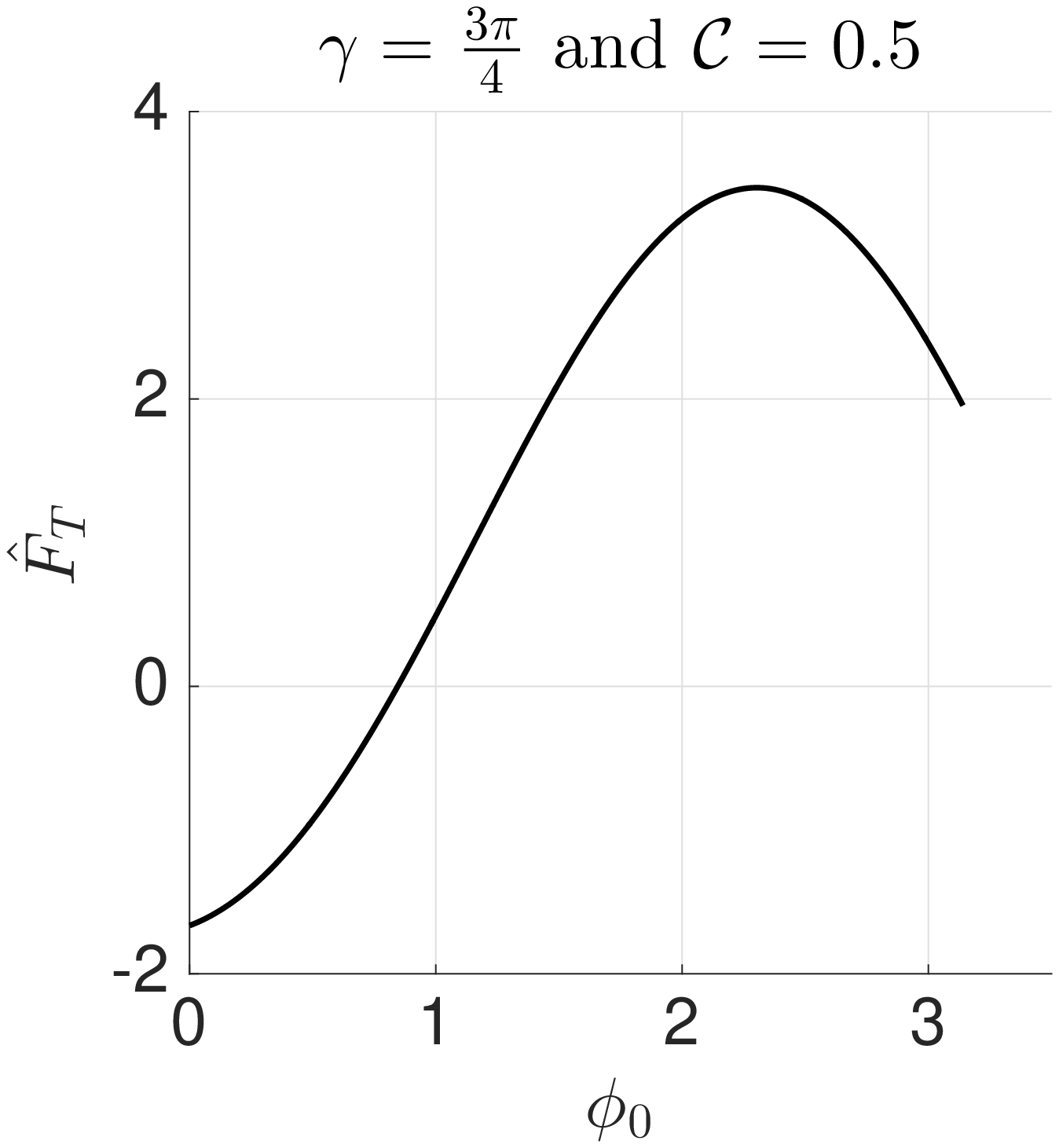}
\caption{$\hat{F}_T$ curve with parameters $\gamma = \frac{3\pi}{4}$, $\mathcal{C}=0.5$ and $\mathcal{A}=1$ such that $\frac{d\hat{F}_T}{d\phi_0}(0)\geq 0$. }
\label{case1}
\end{figure}

\begin{figure}[h!]
\includegraphics[scale = 0.4]{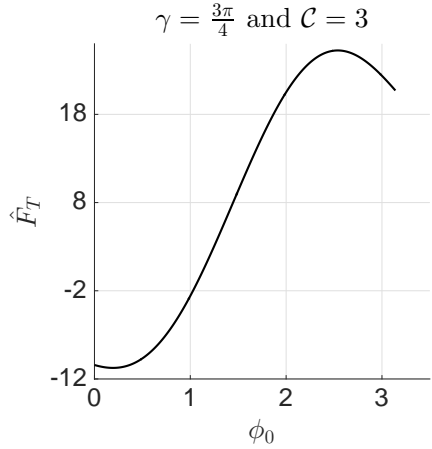}
\caption{$\hat{F}_T$ curve with parameters $\gamma = \frac{3\pi}{4}$, $\mathcal{C}=3$ and $\mathcal{A}=1$ such that $\frac{d\hat{F}_T}{d\phi_0}(0)<0$. }
\label{case2}
\end{figure}

Based on Theorem \ref{behagthanpiby2}, there are two types of behavior of $\hat{F}_T$ for $\gamma >\frac{\pi}{2}$: two typical examples of those cases are shown in Fig.~\ref{case1} and Fig.~\ref{case2}. The following Theorem shows the number of equilibria and their stability for $\gamma >\frac{\pi}{2}$.

\begin{theorem}\label{eqstablegtpiby2}
For $\gamma >\frac{\pi}{2}$, $\hat{F}_T$ admits at most two equilibrium points, the smaller $\bar{\phi}_{01}$ is stable and the larger $\bar{\phi}_{02}$ is unstable, where $\bar{\phi}_{02} \in (\frac{\pi}{2},\pi]$ if it exists. If $\hat{F}_T$ admits only one equilibrium point, $\bar{\phi}_0$ is stable if $\frac{d\hat{F}_T}{d\phi_0}(\bar{\phi}_0)>0$ and it is unstable if $\frac{d\hat{F}_T}{d\phi_0}(\bar{\phi}_0)=0$.
\end{theorem}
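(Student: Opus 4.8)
The plan is to mirror the proof of Theorem~\ref{eqstablepiby2}, now feeding in the two shape alternatives supplied by Theorem~\ref{behagthanpiby2} together with the endpoint data in Eqs.~(\ref{f0value}), (\ref{fpivalue}) and (\ref{endpts}) and the stability criteria (\ref{stableeq})--(\ref{unstableeq}). First I would record the two facts that hold uniformly on the range $\gamma\in(\frac{\pi}{2},\pi]$: the curve starts below the axis, $\hat{F}_T(0)=-\mathcal{A}\mathcal{C}^2-2\sin\gamma<0$, since $\sin\gamma\ge 0$ there; and it ends higher than it starts, $\hat{F}_T(\pi)-\hat{F}_T(0)=4\sin\gamma+\pi\mathcal{C}^2>0$, by (\ref{f0value}), (\ref{fpivalue}), (\ref{endpts}).

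Next I would split into the two behaviors of Theorem~\ref{behagthanpiby2}. In the first behavior ($\frac{d\hat{F}_T}{d\phi_0}(0)<0$), $\hat{F}_T$ decreases on $[0,c_1]$ with $c_1\in(0,\frac{\pi}{4})$, increases on $[c_1,c_2]$ with $c_2\in(\frac{\pi}{2},\pi)$, then decreases on $[c_2,\pi]$. Since $\hat{F}_T$ is negative and decreasing on $[0,c_1]$ it has no zero there; on the increasing stretch $[c_1,c_2]$ it is strictly monotone, hence has at most one zero, which, if present, is the smallest equilibrium $\bar{\phi}_{01}$ and satisfies $\frac{d\hat{F}_T}{d\phi_0}(\bar{\phi}_{01})>0$, so it is stable by (\ref{stableeq}); on the decreasing stretch $[c_2,\pi]$ it again has at most one zero, the larger equilibrium $\bar{\phi}_{02}\in(c_2,\pi]\subset(\frac{\pi}{2},\pi]$, which for an interior root satisfies $\frac{d\hat{F}_T}{d\phi_0}(\bar{\phi}_{02})<0$ and is unstable by (\ref{unstableeq}); if the root sits at $\phi_0=\pi$ one invokes $\frac{d\hat{F}_T}{d\phi_0}(\pi)<0$ from the proof of Theorem~\ref{behagthanpiby2} to reach the same conclusion. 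This already yields at most two equilibria. The second behavior ($\frac{d\hat{F}_T}{d\phi_0}(0)\ge 0$) is handled identically, with the single critical point $c\in(\frac{\pi}{2},\pi)$ playing the role of $c_2$: $\hat{F}_T$ increases on $[0,c]$ (at most one zero, stable) and decreases on $[c,\pi]$ (at most one zero, $\bar{\phi}_{02}\in(\frac{\pi}{2},\pi]$, unstable).

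For the one-equilibrium statement, if $\hat{F}_T$ has exactly one zero $\bar{\phi}_0$ then by the above it is either a transversal crossing lying on an increasing portion, where $\frac{d\hat{F}_T}{d\phi_0}(\bar{\phi}_0)>0$ and $\bar{\phi}_0$ is stable, or it is the tangency at the peak ($c_2$ in the first behavior, $c$ in the second), where $\hat{F}_T(\bar{\phi}_0)=0$ and $\frac{d\hat{F}_T}{d\phi_0}(\bar{\phi}_0)=0$; in the latter case $\frac{d\hat{F}_T}{d\phi_0}>0$ immediately to the left of $\bar{\phi}_0$ and $<0$ immediately to the right, so exactly as in Theorem~\ref{eqstablepiby2} the equilibrium is unstable. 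I would also note that a lone zero can never occur at $\phi_0=\pi$, since a root at $\pi$ forces the peak value to be positive and hence a second root on the increasing arm. Combining the two behaviors gives the stated dichotomy.

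The bulk of the work has already been done in Theorem~\ref{behagthanpiby2} (the monotonicity inequalities (\ref{matlabcheck2})--(\ref{matlabcheck6})); what remains here is the bookkeeping of how a function with a known ``valley--peak'' or ``peak'' profile can meet the horizontal axis, and I expect no serious obstacle. The only points needing a little care are the boundary behavior at $\phi_0=\pi$ --- which is why the larger root lives in the half-open interval $(\frac{\pi}{2},\pi]$, and why the value $\hat{F}_T(\pi)$ in (\ref{fpivalue}) and the sign of $\frac{d\hat{F}_T}{d\phi_0}(\pi)$ are invoked --- and the degenerate tangential equilibrium, where instability must be argued from the sign change of $\frac{d\hat{F}_T}{d\phi_0}$ rather than directly from (\ref{unstableeq}).
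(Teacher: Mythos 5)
Your proposal is correct and follows essentially the same route as the paper: the paper's proof of Theorem~\ref{eqstablegtpiby2} simply defers to the argument of Theorem~\ref{eqstablepiby2}, citing Eq.~(\ref{endpts}), the shape result of Theorem~\ref{behagthanpiby2}, and the criteria (\ref{stableeq})--(\ref{unstableeq}), which is exactly the bookkeeping you carry out. Your explicit treatment of the degenerate tangential root and of why a lone root cannot sit at $\phi_0=\pi$ merely spells out details the paper leaves implicit.
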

\begin{proof}
It is similar to Theorem~\ref{eqstablepiby2}. The result comes from Eq.~(\ref{endpts}), Theorem \ref{behagthanpiby2} and the criteria (\ref{stableeq}) and (\ref{unstableeq}).
\end{proof}

\subsection{The Case $\gamma < \frac{\pi}{2}$}

When $\gamma < \frac{\pi}{2}$, the behavior of $\hat{F}_T(\phi_0)$ depends on $\sign\left(\frac{d\hat{F}_T}{d\phi_0}\right)$ at end point $\phi_0 = \pi$. We obtain the following theorem: 

\begin{theorem}\label{behalthanpiby2}
For $\gamma < \frac{\pi}{2}$, there are two types of behavior of the total force $\hat{F}_T$ curve. 
\begin{enumerate}
\item If $\frac{d \hat{F}_T}{d \phi_0}(\pi)<0$, there are two critical points, one lies in $\left(0,\frac{\pi}{2}\right)$, the other lies in $\left(\frac{3\pi}{4}, \pi\right)$. $\hat{F}_T$ decreases to the first critical point, then increases to the second critical point, and then decreases. 
\item If $\frac{d \hat{F}_T}{d \phi_0}(\pi) \geq 0$, there is only one critical point in $\left(0,\frac{\pi}{2}\right)$. $\hat{F}_T$ decreases to the only critical point and then increases.
\end{enumerate}
\end{theorem}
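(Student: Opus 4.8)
The plan is to mimic the proof of Theorem~\ref{behagthanpiby2}, mirrored about the endpoint $\phi_0=\pi$. The structural difference that makes the case $\gamma<\frac{\pi}{2}$ work is that now $\frac{d\hat{F}_T}{d\phi_0}(0)=-2\cos\gamma-4\mathcal{C}\cos\frac{\gamma}{2}$ is \emph{automatically} negative (both $\cos\gamma>0$ and $\cos\frac{\gamma}{2}>0$), so a critical point always sits near $\phi_0=0$; it is the sign of $\frac{d\hat{F}_T}{d\phi_0}(\pi)=2\cos\gamma-4\mathcal{C}\sin\frac{\gamma}{2}$ that separates the two cases. Unlike the $\gamma>\frac{\pi}{2}$ situation, the $\mathcal{C}^0$-part of $\frac{d^2\hat{F}_T}{d\phi_0^2}$, namely $2\sin(\phi_0+\gamma)$, no longer keeps a fixed sign near $\phi_0=\pi$, which is why the finer three-piece partition $\left[0,\frac{\pi}{2}\right]$, $\left[\frac{\pi}{2},\frac{3\pi}{4}\right]$, $\left[\frac{3\pi}{4},\pi\right]$ from the statement is needed. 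On each piece I would pin down the sign of $\frac{d\hat{F}_T}{d\phi_0}$, using $\frac{d^2\hat{F}_T}{d\phi_0^2}$ when a monotonicity argument is required.

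On $\left[0,\frac{\pi}{2}\right]$ the goal is $\frac{d^2\hat{F}_T}{d\phi_0^2}>0$ (this is the Matlab-verified inequality~(\ref{matlabcheck7})). Grouping $\frac{d^2\hat{F}_T}{d\phi_0^2}$ by powers of $\mathcal{C}$, the $\mathcal{C}^0$-part is $2\sin(\phi_0+\gamma)$, positive on the interior because $\phi_0+\gamma\in(0,\pi)$; the $\mathcal{C}^2$-part $2\mathcal{C}^2\sin2\phi_0$ is nonnegative; and the remaining $\mathcal{C}^1$-coefficient simplifies to $\frac{1}{2}\left(\sin\frac{\phi_0-\gamma}{2}+9\sin\frac{3\phi_0+\gamma}{2}\right)$, which is the term checked numerically. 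Hence $\frac{d\hat{F}_T}{d\phi_0}$ is strictly increasing on $\left[0,\frac{\pi}{2}\right]$; together with $\frac{d\hat{F}_T}{d\phi_0}(0)<0$ and $\frac{d\hat{F}_T}{d\phi_0}\left(\frac{\pi}{2}\right)=\sqrt{2}\,\mathcal{C}\cos\frac{\gamma}{2}+\sqrt{2}\,\mathcal{C}\sin\frac{\gamma}{2}+2\sin\gamma+2\mathcal{C}^2>0$, the intermediate value theorem produces a unique critical point $\bar{\phi}_{01}\in\left(0,\frac{\pi}{2}\right)$, with $\hat{F}_T$ decreasing on $[0,\bar{\phi}_{01}]$ and increasing on $\left[\bar{\phi}_{01},\frac{\pi}{2}\right]$.

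On $\left[\frac{\pi}{2},\frac{3\pi}{4}\right]$ the goal is $\frac{d\hat{F}_T}{d\phi_0}>0$ directly (inequality~(\ref{matlabcheck8})): the $\mathcal{C}^0$-part of $\frac{d\hat{F}_T}{d\phi_0}$ is $-2\cos(\phi_0+\gamma)$, nonnegative since $\phi_0+\gamma\in\left(\frac{\pi}{2},\frac{5\pi}{4}\right)$, the $\mathcal{C}^2$-part $\mathcal{C}^2(1-\cos2\phi_0)$ is nonnegative, and only the $\mathcal{C}^1$-coefficient is checked with Matlab; so $\hat{F}_T$ is strictly increasing there and in particular $\frac{d\hat{F}_T}{d\phi_0}\left(\frac{3\pi}{4}\right)>0$. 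On $\left[\frac{3\pi}{4},\pi\right]$ the goal is the implication that wherever $\frac{d\hat{F}_T}{d\phi_0}\le0$ one has $\frac{d^2\hat{F}_T}{d\phi_0^2}<0$ (inequality~(\ref{matlabcheck9}); note that $\frac{d\hat{F}_T}{d\phi_0}\le 0$ somewhere on this interval forces $\mathcal{C}$ to be large when $\gamma$ is small, so the negative $\mathcal{C}^1$- and $\mathcal{C}^2$-contributions dominate the bounded term $2\sin(\phi_0+\gamma)$). Once $\frac{d\hat{F}_T}{d\phi_0}$ reaches $0$ on $\left[\frac{3\pi}{4},\pi\right]$ it is strictly decreasing and cannot return to positive values, so it has at most one zero in $\left(\frac{3\pi}{4},\pi\right)$. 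Since $\frac{d\hat{F}_T}{d\phi_0}\left(\frac{3\pi}{4}\right)>0$: if $\frac{d\hat{F}_T}{d\phi_0}(\pi)<0$ there is exactly one such zero, a critical point $\bar{\phi}_{02}\in\left(\frac{3\pi}{4},\pi\right)$, and $\hat{F}_T$ increases on $\left[\frac{3\pi}{4},\bar{\phi}_{02}\right]$ and decreases on $\left[\bar{\phi}_{02},\pi\right]$ (case~1); if $\frac{d\hat{F}_T}{d\phi_0}(\pi)\ge0$ there is no zero, $\frac{d\hat{F}_T}{d\phi_0}$ stays positive, and $\hat{F}_T$ keeps increasing to $\pi$ (case~2). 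Splicing the three pieces with $\bar{\phi}_{01}$ gives precisely the two monotonicity patterns asserted.

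I expect the main obstacle to be, exactly as in Theorem~\ref{behagthanpiby2}, verifying the three sign facts after the easy $\mathcal{C}^0$- and $\mathcal{C}^2$-parts have been peeled off. The survivors are the $\mathcal{C}^1$-coefficients: genuine two-variable trigonometric expressions in $(\phi_0,\gamma)$ that do not collapse to a single sine and that even vanish at the corner $\phi_0=\gamma=\frac{\pi}{2}$, so they resist a crude term-by-term estimate; moreover the third of them must be controlled only on the region where $\frac{d\hat{F}_T}{d\phi_0}\le0$, using the coupling between $\mathcal{C}$ and $\gamma$ that this region entails. These are precisely what is delegated to the numerical checks~(\ref{matlabcheck7})--(\ref{matlabcheck9}); the rest of the argument is the routine bookkeeping of the intermediate value theorem and endpoint evaluations, carried out as in the $\gamma>\frac{\pi}{2}$ case.
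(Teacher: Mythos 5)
Your proposal is correct and follows essentially the same route as the paper: the same partition of $\left[0,\pi\right]$ into $\left[0,\frac{\pi}{2}\right]$, $\left[\frac{\pi}{2},\frac{3\pi}{4}\right]$ and $\left[\frac{3\pi}{4},\pi\right]$, the same endpoint evaluations of $\frac{d\hat{F}_T}{d\phi_0}$, the same IVT/monotonicity bookkeeping, and the same delegation of the residual $\mathcal{C}^1$-coefficient sign facts to the numerical checks (\ref{matlabcheck7})--(\ref{matlabcheck9}). The only difference is cosmetic: on $\left[\frac{3\pi}{4},\pi\right]$ you fold the paper's two subcases (positivity of $\frac{d\hat{F}_T}{d\phi_0}$ under the condition $\frac{d\hat{F}_T}{d\phi_0}(\pi)\geq 0$, and negativity of $\frac{d^2\hat{F}_T}{d\phi_0^2}$ when $\frac{d\hat{F}_T}{d\phi_0}(\pi)<0$) into a single pointwise implication, which carries the same content and rests on the same numerical verification.
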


\begin{proof}
\begin{enumerate}
\item We first consider $\phi_0 \in \left[0, \frac{\pi}{2}\right]$, with 
\begin{eqnarray*}
\frac{d\hat{F}_T}{d\phi_0}(0) &=& -4\mathcal{C}\cos\frac{\gamma}{2}-2\cos\gamma  < 0,\\
\frac{d \hat{F}_T}{d \phi_0}\left(\frac{\pi}{2}\right) &=& \sqrt{2}\mathcal{C}\cos\frac{\gamma}{2}+\sqrt{2}\mathcal{C}\sin\frac{\gamma}{2}+2\sin\gamma+2\mathcal{C}^2  >0.
\end{eqnarray*}
Moreover, 
\begin{eqnarray}\label{matlabcheck7}
\frac{d^2 \hat{F}_T}{d{\phi^2_0}} &=& \frac{\mathcal{C}}{2}\left(\cos\frac{\gamma}{2}\sin\frac{\phi_0}{2}-\sin\frac{\gamma}{2}\cos\frac{\phi_0}{2}+9\cos\frac{\gamma}{2}\sin\frac{3\phi_0}{2}+9\sin\frac{\gamma}{2}\cos\frac{3\phi_0}{2}\right)\nonumber \\
&&+2\mathcal{C}^2\sin2\phi_0+2(\cos\gamma\sin\phi_0+\sin\gamma\cos\phi_0)\geq0,
\end{eqnarray}

and equality holds only when both $\phi_0$ and $\gamma$ are $0$. Therefore, $\frac{d\hat{F}_T}{d\phi_0}$ increases on $\left[0, \frac{\pi}{2}\right]$. By the intermediate value theorem, there exists $\phi_0^* \in \left(0, \frac{\pi}{2}\right)$ such that $\frac{d\hat{F}_T}{d\phi_0}\left(\phi_0^*\right)=0$. As a result, $\hat{F}_T\left(\phi_0\right)$ decreases then reaches the critical point, and then increases.
\item Next, we consider $\phi_0 \in \left[\frac{\pi}{2}, \frac{3\pi}{4}\right]$, 
\begin{eqnarray}\label{matlabcheck8}
\frac{d\hat{F}_T}{d\phi_0} &=&\mathcal{C}\left(-\cos\frac{\gamma}{2}\cos\frac{\phi_0}{2}-\sin\frac{\gamma}{2}\sin\frac{\phi_0}{2}-3\cos\frac{\gamma}{2}\cos\frac{3\phi_0}{2}+3\sin\frac{\gamma}{2}\sin\frac{3\phi_0}{2}\right)\nonumber\\
&&+\mathcal{C}^2(1-\cos2\phi_0)+2(\sin\gamma\sin\phi_0-\cos\gamma\cos\phi_0)> 0.
\end{eqnarray}
Therefore, $\hat{F}_T(\phi_0)$ increases on $\left[\frac{\pi}{2}, \frac{3\pi}{4}\right]$.
\item Finally, we consider $\phi_0 \in \left[\frac{3\pi}{4}, \pi\right]$. We distinguish the following cases:
\begin{enumerate}
\item If $\frac{d\hat{F}_T}{d\phi_0}(\pi)\geq 0$, we have 
\begin{equation}
\cos\gamma \geq 2\mathcal{C}\sin\frac{\gamma}{2} \quad \Leftrightarrow \quad -\mathcal{C} \geq -\frac{\cos\gamma}{2\sin\frac{\gamma}{2}} \quad \textrm{with} \quad \gamma \neq 0.
\label{thmcon2}
\end{equation}

The inequality in (\ref{thmcon2}) leads to the following result 
\begin{eqnarray}\label{matlabcheck9}
\frac{d\hat{F}_T}{d\phi_0} &=& \mathcal{C}^2-\mathcal{C}^2\cos2\phi_0
-2\cos\gamma\cos\phi_0+2\sin\gamma\sin\phi_0\nonumber\\
&&-\mathcal{C}\left(\cos\frac{\gamma}{2}\cos\frac{\phi_0}{2}+\sin\frac{\gamma}{2}\sin\frac{\phi_0}{2}+3\cos\frac{\gamma}{2}\cos\frac{3\phi_0}{2}-3\sin\frac{\gamma}{2}\sin\frac{3\phi_0}{2}\right)\nonumber\\
&\geq& \mathcal{C}^2(1-\cos2\phi_0)+2(\sin\gamma\sin\phi_0-\cos\gamma\cos\phi_0)\nonumber\\
&&-\frac{\cos\gamma}{2\sin\frac{\gamma}{2}}\left(\cos\frac{\gamma}{2}\cos\frac{\phi_0}{2}+\sin\frac{\gamma}{2}\sin\frac{\phi_0}{2}+3\cos\frac{\gamma}{2}\cos\frac{3\phi_0}{2}-3\sin\frac{\gamma}{2}\sin\frac{3\phi_0}{2}\right)\nonumber\\
&>& 0,
\end{eqnarray}

In addition, if $\gamma = 0$, $\frac{d\hat{F}_T}{d\phi_0}>0$ as well. Hence $\hat{F}_T$ increases on $\left[\frac{3\pi}{4},\pi\right]$.

\item If $\frac{d\hat{F}_T}{d\phi_0}(\pi)<0$, then $\frac{d^2 \hat{F}_T}{d{\phi^2_0}}<0$ on $\left[\frac{3\pi}{4}, \pi \right]$. At the other end point $\frac{d\hat{F}_T}{d\phi_0}\left(\frac{3\pi}{4}\right)>0$. By the intermediate value theorem, $\hat{F}_T$ admits one critical point $\phi_0^* \in \left(\frac{3\pi}{4}, \pi\right)$. Therefore $\hat{F}_T(\phi_0)$ increases and reaches the critical point, then decreases at $\left[\frac{3\pi}{4}, \pi\right]$. 
\end{enumerate}

\end{enumerate}
\end{proof}

\begin{figure}[h!]
\includegraphics[scale = 0.4]{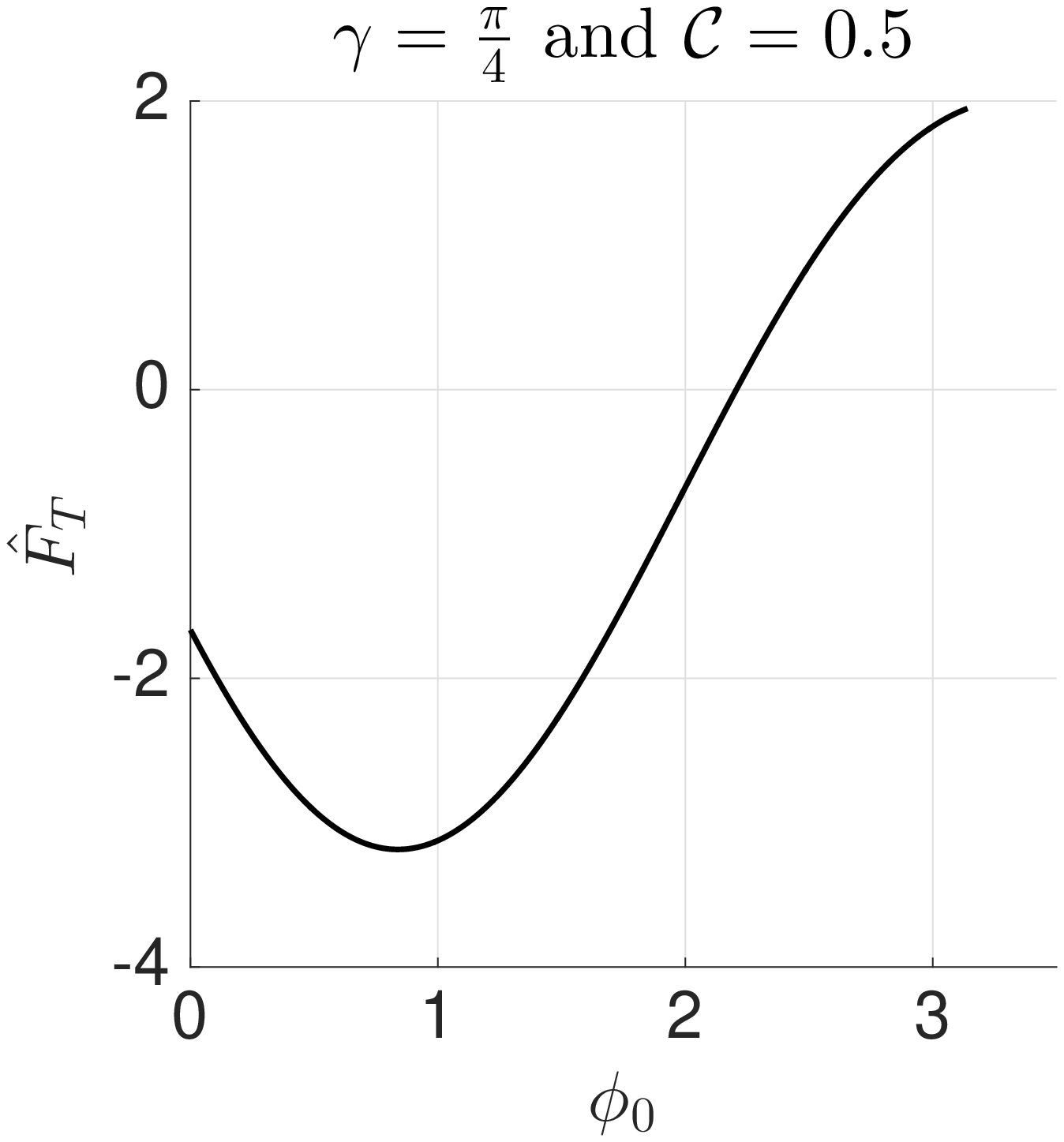}
\caption{$\hat{F}_T$ curve with parameters $\gamma = \frac{\pi}{4}$, $\mathcal{C}=0.5$ and $\mathcal{A}=1$ such that $\frac{d\hat{F}_T}{d\phi_0}(\pi)\geq 0$. }
\label{case3}
\end{figure}

\begin{figure}[h!]
\includegraphics[scale = 0.4]{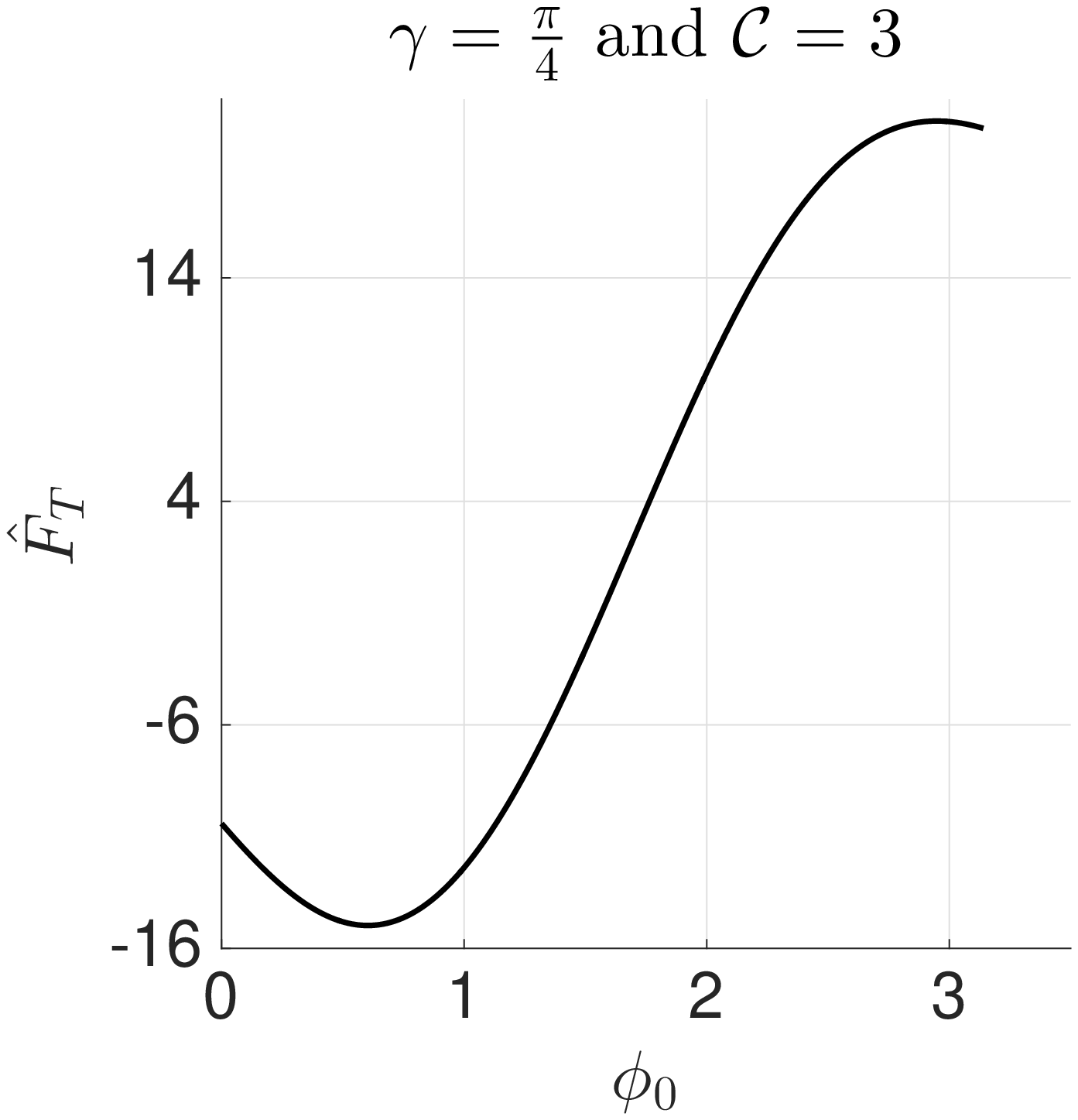}
\caption{$\hat{F}_T$ curve with parameters $\gamma = \frac{\pi}{4}$, $\mathcal{C}=3$ and $\mathcal{A}=1$ such that $\frac{d\hat{F}_T}{d\phi_0}(\pi)<0$. }
\label{case4}
\end{figure}

Based on Theorem \ref{behalthanpiby2}, there are two type of behavior $\hat{F}_T$ for $\gamma < \frac{\pi}{2}$. Two typical examples of those cases are shown in Fig.~\ref{case3} and Fig.~\ref{case4}. The following Theorem shows the number of equilibria and their stability for $\gamma < \frac{\pi}{2}$.

\begin{theorem}\label{eqstablelspiby2}
For $\gamma <\frac{\pi}{2}$, if $\frac{d\hat{F}_T}{d\phi_0}(\pi)\geq 0$, $\hat{F}_T$ admits at most one equilibrium point which is stable. If $\frac{d\hat{F}_T}{d\phi_0}(\pi)< 0$, $\hat{F}_T$ admits at most two equilibrium points, the smaller one is stable and the larger one is unstable. In addition, in the $\frac{d\hat{F}_T}{d\phi_0}(\pi)< 0$ case, if $\hat{F}_T$ admits only one equilibrium point, $\bar{\phi}_0$ is stable if $\frac{d\hat{F}_T}{d\phi_0}(\bar{\phi}_0)>0$ and it is unstable if $\frac{d\hat{F}_T}{d\phi_0}(\bar{\phi}_0)=0$.
\end{theorem}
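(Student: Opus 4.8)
The plan is to follow the template of the proof of Theorem~\ref{eqstablepiby2}: read off the number and location of the zeros of $\hat{F}_T$ from the piecewise-monotone structure supplied by Theorem~\ref{behalthanpiby2}, and then read off stability from the criteria~(\ref{stableeq})--(\ref{unstableeq}). The only facts needed beyond Theorem~\ref{behalthanpiby2} are the endpoint sign $\hat{F}_T(0)=-\mathcal{A}\mathcal{C}^2-2\sin\gamma<0$ from~(\ref{f0value}) and the inequality~(\ref{endpts}) $\hat{F}_T(\pi)>\hat{F}_T(0)$.

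First I would dispose of the subcase $\frac{d\hat{F}_T}{d\phi_0}(\pi)\geq 0$. By part~2 of Theorem~\ref{behalthanpiby2}, $\hat{F}_T$ decreases on $[0,\phi_0^*]$ and increases on $[\phi_0^*,\pi]$ for a single critical point $\phi_0^*\in(0,\pi/2)$. Since $\hat{F}_T(0)<0$ and $\hat{F}_T$ decreases on $[0,\phi_0^*]$, it stays negative there, so every zero of $\hat{F}_T$ must lie on the strictly increasing branch $(\phi_0^*,\pi]$; hence there is at most one, which gives the count. When this zero $\bar{\phi}_0$ exists we have $\frac{d\hat{F}_T}{d\phi_0}(\bar{\phi}_0)\geq 0$; if it is $>0$, then~(\ref{stableeq}) gives stability, and the sole remaining possibility, $\bar{\phi}_0=\pi$ with vanishing derivative, is treated as in the $\mathcal{A}=\mathcal{A}^*$ discussion (the force is positive just to the left of $\pi$), so the configuration is stable.

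Next I would handle $\frac{d\hat{F}_T}{d\phi_0}(\pi)<0$. By part~1 of Theorem~\ref{behalthanpiby2}, $\hat{F}_T$ decreases on $[0,\phi_{01}^*]$, increases on $[\phi_{01}^*,\phi_{02}^*]$, and decreases on $[\phi_{02}^*,\pi]$, with $\phi_{01}^*\in(0,\pi/2)$ and $\phi_{02}^*\in(3\pi/4,\pi)$. As before $\hat{F}_T(0)<0$ and the initial decrease keeps $\hat{F}_T<0$ on $[0,\phi_{01}^*]$, so no zero occurs there; on the strictly increasing branch there is at most one zero $\bar{\phi}_{01}$, and on the final strictly decreasing branch at most one zero $\bar{\phi}_{02}$, giving at most two equilibria with $\bar{\phi}_{01}<\phi_{02}^*<\bar{\phi}_{02}$ when both exist. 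Stability then follows from~(\ref{stableeq})--(\ref{unstableeq}): $\frac{d\hat{F}_T}{d\phi_0}(\bar{\phi}_{01})>0$, so $\bar{\phi}_{01}$ is stable, and $\frac{d\hat{F}_T}{d\phi_0}(\bar{\phi}_{02})<0$, so $\bar{\phi}_{02}$ is unstable. A single equilibrium is then either $\bar{\phi}_{01}$ (derivative $>0$, stable) or the tangential case $\hat{F}_T(\phi_{02}^*)=0$ (derivative $=0$, unstable by the left/right sign argument already used in Theorem~\ref{eqstablepiby2}).

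I do not expect a genuine obstacle here: everything delicate — the sign of $\hat{F}_T$ near $\phi_0=0$, the exact list of monotone pieces, and the location of the interior maximum in the $\frac{d\hat{F}_T}{d\phi_0}(\pi)<0$ subcase — has already been established in Theorem~\ref{behalthanpiby2} and~(\ref{endpts}), so the present proof reduces to counting sign changes of a piecewise-monotone function and invoking~(\ref{stableeq})--(\ref{unstableeq}). The only point requiring care is the precise phrasing of the degenerate single-equilibrium cases, which must be stated separately as above.
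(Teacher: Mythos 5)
Your proposal is correct and follows essentially the same route as the paper: the paper's own proof is just a one-line citation of Eq.~(\ref{endpts}), Theorem~\ref{behalthanpiby2} and the criteria (\ref{stableeq})--(\ref{unstableeq}), ``similar to Theorem~\ref{eqstablepiby2},'' and your write-up simply fills in the sign-counting details of that argument. One small correction in your degenerate boundary sub-case: if $\hat{F}_T$ increases on $[\phi_0^*,\pi]$ with $\hat{F}_T(\pi)=0$, then $\hat{F}_T$ is \emph{negative} (not positive) just to the left of $\pi$; since $\sign(dE_T/d\phi_0)=\sign(F_T)$, this makes $E_T$ decreasing as $\phi_0\to\pi^-$, so $\phi_0=\pi$ is a boundary minimum of the energy and the conclusion ``stable'' still stands --- but your parenthetical, taken at face value, would have argued for the opposite.
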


\begin{proof}
The result comes from Eq.~(\ref{endpts}), Theorem~\ref{behalthanpiby2} and the criteria (\ref{stableeq}) and (\ref{unstableeq}). For the $\frac{d\hat{F}_T}{d\phi_0}(\pi)< 0$ case, it is similar to Theorem~\ref{eqstablepiby2}.
\end{proof}

Therefore, we conclude that, for arbitrary $\gamma$, $\hat{F}_T$ admits at most two equilibrium points, the smaller one is stable and the larger one is unstable. Moreover, if $\hat{F}_T$ has only one equilibrium point, it is unstable if $\frac{d\hat{F}_T}{d\phi_0}(\bar{\phi}_0)=0$, where $\bar{\phi}_0<\pi$, otherwise it is stable.

\begin{remark}\label{rhomlthan0}
Treinen\cite{treinen16} also studied the $\rho_m <0$ case (the density of the cylinder is less than the density of the air). If we allow $\rho_m<0$ ($\mathcal{A}<0$), the $\hat{F}_T$ curve can be shifted up, therefore, the first critical point of $\hat{F}_T$ has to be taken into consideration. Based on Theorem~\ref{behaeqpiby2}, Theorem~\ref{behagthanpiby2} and Theorem~\ref{behalthanpiby2}, if $\hat{F}_T$ has a critical point $\phi_0^{*'}$ in $\left(0,\frac{\pi}{2}\right)$, there is a possibility that $\hat{F}_T$ admits two equilibrium points, one is smaller than $\phi_0^{*'}$, the other is larger than $\phi_0^{*'}$. According to the criteria (\ref{stableeq}) and (\ref{unstableeq}), the smaller one is unstable and the larger one is stable. Thus, we agree with Treinen's conjecture $1$ for the $\rho_m<0$ case.

\end{remark}

\subsection{Asymptotic Behavior of $\mathcal{A}^*$ and $\phi_0^*$ for $\gamma = \frac{\pi}{2}$}\label{secasymp}
As discussed, $\phi_0^*$ and $\mathcal{A}^*$ in Eq.~(\ref{asymeqn}) have to be found numerically. But for $\gamma = \frac{\pi}{2}$, there exists asymptotic approximations of $\phi_0^*(\mathcal{C})$ and $\mathcal{A}^*(\mathcal{C})$ as $\mathcal{C} \rightarrow \infty$ and $\mathcal{C} \rightarrow 0$. In this section, we find these asymptotic series by applying the real analytic implicit function theorem\cite{perturbation} to the following equation:
\begin{equation}
\frac{d\hat{F}_T}{d\phi_0}(\phi_0^*(\mathcal{C});\mathcal{C}) = 0,\quad \phi_0^*(\mathcal{C})>\frac{\pi}{2}.
\label{asymeqn2}
\end{equation}
Then $\hat{F}_T\left(\phi_0^*(\mathcal{C}); \mathcal{A}^*(\mathcal{C})\right)=0$ can be solved explicitly for $\mathcal{A}^*(\mathcal{C})$. See Ref.~\onlinecite{hanzhe} for more details.

\subsubsection{As $\mathcal{C} \rightarrow 0$}

Since $\frac{d}{d\phi_0}\hat{F}_T(\pi;0)=0$ and $\frac{d^2}{d\phi_0^2}\hat{F}_T(\pi;0)\neq 0$, the implicit function theorem guarantees the existence of an analytic function $\phi_0^*(\mathcal{C})$ in terms of $\mathcal{C}$ near $\mathcal{C} = 0$ satisfying $\frac{d}{d\phi_0}\hat{F}_T\left(\phi_0^*(\mathcal{C});\mathcal{C}\right)=0$. Consider the regular asymptotic series $\phi_0^* = \pi+a_1{\mathcal{C}}+a_2{\mathcal{C}}^2+...$. We obtain 
\begin{eqnarray}
{\mathcal{A}}^* &=& \frac{2}{{\mathcal{C}}^2}+2+\pi-2\sqrt{2}{\mathcal{C}}+{\mathcal{O}}({\mathcal{C}}^2),\\
\phi_0^* &=& \pi-\sqrt{2}{\mathcal{C}}+2{\mathcal{C}}^2-\frac{7}{12}\sqrt{2}{\mathcal{C}}^3+{\mathcal{O}}({\mathcal{C}}^4).
\end{eqnarray}

\subsubsection{As $\mathcal{C} \rightarrow \infty$}

Since a regular asymptotic series doesn't work in this case, we modify the power series $\phi_0^*(\mathcal{D}) = \sum_{n=0}^{\infty}a_n\mathcal{D}^n$ to satisfy the implicit function theorem about $\mathcal{D}=0$, where $\mathcal{D} = \frac{1}{\sqrt{\mathcal{C}}}$, so that $\mathcal{D}\rightarrow 0$ as $\mathcal{C}\rightarrow \infty$. We obtain  
\begin{eqnarray}
{\mathcal{A}}^* &=& \pi+\frac{\frac{1}{3}2^{\frac{11}{4}}}{{\mathcal{C}}^{\frac{3}{2}}}+{\mathcal{O}}({\mathcal{C}}^{-\frac{1}{2}}),\\
\phi_0^* &=& \pi-\frac{2^{\frac{1}{4}}}{{\mathcal{C}}^{\frac{1}{2}}}+\frac{2^{-\frac{1}{2}}}{{\mathcal{C}}}+\frac{\frac{7}{3}2^{-\frac{13}{4}}}{{\mathcal{C}}^{\frac{3}{2}}}+{\mathcal{O}}({\mathcal{C}}^{-2}).
\end{eqnarray} 

The performance of the asymptotic series is shown in Fig.~\ref{asym}. 
\begin{figure}[h!]
\centering
     \subfloat[]{\includegraphics[scale = 0.3]{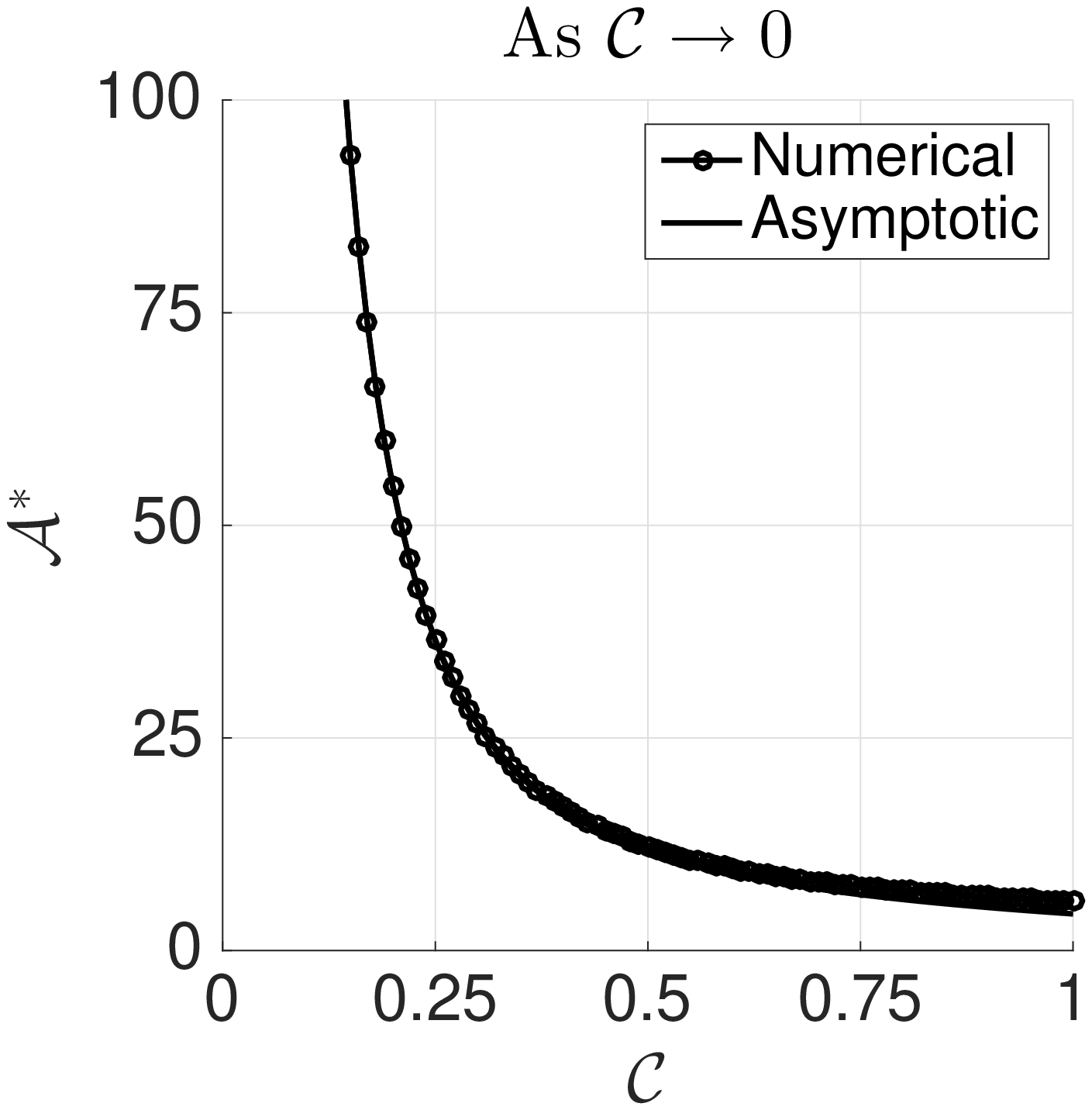}}
     \hspace{1cm}
     \subfloat[]{\includegraphics[scale = 0.3]{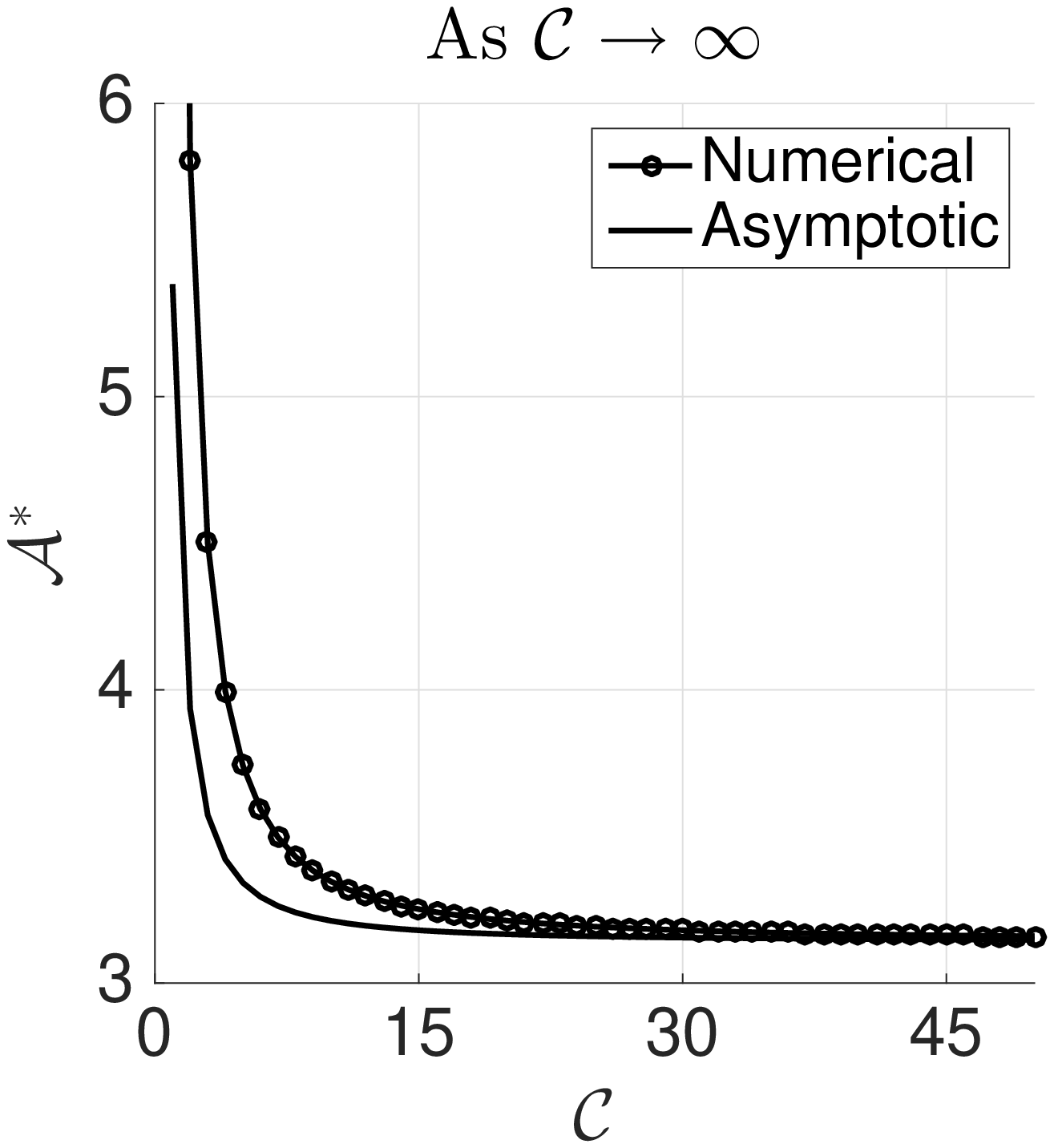}}
\caption{The performance of asymptotic series compared with numerical results.}
\label{asym}
\end{figure}

\section{Illustrating the Number of Equilibria}\label{noe}
There is a possibility that fluid interfaces on the two sides of the cylinder intersect, invalidating our model. Bhatnagar and Finn\cite{MR2259294} showed that in their example (see Remark~\ref{BFexample}) both configurations are in the non-intersection region. We plot $\mathcal{C}$ vs $\mathcal{A}$ regions with different contact angles $\gamma$ to show the number of equilibria and their validity.

\subsection{Intersection Condition}\label{intersectioncase}
We first consider the $\psi<0$ case. We find that the intersection of the fluid interfaces happens if the fluid interfaces are non-graph and the horizontal distance of the fluid interface on the right attains a non-positive value (see Fig.~\ref{inter}). We summarize the following three conditions for the intersection of the fluid interfaces.
\begin{enumerate}
\item $-\pi \leq \psi_0 \leq -\frac{\pi}{2} \quad \Leftrightarrow \quad  0\leq\phi_0+\gamma \leq \frac{\pi}{2}$.
\item $h>a \quad \Leftrightarrow \quad \cos\phi_0+\frac{2}{\mathcal{C}}\cos(\frac{\phi_0+\gamma}{2})>1$.
\item $x(-\frac{\pi}{2})\leq0 \quad \Leftrightarrow \quad \sqrt{2}+\ln\left(\tan\frac{\pi}{8}\right)-2\sin\left(\frac{\phi_0+\gamma}{2}\right)-\ln\left[-\tan\left(\frac{\phi_0+\gamma-\pi}{4}\right)\right] \geq \mathcal{C}\sin\phi_0$.
\end{enumerate}

\begin{figure}[h!]
\includegraphics[scale = 0.35]{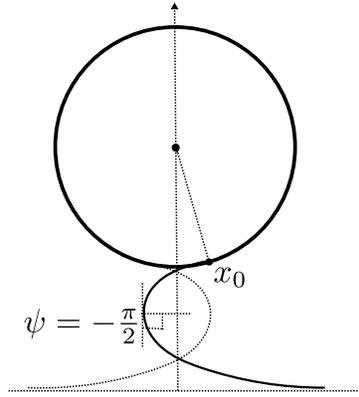}
\caption{The intersection of the fluid interfaces for the $\psi<0$ case.}
\label{inter}
\end{figure}

The conditions are similar for the $\psi>0$ case. We encode the intersection conditions by defining an intersection function
\begin{equation}
I(\phi_0,\mathcal{C})= \mathcal{C}\sin\phi_0-\sqrt{2}-\ln\left(\tan\frac{\pi}{8}\right)+2\sin\left(\frac{\phi_0+\gamma}{2}\right)+\ln\left\vert\tan\left(\frac{\phi_0+\gamma-\pi}{4}\right)\right\vert.
\label{intersectioneqn}
\end{equation}

For the $\psi<0$ case, the intersection happens when $I(\phi_0,\mathcal{C})\leq 0$ for $\gamma \in \left[0,\frac{\pi}{2}\right]$, $\phi_0 \in \left[0,\frac{\pi}{2}-\gamma\right]$. For the $\psi>0$ case, the intersection happens when $I(\phi_0,\mathcal{C})\leq 0$ for $\gamma \in \left[\frac{\pi}{2},\pi\right]$, $\phi_0 \in \left[\frac{3\pi}{2}-\gamma,\pi\right]$. In addition, $I(\phi_0,\mathcal{C})=0$ is the boundary curve between the intersection region and the non-intersection region. We have that the pair $\left(\phi_0,\mathcal{C}\right)$ lies in the non-intersection region if and only if $I(\phi_0,\mathcal{C})>0$. In Fig.~\ref{interpiby4} and Fig.~\ref{inter3piby4}, we give two examples of the intersection of fluid interfaces in the shaded region. 

We would like to know whether or not the equilibrium points lie in the intersection region. The following Theorem~\ref{eqnoninterpileqpiby2} shows the equilibrium point(s) lie in the non-intersection region for $\gamma \leq \frac{\pi}{2}$. While, for $\gamma >\frac{\pi}{2}$, Theorem~\ref{eqnonintergpiby2} the stable equilibrium point always lies in the non-intersection region. But there exists some non-physical configurations for unstable equilibrium points (see discussion of invalid equilibrium region in Sec~\ref{exfour} and Sec~\ref{exfive}).  
\begin{figure}[h!]
\includegraphics[scale = 0.4]{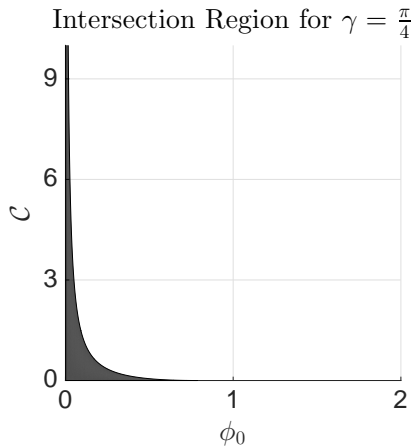}
\caption{$\gamma = \frac{\pi}{4}$, $\phi_0 \in \left[0,\frac{\pi}{4}\right]$.}
\label{interpiby4}
\end{figure}

\begin{figure}[h!]
\includegraphics[scale = 0.4]{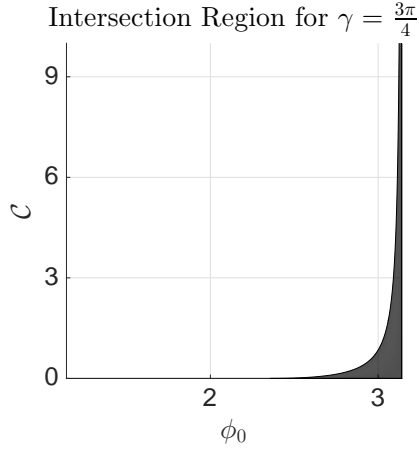}
\caption{$\gamma = \frac{3\pi}{4}$, $\phi_0 \in \left[\frac{3\pi}{4},\pi\right]$.}
\label{inter3piby4}
\end{figure}

\begin{theorem}\label{eqnoninterpileqpiby2}
The equilibrium point(s) lie in the non-intersection region for $\gamma\leq\frac{\pi}{2}$.
\end{theorem}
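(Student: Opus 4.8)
The plan is to prove the stronger statement that $\hat F_T$ is \emph{strictly negative on the whole intersection region}; since equilibria are exactly the zeros of $\hat F_T$, this forces every equilibrium point into the non-intersection region. For $\gamma\le\frac{\pi}{2}$ the relevant regime is the $\psi<0$ one of Sec.~\ref{intersectioncase}, so the intersection region is contained in $S_\gamma:=\{(\phi_0,\mathcal C):0\le\phi_0\le\frac{\pi}{2}-\gamma,\ I(\phi_0,\mathcal C)\le0\}$, with $I$ as in Eq.~(\ref{intersectioneqn}); it suffices to show $\hat F_T<0$ on $S_\gamma$. At $\gamma=\frac{\pi}{2}$ this set degenerates to $\{\phi_0=0\}$, and $\phi_0=0$ is not an equilibrium by Eq.~(\ref{f0value}); so I take $\gamma\in[0,\frac{\pi}{2})$ and, again excluding $\phi_0=0$, work on $0<\phi_0\le\frac{\pi}{2}-\gamma$.

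First I would split $\hat F_T(\phi_0)=G(\phi_0,\mathcal C)-\mathcal A\mathcal C^2$ with
\[
G(\phi_0,\mathcal C)=\Big(\phi_0-\tfrac12\sin 2\phi_0\Big)\mathcal C^2-4\cos\!\Big(\tfrac{\phi_0+\gamma}{2}\Big)\sin\phi_0\,\mathcal C-2\sin(\phi_0+\gamma).
\]
On the range above $a:=\phi_0-\tfrac12\sin 2\phi_0>0$ (since $\sin x<x$ for $x>0$), $b:=4\cos(\tfrac{\phi_0+\gamma}{2})\sin\phi_0>0$ (since $\tfrac{\phi_0+\gamma}{2}\le\tfrac{\pi}{4}$) and $c:=2\sin(\phi_0+\gamma)\ge0$, so the quadratic $G=a\mathcal C^2-b\mathcal C-c$ in $\mathcal C$ has a unique positive root $\mathcal C_+(\phi_0,\gamma)=\frac{b+\sqrt{b^2+4ac}}{2a}$, with $G\le0$ exactly for $0<\mathcal C\le\mathcal C_+$. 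Since $\mathcal A\mathcal C^2>0$, to get $\hat F_T<0$ on $S_\gamma$ it is enough to show $\mathcal C\le\mathcal C_+(\phi_0,\gamma)$ on $S_\gamma$. Now $I(\phi_0,\mathcal C)\le0$ is exactly $\mathcal C\sin\phi_0\le R(\phi_0,\gamma)$ with $R(\phi_0,\gamma):=\sqrt2+\ln\tan\tfrac{\pi}{8}-2\sin(\tfrac{\phi_0+\gamma}{2})-\ln\!\big|\tan(\tfrac{\phi_0+\gamma-\pi}{4})\big|$; where $R\le0$ there are no admissible points, and where $R>0$ this gives $\mathcal C\le R(\phi_0,\gamma)/\sin\phi_0$. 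Hence the theorem reduces to the single inequality
\[
\frac{R(\phi_0,\gamma)}{\sin\phi_0}\ \le\ \mathcal C_+(\phi_0,\gamma)\qquad\Longleftrightarrow\qquad I\big(\phi_0,\mathcal C_+(\phi_0,\gamma)\big)\ge0
\]
for all $\gamma\in[0,\frac{\pi}{2})$ and $0<\phi_0\le\frac{\pi}{2}-\gamma$ (the two forms being equivalent after multiplying by $\sin\phi_0>0$).

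I expect this last, transcendental two-variable inequality to be the main obstacle, and — in the spirit of Eqs.~(\ref{matlabcheck2})--(\ref{matlabcheck9}) in Sec.~\ref{stabe} — I would verify it numerically with Matlab rather than in closed form. Two boundary regimes, however, can be handled by hand and indicate the inequality holds with ample room: along the line $\phi_0+\gamma=\frac{\pi}{2}$ the terms $\ln\tan\frac{\pi}{8}$, $\ln|\tan(\frac{\phi_0+\gamma-\pi}{4})|$ and $2\sin(\frac{\phi_0+\gamma}{2})$ combine with $\sqrt2$ to give $R\equiv0$, so the inequality is trivial there; and as $\phi_0\to0^+$ one has $a\sim\frac23\phi_0^3$, $b\sim4\cos\frac{\gamma}{2}\,\phi_0$, hence $\mathcal C_+\sim 6\cos\frac{\gamma}{2}/\phi_0^2$ and $\mathcal C_+\sin\phi_0\to\infty$ while $R$ stays bounded. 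Granting the inequality, $\mathcal C\le\mathcal C_+$ on $S_\gamma$, hence $G\le0$ and $\hat F_T=G-\mathcal A\mathcal C^2<0$ there, so $S_\gamma$ contains no equilibrium, which is the claim. (As a consistency check, an equilibrium with $\phi_0>\frac{\pi}{2}-\gamma$ lies outside the range of $S_\gamma$ and is non-intersecting regardless; by Theorems~\ref{behapipy2} and \ref{behalthanpiby2} the larger equilibrium $\bar\phi_{02}$, when it exists, lies beyond the second critical point of $\hat F_T$, which exceeds $\frac{\pi}{2}\ge\frac{\pi}{2}-\gamma$, so it is really only a smaller equilibrium lying in the strip $\phi_0\le\frac{\pi}{2}-\gamma$ for which the argument above is needed.)
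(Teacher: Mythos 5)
Your proposal is correct and is essentially the paper's own argument in contrapositive form: your quadratic $G=a\mathcal C^2-b\mathcal C-c$ and its positive root $\mathcal C_+=\frac{b+\sqrt{b^2+4ac}}{2a}$ are exactly the paper's Eq.~(\ref{quadraC1}) and Eq.~(\ref{valc1}) (with $w=a$, $p=-b$, $q=-c$, i.e.\ the equilibrium condition at $\mathcal A=0$), and both arguments reduce to the same numerically verified inequality $I(\phi_0,\mathcal C_+(\phi_0,\gamma))>0$ on $\phi_0\in(0,\tfrac{\pi}{2}-\gamma]$, $\gamma\in[0,\tfrac{\pi}{2})$, as in Eq.~(\ref{intercheck1}). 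The handling of $\gamma=\tfrac{\pi}{2}$ and the monotonicity of $I$ in $\mathcal C$ match the paper as well.
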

\begin{proof}
\begin{enumerate}
\item When $\gamma = \frac{\pi}{2}$, there is always no intersection for $\phi_0 \in (0,\pi)$ since the interface is a graph ($\phi_0=0,\pi$ are not equilibrium points, therefore they are not taken into account. 
  
\item When $\gamma <\frac{\pi}{2}$, we have the intersection happens when $I(\phi_0,\mathcal{C})\leq 0$ for $\phi_0\in \left[0,\frac{\pi}{2}-\gamma\right]$. Suppose there exist an equilibrium point $\bar{\phi}_0\in (0,\frac{\pi}{2}-\gamma]$ ($\phi_0 = 0$ is not an equilibrium point). The smallest $\bar{\phi}_0$ appears when $\mathcal{A}=0$, denoted as $\bar{\phi}_{0\min}$.
\begin{eqnarray}\label{minphi0}
\hat{F}_T(\bar{\phi}_{0\min}) &=& -2\sin(\bar{\phi}_{0\min}+\gamma)-4\mathcal{C}\cos\left(\frac{\bar{\phi}_{0\min}+\gamma}{2}\right)\sin(\bar{\phi}_{0\min})\nonumber\\
&&-\frac{1}{2}\mathcal{C}^2\sin(2\bar{\phi}_{0\min})+\mathcal{C}^2\bar{\phi}_{0\min} = 0.
\end{eqnarray}
We arrange Eq.~(\ref{minphi0}) to obtain a quadratic equation of $\mathcal{C}$.
\begin{equation}\label{quadraC1}
w\mathcal{C}^2+p\mathcal{C}+q=0,
\end{equation}
where $w=\bar{\phi}_{0\min}-\frac{1}{2}\sin(2\bar{\phi}_{0\min})>0$, $p=-4\cos\left(\frac{\bar{\phi}_{0\min}+\gamma}{2}\right)\sin(\bar{\phi}_{0\min})$ and $q = -2\sin(\bar{\phi}_{0\min}+\gamma)<0$. Thus, $\mathcal{C}$ can be solved, 

\begin{equation}\label{valc1}
\mathcal{C} = \frac{-p+\sqrt{p^2-4wq}}{2w},
\end{equation}
where $``+"$ is valid since $\mathcal{C}>0$.

We apply Eq.~({\ref{valc1}}) to the intersection function Eq.~(\ref{intersectioneqn}) and obtain
\begin{equation}\label{intercheck1}
I(\bar{\phi}_{0\min},\mathcal{C}) >0,
\end{equation}
for $\bar{\phi}_{0\min}\in (0,\frac{\pi}{2}-\gamma]$ and $\gamma \in [0,\frac{\pi}{2})$. Eq.~(\ref{intercheck1}) is evaluated numerically with Matlab. 

Thus, for $\gamma \leq \frac{\pi}{2}$, the equilibrium point(s) lie in the non-intersection region.    
\end{enumerate}
\end{proof}

\begin{theorem}\label{eqnonintergpiby2}
For $\gamma > \frac{\pi}{2}$, the stable equilibrium point always lies in the non-intersection region.
\end{theorem}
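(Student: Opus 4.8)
The plan is to combine the location of $\bar{\phi}_{01}$ supplied by Theorems~\ref{behagthanpiby2} and~\ref{eqstablegtpiby2} with an explicit evaluation of the intersection function $I$ at the boundary of the intersection region, and then to reduce the residual estimate to a numerical check in the spirit of Theorem~\ref{eqnoninterpileqpiby2}. By Theorem~\ref{behagthanpiby2}, for $\gamma>\tfrac{\pi}{2}$ the curve $\hat{F}_T(\,\cdot\,;\mathcal{C})$ has a single critical point $\phi_0^{*}=\phi_0^{*}(\mathcal{C},\gamma)\in(\tfrac{\pi}{2},\pi)$, it is a maximum, and $\tfrac{d\hat{F}_T}{d\phi_0}$ is strictly decreasing on $[\tfrac{\pi}{2},\pi]$; hence on $(\tfrac{\pi}{2},\pi)$ one has $\tfrac{d\hat{F}_T}{d\phi_0}(\phi_0;\mathcal{C},\gamma)\ge 0\iff\phi_0\le\phi_0^{*}$, and $\phi_0^{*}$ is independent of $\mathcal{A}$. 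Since the stable equilibrium lies on the rising branch (stability criterion~(\ref{stableeq})), $\bar{\phi}_{01}\in(0,\pi)$ satisfies $\hat{F}_T(\bar{\phi}_{01})=0$ and $\tfrac{d\hat{F}_T}{d\phi_0}(\bar{\phi}_{01})\ge 0$, so in particular $\bar{\phi}_{01}\le\phi_0^{*}$.

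Next I would pin down the geometry of the intersection region. For $\gamma>\tfrac{\pi}{2}$, intersection requires $\phi_0\in[\tfrac{3\pi}{2}-\gamma,\pi]$ together with $I(\phi_0,\mathcal{C})\le 0$. Substituting $\phi_0=\tfrac{3\pi}{2}-\gamma$ into~(\ref{intersectioneqn}), where $\psi_0=\tfrac{\pi}{2}$ so that $\tan\tfrac{\phi_0+\gamma-\pi}{4}=\tan\tfrac{\pi}{8}$ and $\sin\tfrac{\phi_0+\gamma}{2}=\sin\tfrac{3\pi}{4}$, makes all the constant terms cancel and leaves $I\!\left(\tfrac{3\pi}{2}-\gamma,\mathcal{C}\right)=-\mathcal{C}\cos\gamma>0$. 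Thus the left endpoint of the potentially intersecting $\phi_0$-range is always on the non-intersection side. A short computation gives $\tfrac{d\hat{F}_T}{d\phi_0}\!\left(\tfrac{3\pi}{2}-\gamma;\mathcal{C},\gamma\right)=2\mathcal{C}^2\cos^2\gamma-\sqrt{2}\,\mathcal{C}\,(2\sin\gamma+\cos\gamma)$; when this is $\le 0$, the monotonicity of $\tfrac{d\hat{F}_T}{d\phi_0}$ forces $\phi_0^{*}\le\tfrac{3\pi}{2}-\gamma$, so $\bar{\phi}_{01}\le\phi_0^{*}\le\tfrac{3\pi}{2}-\gamma$ cannot lie in the intersection region and there is nothing more to prove.

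The remaining case is $\tfrac{d\hat{F}_T}{d\phi_0}\!\left(\tfrac{3\pi}{2}-\gamma;\mathcal{C},\gamma\right)>0$, i.e.\ $\phi_0^{*}>\tfrac{3\pi}{2}-\gamma$, where $\bar{\phi}_{01}$ may lie anywhere in $[\tfrac{3\pi}{2}-\gamma,\phi_0^{*}]$ as $\mathcal{A}$ varies (and if it falls below $\tfrac{3\pi}{2}-\gamma$ it is again automatically safe). It therefore suffices to show $I(\phi_0,\mathcal{C})>0$ for every $\phi_0$ in that interval. Since on $(\tfrac{\pi}{2},\pi)$ the constraint $\phi_0\le\phi_0^{*}(\mathcal{C},\gamma)$ is exactly the explicit inequality $\tfrac{d\hat{F}_T}{d\phi_0}(\phi_0;\mathcal{C},\gamma)\ge 0$, this amounts to
\[
I(\phi_0,\mathcal{C})>0 \qquad\text{whenever}\qquad \tfrac{\pi}{2}<\gamma<\pi,\ \ \mathcal{C}>0,\ \ \tfrac{3\pi}{2}-\gamma\le\phi_0<\pi,\ \ \tfrac{d\hat{F}_T}{d\phi_0}(\phi_0;\mathcal{C},\gamma)\ge 0.
\]
As in the proof of Theorem~\ref{eqnoninterpileqpiby2}, I would cut a dimension by parametrizing the stability boundary $\tfrac{d\hat{F}_T}{d\phi_0}=0$: it is a quadratic in $\mathcal{C}$ whose positive root $\mathcal{C}=\mathcal{C}(\phi_0,\gamma)$ is explicit, exactly as in~(\ref{quadraC1})--(\ref{valc1}); as $\phi_0\to\pi^{-}$ this root tends to $+\infty$, so the term $\mathcal{C}\sin\phi_0$ in $I$ eventually dominates. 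The verification then reduces to an inequality over a bounded region of the parameters $(\phi_0,\gamma)$, with $\phi_0$ swept from $\tfrac{3\pi}{2}-\gamma$ up to $\phi_0^{*}$, which is checked with Matlab just as the inequalities~(\ref{matlabcheck2})--(\ref{matlabcheck9}) were.

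The main obstacle is precisely this last step. Because $\phi_0^{*}$ has no closed form, the argument cannot use an a priori explicit bound for $\bar{\phi}_{01}$ but must carry the stability inequality $\tfrac{d\hat{F}_T}{d\phi_0}(\bar{\phi}_{01})\ge 0$ into the estimate; moreover $I>0$ must hold on the \emph{entire} constrained locus, not merely at its endpoints — note that $I(\pi,\mathcal{C})<0$ for $\gamma$ near $\pi$, so the assertion would be false without the stability constraint, and what saves it is that $\tfrac{d\hat{F}_T}{d\phi_0}(\phi_0;\mathcal{C},\gamma)\ge 0$ keeps $\phi_0$ bounded away from $\pi$ by an amount that outpaces the decay of $\mathcal{C}\sin\phi_0$ (one expects $\phi_0^{*}=\pi-O(\mathcal{C}^{-1/2})$, so that $\mathcal{C}\sin\phi_0^{*}=O(\mathcal{C}^{1/2})\to\infty$). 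Making the $\mathcal{C}\to\infty$ regime rigorous — e.g.\ through the asymptotics of $\phi_0^{*}(\mathcal{C},\gamma)$ along the lines of Sec.~\ref{secasymp} — so that the Matlab check is confined to a bounded parameter set, is the delicate part.
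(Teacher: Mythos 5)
Your proposal is correct and follows essentially the same route as the paper: locate the stable equilibrium at or below the critical point $\phi_0^*>\tfrac{\pi}{2}$, parametrize the locus $\tfrac{d\hat{F}_T}{d\phi_0}=0$ as a quadratic in $\mathcal{C}$ with explicit positive root, and verify $I>0$ there numerically. The only point worth making explicit is the monotonicity that transfers the boundary check to the whole constrained set (the paper uses that $I$ is strictly decreasing in $\phi_0$ on $\left[\tfrac{3\pi}{2}-\gamma,\pi\right)$; equivalently one can use that $I$ is increasing in $\mathcal{C}$), which your dimension-reduction tacitly assumes.
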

\begin{proof}
For $\gamma \in (\frac{\pi}{2},\pi]$, the intersection happens when $I(\phi_0,\mathcal{C})\leq 0, \phi_0 \in \left[\frac{3\pi}{2}-\gamma,\pi\right]$. By Theorem~\ref{behagthanpiby2}, $\hat{F}_T$ has a critical point $\phi^*_0>\frac{\pi}{2}$, thus 
\begin{eqnarray}\label{totalcri}
\frac{d\hat{F}_T}{d\phi_0}(\phi^*_0) &=& \mathcal{C}\left(2\sin\left(\frac{\phi^*_0+\gamma}{2}\right)\sin\phi^*_0-4\cos\left(\frac{\phi^*_0+\gamma}{2}\right)\cos\phi^*_0\right) \nonumber \\
&&+(1-\cos2\phi^*_0)\mathcal{C}^2-2\cos\gamma\cos\phi^*_0+2\sin\gamma\sin\phi^*_0 = 0.
\end{eqnarray}

We arrange Eq.~(\ref{totalcri}) to obtain a quadratic equation of $\mathcal{C}$.
\begin{equation}\label{quadra2}
\bar{w} \mathcal{C}^2 + \bar{p} \mathcal{C}+\bar{q} = 0,
\end{equation}
where $\bar{w}=1-\cos2\phi^*_0>0$, $\bar{q}=-2\cos(\phi^*_0+\gamma)<0$ for $\phi^*_0\in (\frac{3\pi}{2}-\gamma,\pi)$, $\gamma \in (\frac{\pi}{2},\pi]$ and $\bar{p} = 2\sin\left(\frac{\phi^*_0+\gamma}{2}\right)\sin\phi^*_0-4\cos\left(\frac{\phi^*_0+\gamma}{2}\right)\cos\phi^*_0$. We are going to check whether the critical point $\phi^*_0$ lies in the intersection region. If we can show $I(\phi^*_0,\mathcal{C})>0$ for $\phi^*_0\in (\frac{3\pi}{2}-\gamma,\pi)$, $\gamma \in (\frac{\pi}{2},\pi]$, we can conclude the stable equilibrium point lies in the non-intersection region. This follows since $\frac{\pi}{2}\leq \phi_{01}<\phi^*_0$ where $\phi_{01}$ is the stable equilibrium point and $I(\phi_0,\mathcal{C})$ is strictly decreasing on $\phi_0$ for $\phi_0\in [\frac{3\pi}{2}-\gamma,\pi)$, $\gamma \geq \frac{\pi}{2}$. This implies $I(\phi_{01},\mathcal{C})>I(\phi^*_0,\mathcal{C})>0$.

From Eq.~(\ref{quadra2}), we have
\begin{equation}\label{valc2}
\mathcal{C} = \frac{-\bar{p}+\sqrt{\bar{p}^2-4\bar{w}\bar{q}}}{2\bar{w}},
\end{equation}
where $``+"$ is valid, since $\mathcal{C}>0$.

We apply Eq.~({\ref{valc2}}) to the intersection function Eq.~(\ref{intersectioneqn}) and obtain
\begin{equation}\label{intercheck2}
I(\phi^*_0,\mathcal{C}) >0,
\end{equation}
for $\phi^*_0\in (\frac{3\pi}{2}-\gamma,\pi)$, $\gamma \in (\frac{\pi}{2},\pi]$. Eq.~(\ref{intercheck2}) is evaluated numerically with Matlab. 

Thus, the stable equilibrium point always lies in the non-intersection region.

\end{proof}


\subsection{$\mathcal{C}$ vs $\mathcal{A}$: Regions with Different Numbers of Equilibria}\label{AC}

We study the number of equilibria in Sec.~\ref{stabe}. While in this section, we would like to discuss the number of equilibria in consideration of the intersection condition. Since $\mathcal{A}$, $\mathcal{C}$ and contact angle $\gamma$ will affect the number of equilibria of our floating system, the $\mathcal{C}$ vs $\mathcal{A}$ region will be helpful and clear.  Examples with typical contact angles (e.g. $\gamma = 0, \frac{\pi}{4}, \frac{\pi}{2}, \frac{3\pi}{4}$ and $\pi$) will be given. In the $\mathcal{C}$ vs $\mathcal{A}$ plane, we define $\mathcal{C}_i$ as the boundary curves between the regions with different number of equilibria. According to the discussion of the behavior of the $\hat{F}_T$ curve in Sec.~\ref{stabe}, the sign of $\hat{F}_T(\pi)$ and the sign of $\hat{F}_T(\phi_0^*)$, $\phi_0^*>\frac{\pi}{2}$, play important roles in determining the number of equilibria. An equilibrium $\bar{\phi}_0$ is in the non-intersection region when $I(\bar{\phi}_0,\mathcal{C})>0$. The boundary curves $\mathcal{C}_i$ can be expressed as follows
\begin{enumerate}
\item $\mathcal{C}_1: \hat{F}_T(\pi)=0 \quad \Leftrightarrow \quad  (\mathcal{A}-\pi)\mathcal{C}^2 = 2\sin\gamma$. 
\item $\mathcal{C}_2: \hat{F}_T(\phi_0^*)=0$, where $\phi_0^* >\frac{\pi}{2}$ satisfying $\frac{d \hat{F}_T}{d \phi_0}(\phi_0^*)=0$. 
\item $\mathcal{C}_3: \hat{F}_T(\bar{\phi}_{02})=0$, where $\bar{\phi}_{02}$ satisfies $I(\bar{\phi}_{02},\mathcal{C})=0$.
\end{enumerate}

Only the curve $\mathcal{C}_1$ can be solved for analytically, that is, $\mathcal{C}_1(\mathcal{A})=\sqrt{\frac{2\sin\gamma}{\mathcal{A}-\pi}}$ (for $\gamma \neq 0, \pi$). While, the critical point of $\frac{d\hat{F}_T}{d\phi_0}$, $\phi_0^*$ and the angle $\bar{\phi}_{02}$ have to be solved for numerically. In the following examples, we will analyze the boundary curves $\mathcal{C}_i$ and plot the $\mathcal{C}$ vs $\mathcal{A}$ regions. 

\subsubsection{Example One: $\gamma = 0$}\label{gamma0}
When $\gamma = 0$, we have $\frac{d\hat{F}_T}{d\phi_0}(\pi)=2>0$ so that $\hat{F}_T$ has at most one equilibrium point, denoted as  $\bar{\phi}_0$ if it exists. Therefore, $\mathcal{C}_1$, the boundary between the zero equilibrium point region and the one equilibrium point region, has the following expression
\begin{equation}
\hat{F}_T(\pi) = 0 \quad \Leftrightarrow \quad \mathcal{A} = \pi.
\end{equation}

In Fig.~\ref{ac0}, the one equilibrium point region is to the left of $\mathcal{C}_1$ and  the no equilibrium point region is to the right of $\mathcal{C}_1$. By Theorem~\ref{eqnoninterpileqpiby2}, the equilibrium point never lies in the intersection region. 

\begin{figure}[h!]
\includegraphics[scale = 0.4]{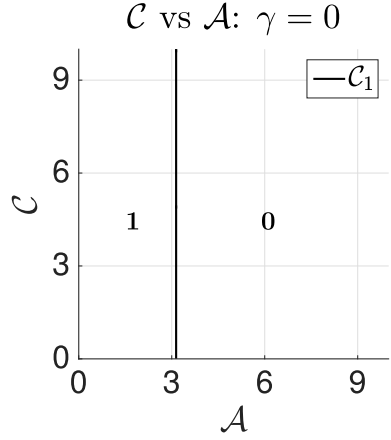}
\caption{$\mathcal{C}$ vs $\mathcal{A}$: $\gamma = 0$. $\mathbf{0}$ indicates the zero equilibrium point region and $\mathbf{1}$ indicates the one equilibrium point region. The boundary curve between region $\mathbf{0}$ and region $\mathbf{1}$ is $\mathcal{A}=\pi$.}
\label{ac0}
\end{figure}

\subsubsection{Example Two: $\gamma = \frac{\pi}{4}$}

When $\gamma = \frac{\pi}{4}$, $\sign\left(\frac{d\hat{F}_T}{d\phi_0}(\pi)\right)$ can be either nonnegative or negative. We have the following two cases: 
\begin{enumerate}
\item $\frac{d\hat{F}_T}{d\phi_0}(\pi)\geq 0 \quad \Leftrightarrow \quad -4\mathcal{C}\sin\frac{\gamma}{2}+2\cos\gamma\geq 0$.

The inequality above implies $\mathcal{C}\in [0,\mathcal{C}_0]$ where $\mathcal{C}_0=\frac{\cos\gamma}{2\sin\frac{\gamma}{2}}$. In this case, $\hat{F}_T$ has at most one equilibrium point, denoted as $\bar{\phi}_0$ if it exists.

When $\mathcal{C}\in [0,\mathcal{C}_0]$, the boundary curve $\mathcal{C}_{11}(\mathcal{A})$ is 
\begin{equation}
\hat{F}_T(\pi) = 0 \quad  \Leftrightarrow \quad \mathcal{C}_{11}(\mathcal{A})=\sqrt{\frac{\sqrt{2}}{\mathcal{A}-\pi}},
\label{c11}
\end{equation}
where $\mathcal{A}\in [\mathcal{A}_0,\infty)$ and $\mathcal{A}_0$ satisfies $\mathcal{C}_{11}(\mathcal{A}_0)=\mathcal{C}_0$. Moreover, $\mathcal{C}_{11}(\mathcal{A})$ is the boundary curve between the zero equilibrium point region and the one equilibrium point region. The zero equilibrium point region is above $\mathcal{C}_{11}(\mathcal{A})$ and the one equilibrium point region is below $\mathcal{C}_{11}(\mathcal{A})$. 

In this case, there is no $\phi_0^*>\frac{\pi}{2}$. Thus, $\mathcal{C}_2$ curve is not defined on $\mathcal{A}\in[\mathcal{A}_0,\infty)$.

\item $\frac{d\hat{F}_T}{d\phi_0}(\pi)<0 \quad \Leftrightarrow \quad -4\mathcal{C}\sin\frac{\gamma}{2}+2\cos\gamma < 0$.

$\frac{d\hat{F}_T}{d\phi_0}(\pi)< 0$ implies $\mathcal{C}>\mathcal{C}_0$. In this case, $\hat{F}_T$ has at most two equilibrium points, denoted as $\bar{\phi}_{01}$ and $\bar{\phi}_{02}$ if they exist.

When $\mathcal{C}>\mathcal{C}_0$, the boundary curve $\mathcal{C}_{12}(\mathcal{A})$ is

\begin{equation}
\hat{F}_T(\pi) = 0 \quad \Leftrightarrow \quad\mathcal{C}_{12}(\mathcal{A})=\sqrt{\frac{\sqrt{2}}{\mathcal{A}-\pi}}\quad \textrm{where} \quad \mathcal{A}\in (\pi,\mathcal{A}_0].
\label{c12}
\end{equation}

Since $\frac{d\hat{F}_T}{d\phi_0}(\pi)<0$, $\mathcal{C}_{12}(\mathcal{A})$ is the boundary curve between the one equilibrium point region and the two equilibrium points region. The one equilibrium point region is to the left of $\mathcal{C}_{12}(\mathcal{A})$ and the two equilibrium points region is to the right of $\mathcal{C}_{12}(\mathcal{A})$. Moreover, $\mathcal{C}_{11}(\mathcal{A})$ in (\ref{c11}) and $\mathcal{C}_{12}(\mathcal{A})$ in (\ref{c12}) can be combined together, denoted by $\mathcal{C}_{1}(\mathcal{A})$
\begin{equation}
\mathcal{C}_{1}(\mathcal{A}) = \sqrt{\frac{\sqrt{2}}{\mathcal{A}-\pi}}\quad \textrm{where} \quad \mathcal{A}\in(\pi,\infty).
\end{equation}

In this case, the critical point $\phi_0^*>\frac{\pi}{2}$ exists and $\mathcal{C}_2(\mathcal{A})$ can be obtained numerically on $\mathcal{A}\in(\pi,\mathcal{A}_0]$.
\end{enumerate}

In Fig.~\ref{acpiby4}, the region between the curve $\mathcal{C}_{1}(\mathcal{A})$ and $\mathcal{C}_{2}(\mathcal{A})$ is the two equilibrium points region with $\mathcal{A} \in (\pi, \mathcal{A}_0]$. The one equilibrium region is below $\mathcal{C}_{1}(\mathcal{A})$ and the zero equilibrium region is above $\mathcal{C}_{12}(\mathcal{A})$ and $\mathcal{C}_{2}(\mathcal{A})$ curves. The equilibrium point(s) do not lie in the intersection region by Theorem~\ref{eqnoninterpileqpiby2}.
\begin{figure*}[h!]
\includegraphics[scale = 0.4]{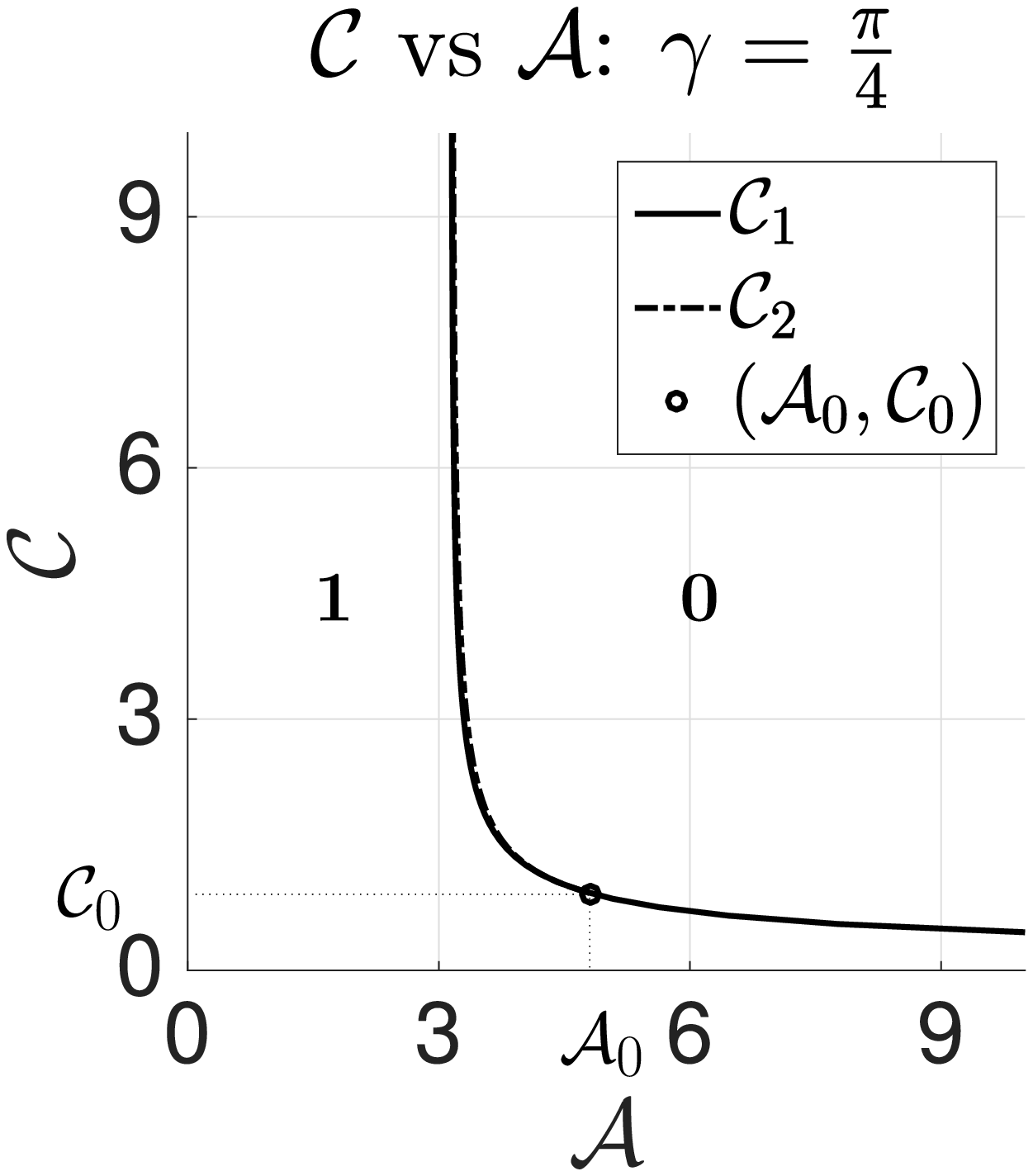}
\includegraphics[scale = 0.4]{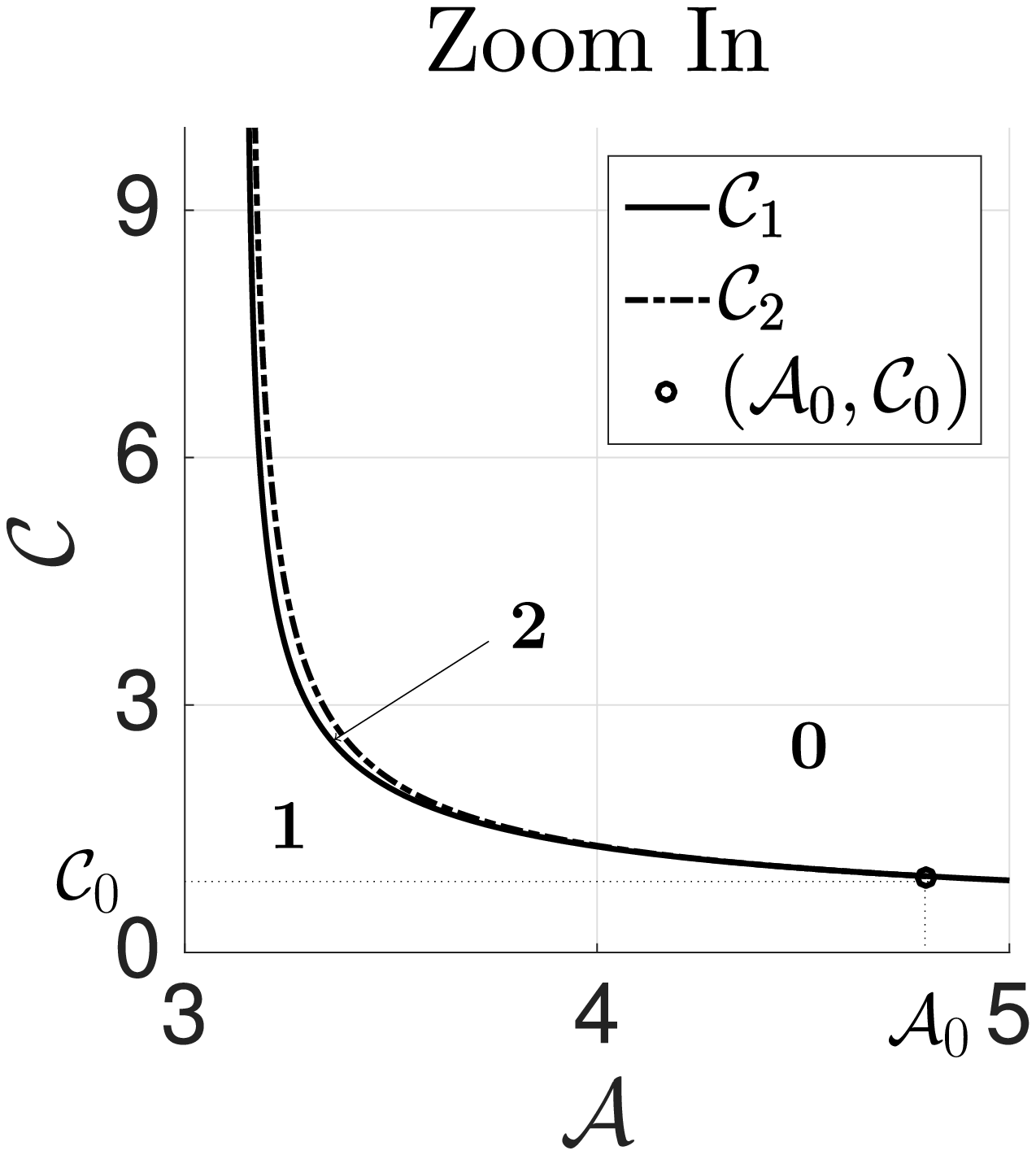}
\caption{$\mathcal{C}$ vs $\mathcal{A}$: $\gamma = \frac{\pi}{4}$. $\mathbf{0}$ indicates the zero equilibrium point region, $\mathbf{1}$ indicates the one equilibrium point region and $\mathbf{2}$ indicates the two equilibrium points region. The boundary curve ends at point $(\mathcal{A}_0,\mathcal{C}_0) = \left(\pi+\frac{4\sqrt{2}}{2+\sqrt{2}}, \frac{\sqrt{2+\sqrt{2}}}{2}\right)$.}
\label{acpiby4}
\end{figure*}

\subsubsection{Example Three: $\gamma = \frac{\pi}{2}$}

When $\gamma = \frac{\pi}{2}$, the intersection of the fluid interfaces never happens by Theorem~\ref{eqnoninterpileqpiby2}. We have the explicit expression for the boundary curve $\mathcal{C}_1(\mathcal{A})=\sqrt{\frac{2}{\mathcal{A}-\pi}}$, and the boundary curve $\mathcal{C}_2(\mathcal{A})$ can be obtained numerically. It is the inverse of $\mathcal{A}^*(\mathcal{C})$ (replacing $\mathcal{A}^*$ by $\mathcal{A}$). In addition, we have discussed the asymptotic series of $\mathcal{A}^*$ for both $\mathcal{C} \rightarrow 0$ and $\mathcal{C} \rightarrow \infty$ in Sec.~\ref{secasymp}. This gives the $\gamma = \frac{\pi}{2}$ case as shown in Fig.~\ref{acpiby2}. The zero equilibrium point region is above $\mathcal{C}_{2}(\mathcal{A})$, the two equilibrium points region is between $\mathcal{C}_{1}(\mathcal{A})$ and $\mathcal{C}_{2}(\mathcal{A})$. The one equilibrium point region is below $\mathcal{C}_{1}(\mathcal{A})$. The stability of the equilibrium point(s) can be seen in Theorem~\ref{eqstablepiby2} or in Table~\ref{eqstabletb}.

\begin{figure}[h!]
\includegraphics[scale = 0.4]{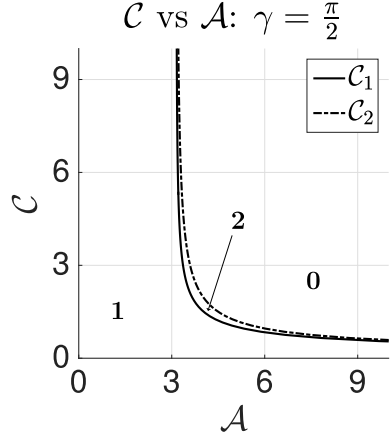}
\caption{$\mathcal{C}$ vs $\mathcal{A}$: $\gamma = \frac{\pi}{2}$. $\mathbf{0}$ indicates the zero equilibrium point region, $\mathbf{1}$ indicates the one equilibrium point region and $\mathbf{2}$ indicates the two equilibrium points region.}
\label{acpiby2}
\end{figure}

\subsubsection{Example Four: $\gamma = \frac{3\pi}{4}$}\label{exfour}

When $\gamma = \frac{3\pi}{4}$, $\hat{F}_T$ can admit at most two equilibria, denoted as $\bar{\phi}_{01}$ and $\bar{\phi}_{02}$ if they exist. By Theorem~\ref{eqnonintergpiby2}, $\bar{\phi}_{01}$ never lies in the intersection region. But for $\bar{\phi}_{02} \in \left[\frac{3\pi}{4},\pi\right]$, $I(\phi_0,\mathcal{C})$ is needed to test their validity. Therefore $\mathcal{C}_3(\mathcal{A})$ is the boundary curve between the one valid and one invalid equilibrium point region and the two (valid) equilibrium points region. 

If $\hat{F}_T(\pi)>0$, $\hat{F}_T$ admits exactly one equilibrium point $\bar{\phi}_0$. Since $I(\bar{\phi}_0,\mathcal{C})>0$, the equilibrium point never lies in intersection region. When $\hat{F}_T(\pi)=0$, $\bar{\phi}_0 = \pi$ is also an equilibrium point, but it's invalid 
(since $I(\pi,\mathcal{C})<0$). Therefore, $\mathcal{C}_1(\mathcal{A})$ is the boundary curve between the one equilibrium point region and the one valid, one invalid equilibrium point region. Explicitly, we have the form $\mathcal{C}_1(\mathcal{A})=\sqrt{\frac{\sqrt{2}}{\mathcal{A}-\pi}}$.

In Fig.~\ref{ac3piby4}, the one equilibrium point region is below $\mathcal{C}_1(\mathcal{A})$, the zero equilibrium point region is above $\mathcal{C}_2(\mathcal{A})$. The one valid, one invalid equilibrium point region is bounded by $\mathcal{C}_1(\mathcal{A})$ and $\mathcal{C}_3(\mathcal{A})$. The two (valid) equilibrium points region is bounded by $\mathcal{C}_2(\mathcal{A})$ and $\mathcal{C}_3(\mathcal{A})$. 
\begin{figure}[h!]
\includegraphics[scale = 0.4]{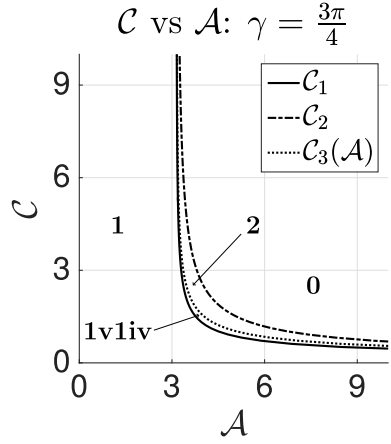}
\caption{$\mathcal{C}$ vs $\mathcal{A}$: $\gamma = \frac{3\pi}{4}$. $\mathbf{0}$ indicates the zero equilibrium point region, $\mathbf{1}$ indicates the one equilibrium point region, $\mathbf{1v1iv}$ indicates the one valid, one invalid equilibrium point region and $\mathbf{2}$ indicates the two (valid) equilibrium points region.}
\label{ac3piby4}
\end{figure}

\subsubsection{Example Five: $\gamma = \pi$}\label{exfive}

When $\gamma = \pi$, the results are similar to the previous case, $\gamma =\frac{3\pi}{4}$. The same strategy can be applied to obtain $\mathcal{C}_2(\mathcal{A})$ and $\mathcal{C}_3(\mathcal{A})$. The only difference is that the boundary curve $\mathcal{C}_1$ between the one equilibrium point region and the one valid, one invalid equilibrium point region is $\mathcal{A}=\pi$ (see Fig.~\ref{acpi}).

\begin{figure}[h!]
\includegraphics[scale = 0.4]{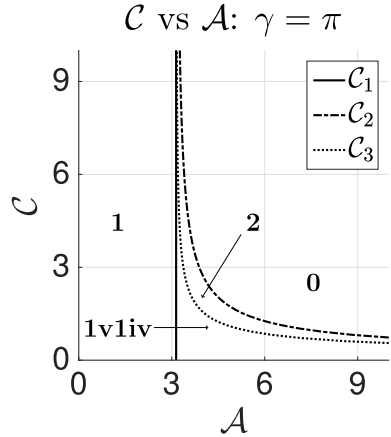}
\caption{$\mathcal{C}$ vs $\mathcal{A}$: $\gamma = \pi$. $\mathbf{0}$ indicates the zero equilibrium point region, $\mathbf{1}$ indicates the one equilibrium point region, $\mathbf{1v1iv}$ indicates the one valid, one invalid equilibrium point region and $\mathbf{2}$ indicates the two (valid) equilibrium points region. The $\mathcal{C}_1$ curve is $\mathcal{A}=\pi$.}
\label{acpi}
\end{figure}

\begin{remark}\label{Alesspi}
If $0<\mathcal{A}<\pi$, from Eqs.(\ref{f0value}) and (\ref{fpivalue}), we have $\hat{F}_T(\pi) = 2\sin\gamma +\mathcal{C}^2(\pi-\mathcal{A})>0$ and $\hat{F}_T(0)=-\mathcal{A}\mathcal{C}^2-2\sin\gamma <0$ for arbitrary $\gamma$. From Theorem \ref{eqstablepiby2}, Theorem \ref{eqstablegtpiby2} and Theorem \ref{eqstablelspiby2}, $\hat{F}_T$ admits only one equilibrium point, which is stable. The $\mathcal{A} = \pi$ case is analogous to the $0<\mathcal{A}<\pi$ case, except when $\gamma = 0$ or $\pi$. When $\mathcal{A}=\pi$ and $\gamma = 0$, $\hat{F}_T(\pi) = 0$ and $\frac{d\hat{F}_T}{d\phi_0}(\pi)>0$, therefore, from Theorem~\ref{eqstablelspiby2}, $\phi_0 = \pi$ is the only equilibrium point, which is stable. But for $\mathcal{A}=\pi$ and $\gamma = \pi$, $\hat{F}_T(\pi)=0$, $I(\pi,\mathcal{C}) <0$, from Theorem~\ref{eqstablegtpiby2}, there are two equilibrium points. But $\phi_0 = \pi$ lies in the intersection region, which is an invalid equilibrium point (see FIGs~\ref{ac0}, \ref{acpiby4}, \ref{acpiby2}, \ref{ac3piby4} and \ref{acpi}). 
\end{remark}

\subsection{An Example That Admits Two Configurations}
In this section, we give an example that admits two configurations. With contact angle $\gamma = \frac{\pi}{2}$, $\mathcal{A} = 3.8$ and $\mathcal{C} = 2$, total force curve can be shown in Fig.~\ref{multi}. The corresponding two equilibrium points are $\bar{\phi}_{01}=2.3915$ and $\bar{\phi}_{02}=3.0178$. Based on Theorem~\ref{eqstablepiby2}, the configuration in Figure~\ref{smaller} is stable and the configuration in Figure~\ref{larger} is unstable.

\begin{figure}[h]
\includegraphics[scale = 0.4]{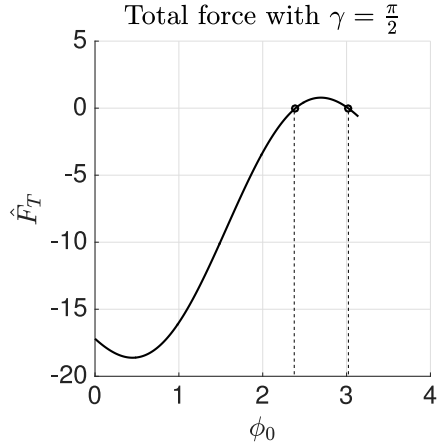}
\caption{Two Equilibrium points: $\bar{\phi}_{01} = 2.3915$ and $\bar{\phi}_{02} = 3.0178$.}
\label{multi}
\end{figure}
\begin{figure}[h]
\centering 
\subfloat[]{\includegraphics[scale = 0.4]{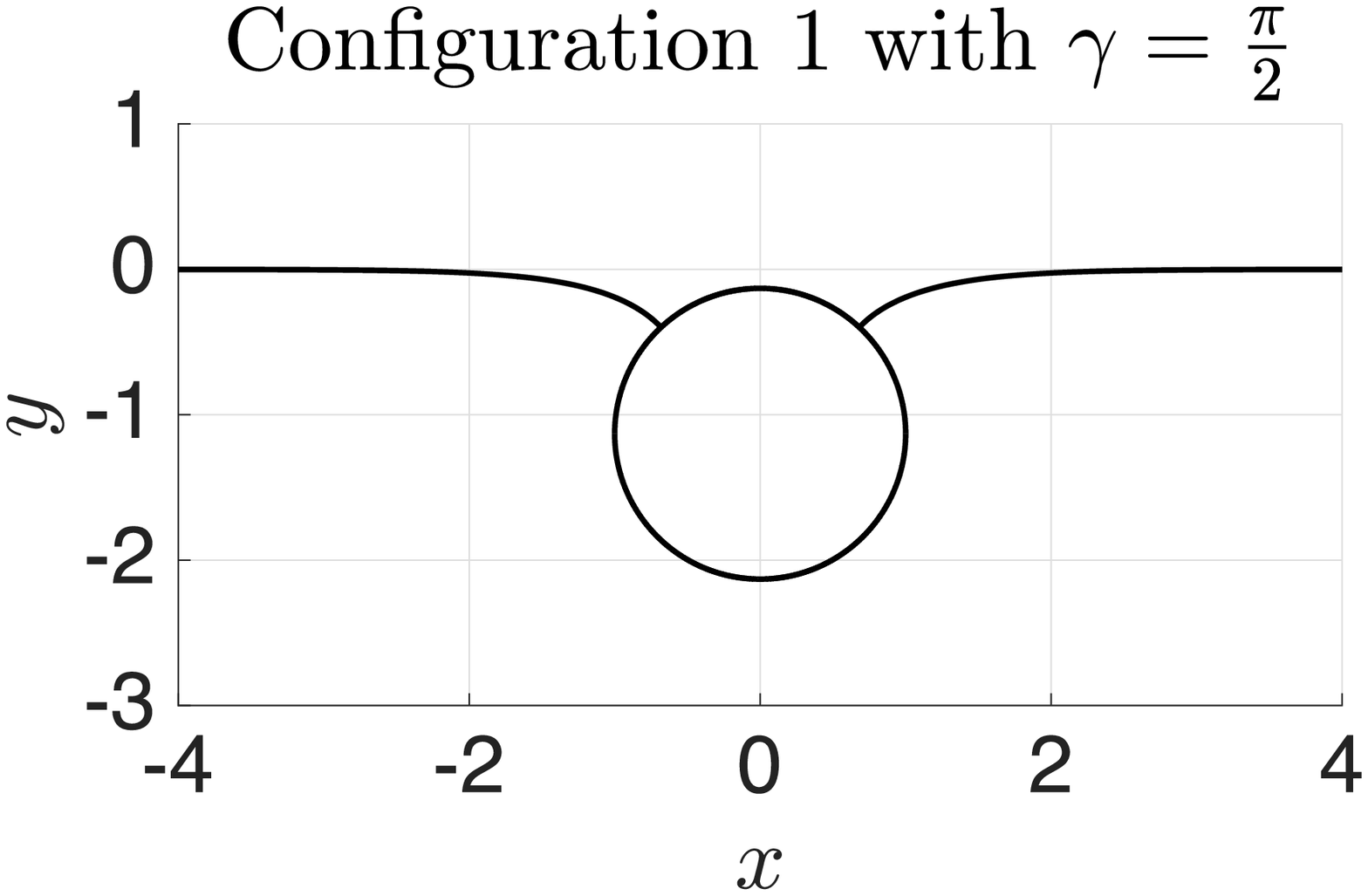}\label{smaller}}
\hspace{1cm}
\subfloat[]{\includegraphics[scale = 0.4]{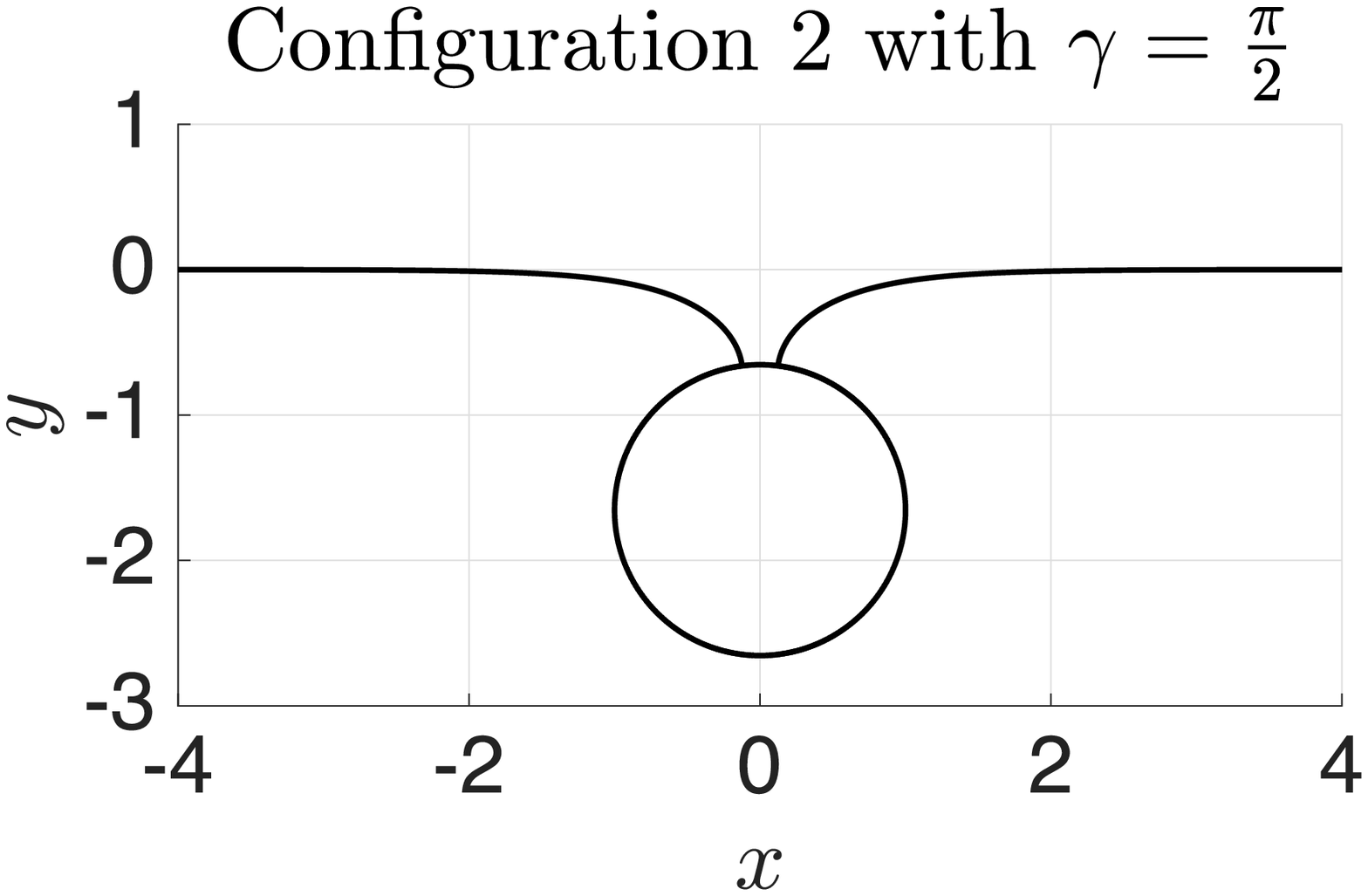}\label{larger}}
\caption{Parameters: $\gamma = \frac{\pi}{2}$, $\mathcal{A}=3.8$, $\mathcal{C}=2$ and radius $a = 1$.}
\end{figure}

\section{Conclusion and Related Work}
We have studied the floating configurations and their stability of a horizontal cylinder on an infinite reservoir. Two new elements were found: 1) the relation (\ref{detdh}) between the relative total energy $E_T$ and the total force $F_T$, which was used for determining the stability behavior of the equilibria; 2) the limitation of the intersection of fluid interfaces, determined by the intersection function $I(\phi_0,\mathcal{C})$.

Based on $-\frac{d{E_T}}{dh}=F_T$, the sign equivalence, $\sign(\frac{d{E_T}}{d\phi_0}) = \sign(F_T)$ and $\sign\left(\frac{d^2 E_T}{d \phi_0^2}(\bar{\phi}_0)\right)= \sign \left(\frac{dF_T}{d\phi_0}(\bar{\phi}_0)\right)$, gives a convenient way to minimize $E_T$. $\frac{dF_T}{d\phi_0}(\bar{\phi}_0)>0$ implies the local minimum of $E_T$ of the equilibrium point $\bar{\phi}_0$. Based on Theorem~\ref{eqstablepiby2}, Theorem~\ref{eqstablegtpiby2} and Theorem~\ref{eqstablelspiby2}, the numbers of equilibria and their stability can be classified as follows.
\begin{enumerate}
\item When $\gamma \geq \frac{\pi}{2}$, $\hat{F}_T$ admits at most two equilibrium points $\bar{\phi}_{01}$ and $\bar{\phi}_{02}$, the smaller equilibrium point $\bar{\phi}_{01}$ is stable and the larger equilibrium point $\bar{\phi}_{02}$ is unstable. In addition, if $\hat{F}_T$ admits only one equilibrium point $\bar{\phi}_0$ and $\frac{d\hat{F}_T}{d\phi_0}(\bar{\phi}_0)>0$, it is stable. If $\frac{d\hat{F}_T}{d\phi_0}(\bar{\phi}_0)=0$, it is unstable. 
\item When $\gamma < \frac{\pi}{2}$, if $\frac{d\hat{F}_T}{d\phi_0}(\pi)<0$, $\hat{F}_T$ behaves the same as $\hat{F}_T$ with $\gamma \geq \frac{\pi}{2}$. If $\frac{d\hat{F}_T}{d\phi_0}(\pi) \geq 0$, $\hat{F}_T$ admits at most one equilibrium point $\bar{\phi}_0$ which is stable. 
\end{enumerate}

In the analysis of forces, we assume the surface tension force $F_\sigma$ exists only along the fluid interface, which contradicts Young's diagram. While the relation $-\frac{d{E_T}}{dh}=F_T$ implicitly supports Finn's assertion\cite{MR2259294, youngpara}.

When $\gamma = \frac{\pi}{2}$, there is always no intersection of the fluid interfaces, and when $\gamma \neq \frac{\pi}{2}$, intersection may occur. For arbitrary contact angle $\gamma\leq \frac{\pi}{2}$, there is no equilibrium point lying in the intersection region (see Theorem~\ref{eqnoninterpileqpiby2}). For $\gamma >\frac{\pi}{2}$, we observe the one valid, one invalid equilibrium point region exists and there always is a stable equilibrium in the non-intersection region (see Theorem~\ref{eqnonintergpiby2}). Considering the intersection region, we illustrate the numbers of equilibria and their stability behavior in $\mathcal{A}\mathcal{C}$ plane. For the cases $\gamma = 0,\frac{\pi}{4},\frac{\pi}{2},\frac{3\pi}{4},\pi$, we discuss the boundary curves between the regions with different numbers of equilibria. 

Treinen\cite{treinen16} also studied the unbounded horizontal cylinder problem for both the $\rho_m<0$ and the $\rho_m>\rho$ cases. In the $\rho_m>\rho$ ($\mathcal{A}>\pi$) case, our study agrees with Treinen's conjecture 1, the system admits at most two equilibrium points, the smaller one is stable and the larger one is unstable. The discussion of the $\rho_m<0$ case can be seen in Remark~\ref{rhomlthan0}. Moreover, for the $0<\rho_m<\rho$ ($0<\mathcal{A}<\pi$) case, there is only one configuration, which is stable. For the $\rho_0=\rho$ ($\mathcal{A}=\pi$) case, it is analogous to the $0<\rho_m<\rho$ case except for $\gamma = \pi$. When $\gamma = \pi$, there are two equilibrium points, but the larger one $\bar{\phi}_0 = \pi$ is not physically realizable (see Remark~\ref{Alesspi}).

The horizontal cylinder behaves different in a laterally finite container than in the unbounded reservoir. McCuan and Treinen\cite{mccuan_treinen17} studied the laterally finite container case and gave an example that there are three equilibrium points and two of them are stable.

A ball floating on an unbounded bath deserves study. There is a non-monotone relation between $h$ and $\phi_0$ (see Ref.~\onlinecite{hanzhe}), which makes this problem significantly different than the cylinder floating on an unbounded bath.

\appendix
\section{Computation of the Total Energy $E_T$}\label{Apx1}
In this section, the detailed derivation of both surface tension energy $E_\sigma$ and the fluid potential energy $E_{F}$ are given.  

\subsection{Surface Tension Energy $E_\sigma$}
When the fluid interface is a graph, we have discussed the surface tension energy $E_\sigma$ has the form
\begin{equation*}
E_\sigma = 2\sigma\lim_{x_1\rightarrow \infty}\left[\int_{x_0}^{x_1} \sqrt{1+\left(\frac{du}{dx}\right)^2}\,dx - \int_0^{x_1} \,dx\right].
\end{equation*}

We rearrange the integrals above, 
\begin{equation}\label{inteesigma}
E_\sigma = 2\sigma\lim_{x_1 \rightarrow \infty}\left[\int_{x_0}^{x_1} \left(\sqrt{1+\left(\frac{du}{dx}\right)^2}-1\right)\,dx - \int_0^{x_0}\,dx\right].
\end{equation}

Using the solution $u(\psi)$ and $x(\psi)$ in Eqs.~(\ref{solnu}) and~(\ref{solnx}), $E_{\sigma}$ can be integrated in terms of $\psi$. The parametric form works for both graph and non-graph cases. When $\psi_0>0$, Eq.~(\ref{inteesigma}) becomes 
\begin{eqnarray*}
E_{\sigma} &=& 2\sigma\int_{\psi_0}^0\left(-\sqrt{\left(\frac{du}{d\psi}\right)^2+\left(\frac{dx}{d\psi}\right)^2}+\frac{\cos\psi}{2\sqrt{\kappa}\sin\frac{\psi}{2}}\right)\,d\psi-2\sigma a\sin\phi_0\\
&=&\frac{\sigma}{\sqrt{\kappa}}\int_{0}^{\psi_0}\left(\frac{1}{\sin\frac{\psi}{2}}-\frac{\cos \psi}{\sin \frac{\psi}{2}}\right) \,d\psi -2\sigma a \sin\phi_0\\
&=& 4\frac{\sigma}{\sqrt{\kappa}}\left(1-\cos\frac{\psi_0}{2}\right) -2\sigma a \sin\phi_0.
\end{eqnarray*}

When $\psi_0<0$, Eq.~(\ref{inteesigma}) becomes
\begin{eqnarray*}
E_{\sigma} &=& 2\sigma\int_{\psi_0}^0\left(\sqrt{\left(\frac{du}{d\psi}\right)^2+\left(\frac{dx}{d\psi}\right)^2}+\frac{\cos\psi}{2\sqrt{\kappa}\sin\frac{\psi}{2}}\right)\,d\psi-2\sigma a\sin\phi_0\\
&=& \frac{\sigma}{\sqrt{\kappa}} \int_{\psi_0}^{0}\left(-\frac{1}{\sin\frac{\psi}{2}} + \frac{\cos\psi}{\sin\frac{\psi}{2}} \right) \,d\psi -2\sigma a \sin\phi_0\\
&=& 4\frac{\sigma}{\sqrt{\kappa}}\left(1-\cos\frac{\psi_0}{2}\right) -2\sigma a \sin\phi_0.
\end{eqnarray*}

Therefore, we have the surface tension energy 
\begin{equation}
E_{\sigma} = 4\frac{\sigma}{\sqrt{\kappa}}\left(1-\cos\frac{\psi_0}{2}\right) -2\sigma a \sin\phi_0.
\end{equation}

\subsection{Fluid Potential Energy $E_F$}\label{Apxb2}
In the case when the fluid interface and the cross section of the wetted region are both graphs, we break the fluid potential energy $E_F$ into two parts. 
\begin{equation}
E_F = \underbrace{2\rho g \int_{0}^{x_0} \frac{y^2}{2}\,dx}_{E_{F1}}+ \underbrace{2 \rho g\int_{x_0}^{\infty}\frac{u^2}{2}\,dx}_{E_{F2}},
\label{EFformula}
\end{equation}
where $y$ is the vertical height of the bottom of the cylinder and $u$ is the fluid height, shown in Fig.~\ref{fluidpo}, they have the form
\begin{eqnarray*}
u(\psi) &=& -\frac{2}{\sqrt{\kappa}} \sin\frac{\psi}{2},\\
y(\phi) &=& h-a\cos \phi.
\end{eqnarray*}

\begin{enumerate}
\item For $E_{F_2}$,
\begin{eqnarray*}
E_{F_2} &=& 2\rho g\int_{x_0}^{\infty} \frac{u^2}{2} \,dx = \rho g \int_{\psi_0}^{0} \left(-\frac{2}{\sqrt{\kappa}}\sin\frac{\psi}{2}\right)^2\left(-\frac{1}{2\sqrt{\kappa}}\frac{\cos\psi}{\sin\frac{\psi}{2}}\right)\,d\psi\\
&=& -\frac{2\sigma}{\sqrt{\kappa}}\int_{\psi_0}^{0} \sin\frac{\psi}{2}\cos\psi \,d\psi\\
&=& -\frac{2\sigma}{\sqrt{\kappa}}\bigg(\frac{2}{3}-\cos\frac{\psi_0}{2}+\frac{1}{3}\cos\frac{3\psi_0}{2}\bigg).
\end{eqnarray*}

With the identity $\cos\frac{3\psi_0}{2}=\cos\frac{\psi_0}{2}(2\cos\psi_0-1)$,
\begin{equation}
E_{F_2} = -\frac{4\sigma}{3\sqrt{\kappa}}\bigg(1-2\cos\frac{\psi_0}{2}+\cos\frac{\psi_0}{2}\cos\psi_0\bigg).
\end{equation}
\item For $E_{F1}$, 
\begin{eqnarray*}
E_{F_1} &=& 2\rho g \int_{0}^{x_0} \frac{y^2}{2} \,dx = \rho g \int_{0}^{\phi_0} (a\cos\phi-h)^2 a\cos\phi \,d\phi\\
&=& \rho g \int_{0}^{\phi_0}\left(a\cos\phi-a\cos\phi_0+\frac{2}{\sqrt{\kappa}}\sin\frac{\psi_0}{2}\right)^2 a\cos\phi \,d\phi\\
&=& \frac{1}{12}\rho ga^3\sin3\phi_0-\rho ga^3\phi_0\cos\phi_0+\frac{3}{4}\rho g a^3\sin\phi_0-a^2\sqrt{\sigma \rho g}\sin\frac{\psi_0}{2}\sin2\phi_0\\
&&+2a^2\sqrt{\sigma \rho g}\phi_0\sin\frac{\psi_0}{2}+4\sigma a\sin^2\frac{\psi_0}{2}\sin\phi_0.
\end{eqnarray*}
\end{enumerate}

\begin{figure}[h]
\centering
     \subfloat[]{\includegraphics[scale = 0.3]{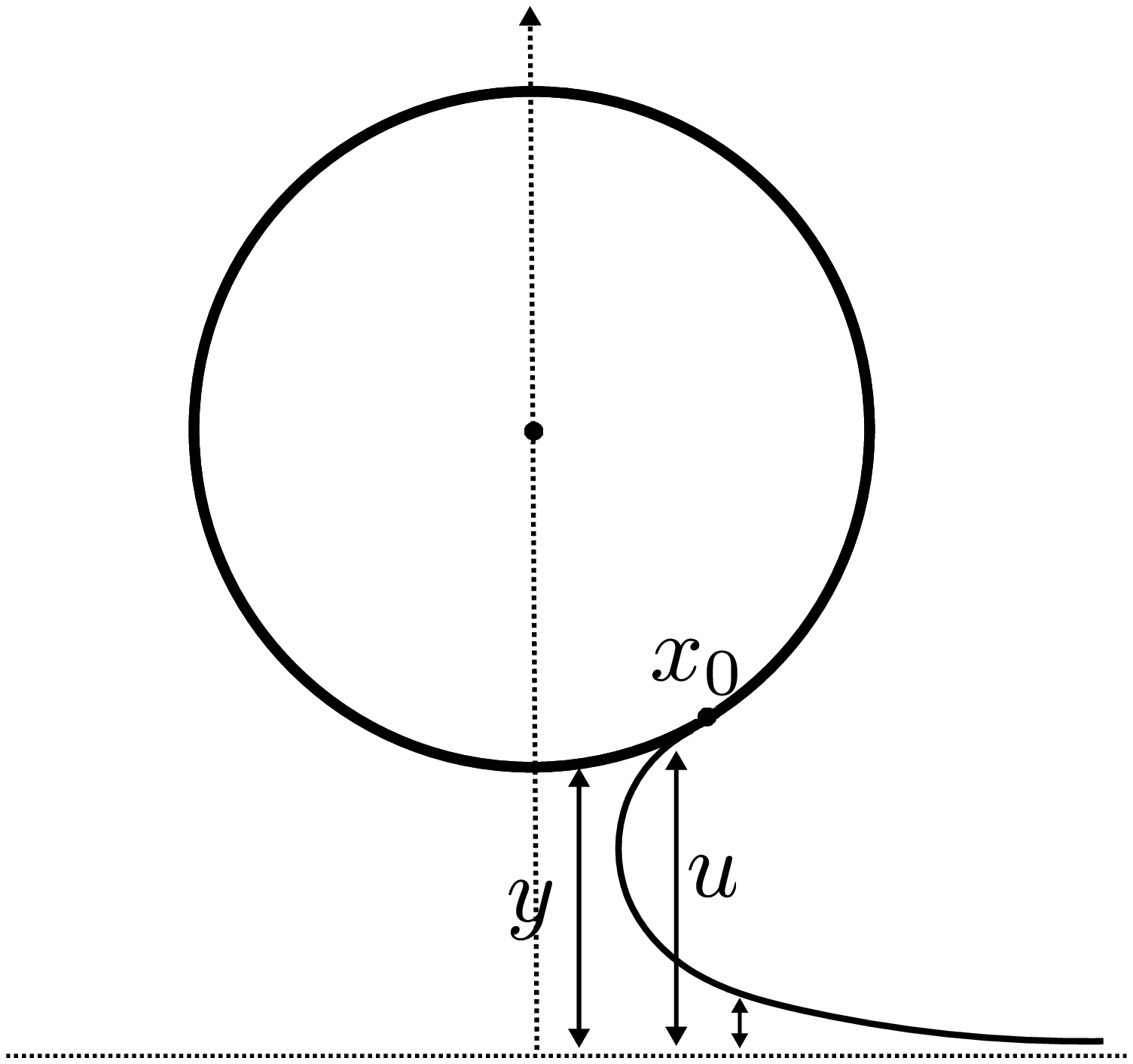}\label{fluidponongraph}}
     \hspace{1cm}
     \subfloat[]{\includegraphics[scale = 0.3]{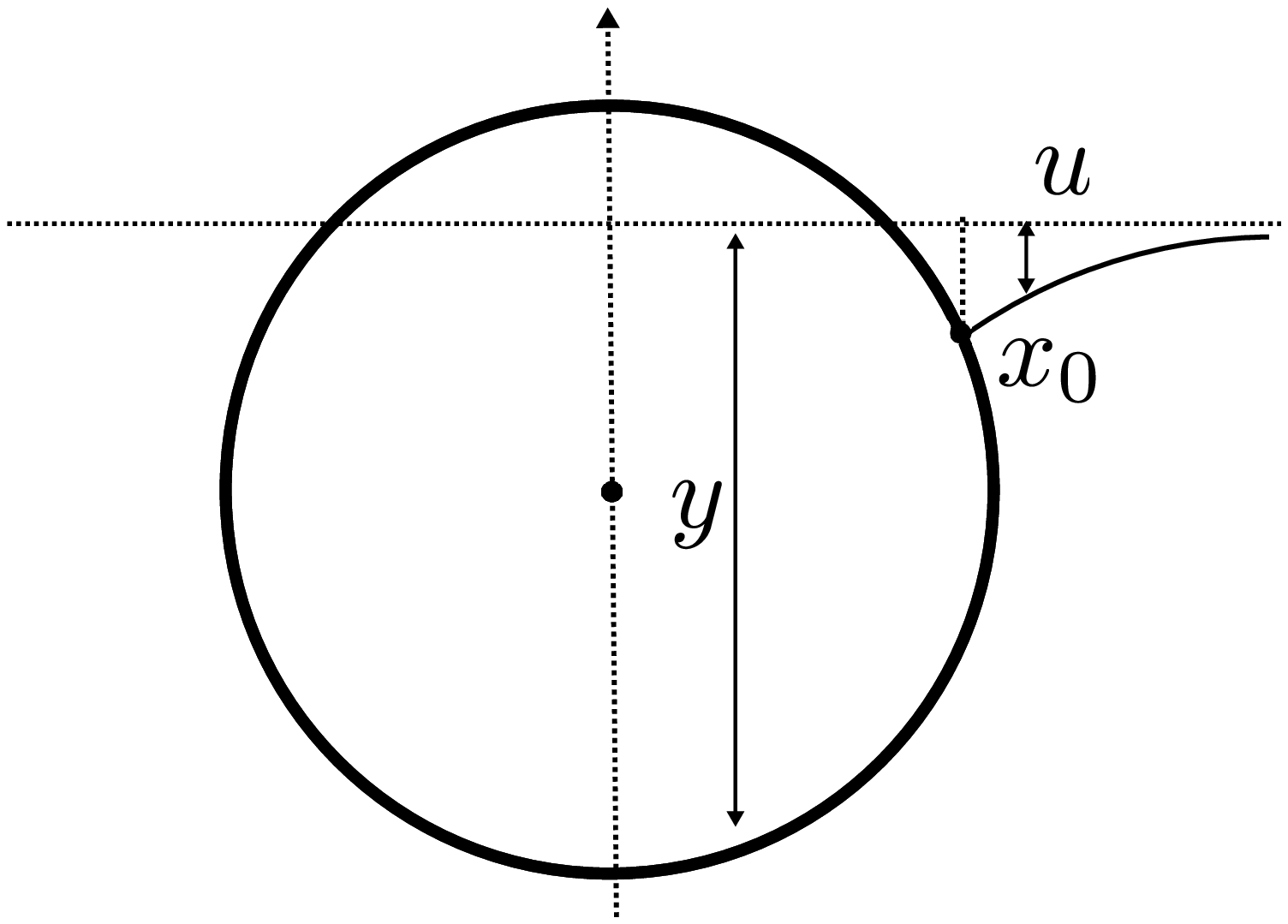}\label{wettednongraph}}
\caption{(a) The case when the fluid interface is not a graph and (b) the case when the cross section of the wetted region is a non-graph. }
\end{figure}

Thus the fluid potential energy $E_F$ has the form
\begin{eqnarray*}
E_F(\psi_0,\phi_0) = E_{F_1}+E_{F_2} &=& -\frac{4\sigma}{3\sqrt{\kappa}}\left(1-2\cos\frac{\psi_0}{2}+\cos\frac{\psi_0}{2}\cos\psi_0\right)+\frac{1}{12}\rho ga^3\sin3\phi_0\\
&&-\rho ga^3\phi_0\cos\phi_0+\frac{3}{4}\rho g a^3\sin\phi_0-a^2\sqrt{\sigma \rho g}\sin\frac{\psi_0}{2}\sin2\phi_0\\
&&+2a^2\sqrt{\sigma \rho g}\phi_0\sin\frac{\psi_0}{2}+4\sigma a\sin^2\frac{\psi_0}{2}\sin\phi_0.
\end{eqnarray*}

Moreover, if the fluid interface or the cross section of the wetted region is not a graph (see Fig.~\ref{fluidponongraph} and Fig.~\ref{wettednongraph}), the expression of $E_F(\psi_0,\phi_0)$ also holds. But in addition, we have to assume that there is no intersection of the fluid interfaces.

\section{Analysis of the Buoyant Force}\label{Apx2}
In this section, we will examine how Archimedes' principle works in the no surface tension case and another way to approach buoyant force using the divergence theorem. 

The buoyant force has the form:
\begin{equation}
F_B = \hat{k} \cdot \int\limits_{\Sigma} \vec{F}\,ds,
\end{equation}
where the centripetal component pressure $\vec{F} = \rho g y \hat{n}_c$, $\hat{n}_c$ is the outer unit normal of the cylinder, $\hat{k}$ is the unit vertical vector pointing upward and $\Sigma$ is the wetted region of the cylinder.  

\begin{figure}[h]
\begin{center}
\includegraphics[scale = 0.4]{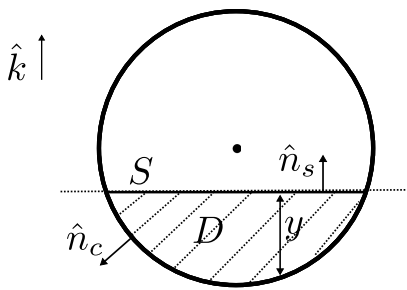}
\end{center}
\caption{Archimedes' principle.}
\label{archimedes}
\end{figure}

\begin{enumerate}
\item If no surface tension exists (shown in Fig.~\ref{archimedes}), 
\begin{eqnarray*}
F_B &=& \hat{k} \cdot \int\limits_{\Sigma} \vec{F}\,ds = \int\limits_{\Sigma} \left(\rho g y \hat{k}\right)\cdot \hat{n} \,ds + \underbrace{ \int\limits_{S}\left(\rho g y \hat{k}\right)\cdot \hat{n} \,ds }_{=0,\textrm{ since } y=0 \textrm{ on } S.} \\
&=& \int\limits_{\Sigma\cup S} \left(\rho g y \hat{k}\right) \cdot \hat{n} \,ds = \int_{D} \rho g \,dA\\
&=& \rho g \left\vert D\right\vert,
\end{eqnarray*}
where $\partial D = \Sigma\cup S$ and $\hat{n} \in \{\hat{n}_c,\hat{n}_s\}$ is the outer normal of $\partial D$. When the divergence theorem is applied, $F_B = \rho g |D|$, which is known as Archimedes' principle. 

\item If surface tension is present (shown in Fig. \ref{bfst}), 
\begin{eqnarray*}
F_B &=& \hat{k} \cdot \int\limits_{\Sigma} \vec{F}\,ds\\
&=& \int\limits_{\Sigma} \left(\rho g y \hat{k}\right)\cdot \hat{n}_c \,ds + \underbrace{ \int\limits_{S_{top}}\left(\rho g y \hat{k}\right)\cdot \hat{n}_s \,ds }_{=0, \textrm{ since } y=0 \textrm{ on } S_{top}.} +\underbrace{ \int\limits_{S_{side}}\left(\rho g y \hat{k}\right)\cdot \hat{n}_s \,ds }_{=0, \textrm{ since } \hat{k}\cdot \hat{n}_s=0.} \\
&=& \int\limits_{\Sigma\cup S_{top} \cup S_{side}} \left(\rho g y \hat{k}\right) \cdot \hat{n} \,ds = \int\limits_{D} \rho g \,dA\\
&=& \rho g\left\vert D\right\vert,
\end{eqnarray*}

where $\partial D = \Sigma\cup S_{top} \cup S_{side}$ and $\hat{n} \in \{\hat{n}_c,\hat{n}_s\}$ is the outer normal of $\partial D$. When the divergence theorem is applied, $F_B = \rho g \left\vert D\right\vert$. But Archimedes' principle doesn't hold anymore. The enclosed area is no longer the immersed region due to the presence of surface tension. 
\end{enumerate}

\begin{figure}
\begin{center}
\includegraphics[scale = 0.4]{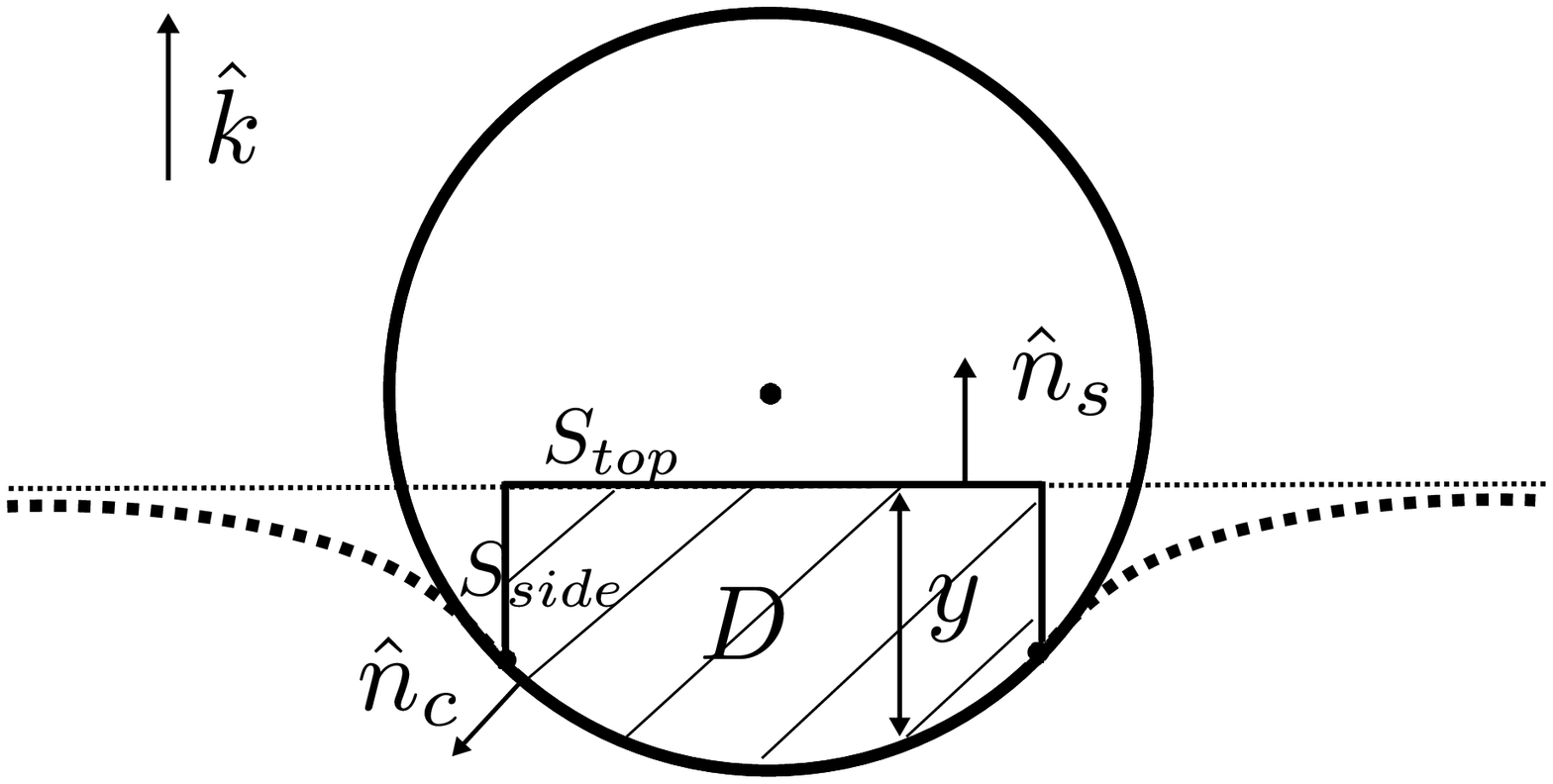}
\hspace{0.5cm}
\includegraphics[scale = 0.4]{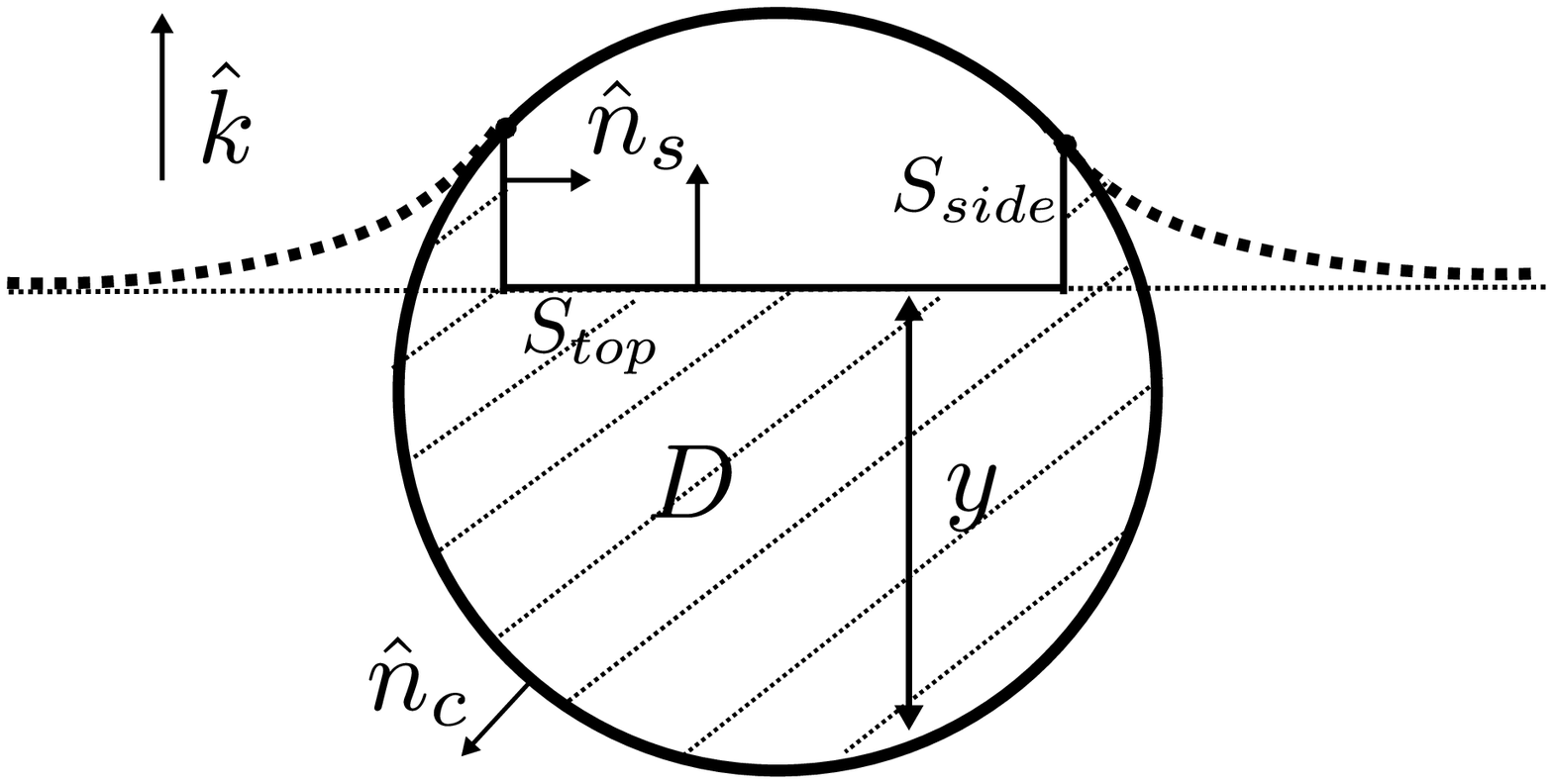}
\end{center}
\caption{The buoyant force when surface tension is present.}
\label{bfst}
\end{figure}

\section{Relation between $\frac{dE_T}{dh}$ and $F_T$}\label{Apxc}
In this section, we'll derive the relation between $E_T$ and $F_T$. First, we take derivative of $E_T$ in terms of $\phi_0$,
\begin{eqnarray*}
\frac{dE_T}{d\phi_0}(\phi_0) &=& -mga\sin\phi_0-mg\sqrt{\frac{\sigma}{\rho g}}\sin\left(\frac{\phi_0+\gamma}{2}\right)\\
&&-2\sigma a\sin\phi_0\sin(\phi_0+\gamma)-2\sigma \sqrt{\frac{\sigma}{\rho g}}\sin(\phi_0+\gamma)\sin\left(\frac{\phi_0+\gamma}{2}\right)\\
&&-4\sigma a \sin\left(\frac{\phi_0+\gamma}{2}\right)\cos\left(\frac{\phi_0+\gamma}{2}\right)\sin\phi_0-4a^2\sqrt{\sigma \rho g}\sin^2\phi_0\cos\left(\frac{\phi_0+\gamma}{2}\right)\\
&&-\frac{1}{2}\rho g a^3\sin\phi_0\sin2\phi_0-\frac{1}{2}a^2\sqrt{\sigma \rho g}\sin\left(\frac{\phi_0+\gamma}{2}\right)\sin2\phi_0\\
&&+\rho ga^3\phi_0\sin\phi_0+\sqrt{\sigma \rho g}a^2\phi_0\sin\left(\frac{\phi_0+\gamma}{2}\right).
\end{eqnarray*}
After arranging, we factor the common term
\begin{equation}
a\sin\phi_0+\sqrt{\frac{\sigma}{\rho g}}\sin\left(\frac{\phi_0+\gamma}{2}\right).
\end{equation}

\begin{eqnarray*}
\frac{dE_T}{d\phi_0}(\phi_0)&=&-mg\left[a\sin\phi_0+\sqrt{\frac{\sigma}{\rho g}}\sin\left(\frac{\phi_0+\gamma}{2}\right)\right]\\
&&-2\sigma \sin(\phi_0+\gamma)\left[a\sin\phi_0+\sqrt{\frac{\sigma}{\rho g}}\sin\left(\frac{\phi_0+\gamma}{2}\right)\right]\\
&&-4a\sqrt{\sigma \rho g}\cos\left(\frac{\phi_0+\gamma}{2}\right)\sin\phi_0\left[a\sin\phi_0+\sqrt{\frac{\sigma}{\rho g}}\sin\left(\frac{\phi_0+\gamma}{2}\right)\right]\\
&&-\frac{1}{2}\rho g a^2\sin2\phi_0\left[a\sin\phi_0+\sqrt{\frac{\sigma}{\rho g}}\sin\left(\frac{\phi_0+\gamma}{2}\right)\right]\\
&&+\rho g a^2\phi_0\left[a\sin\phi_0+\sqrt{\frac{\sigma}{\rho g}}\sin\left(\frac{\phi_0+\gamma}{2}\right)\right].
\end{eqnarray*}

We multiple the term $\frac{d\phi_0}{dh}$ (since $\frac{dh}{d\phi_0}<0$ on $\phi_0 \in \left[0,\pi\right]$ except $\phi_0 =\gamma = 0$ and $\phi_0 = \gamma = \pi$), equivalently, the chain rule is applied.
\begin{eqnarray*}
-\frac{dE_T}{d\phi_0}\frac{d\phi_0}{d h}&=&-\bigg[-mg-2\sigma \sin(\phi_0+\gamma)-4a\sqrt{\sigma \rho g}\cos\left(\frac{\phi_0+\gamma}{2}\right)\sin\phi_0\\
&& -\frac{1}{2}\rho g a^2\sin2\phi_0+\rho g a^2\phi_0\bigg]\left[a\sin\phi_0+\sqrt{\frac{\sigma}{\rho g}}\sin\left(\frac{\phi_0+\gamma}{2}\right)\right]\\
&& \left[-\frac{1}{a\sin\phi_0+\sqrt{\frac{\sigma}{\rho g}}\sin\left(\frac{\phi_0+\gamma}{2}\right)}\right]\\
&=& -mg-2\sigma \sin(\phi_0+\gamma)-4a\sqrt{\sigma \rho g}\cos\left(\frac{\phi_0+\gamma}{2}\right)\sin\phi_0\\
&& -\frac{1}{2}\rho g a^2\sin2\phi_0+\rho g a^2\phi_0\\ 
&=& F_T.
\end{eqnarray*}

Therefore, we obtain the relation
\begin{equation}
-\frac{dE_T}{dh} = F_T.
\end{equation}


%
%

%


\nocite{*}
\bibliography{floating}

\end{document}